\newcommand{\comment}[1]{}
\newcommand{\BEA}{\begin{eqnarray}}
\newcommand{\EEA}{\end{eqnarray}}
\newtheorem{thm}{Theorem}[section]
\newtheorem{lem}[thm]{Lemma}
\newtheorem{defn}[thm]{Definition}
\newtheorem{algm}[thm]{Algorithm}
\newcommand{\PreserveBackslash}[1]{\let\temp=\\#1\let\\=\temp}
\newcolumntype{C}[1]{>{\PreserveBackslash\centering}p{#1}}
\newcolumntype{R}[1]{>{\PreserveBackslash\raggedleft}p{#1}}
\newcolumntype{L}[1]{>{\PreserveBackslash\raggedright}p{#1}}
\begin{document}

\title{Kernel-based methods for Solving Time-Dependent Advection-Diffusion Equations on Manifolds}
\author{ Qile Yan \\
Department of Mathematics, The Pennsylvania State University, University
Park, PA 16802, USA\\
\texttt{qzy42@psu.edu}\\
\And Shixiao Willing Jiang \\
Institute of Mathematical Sciences, ShanghaiTech University, Shanghai
201210, China\\
\texttt{jiangshx@shanghaitech.edu.cn} \\
\And John Harlim \\
Department of Mathematics, Department of Meteorology and Atmospheric
Science, \\
Institute for Computational and Data Sciences \\
The Pennsylvania State University, University Park, PA 16802, USA\\
\texttt{jharlim@psu.edu} }
\date{\today}
\maketitle

\begin{abstract}
In this paper, we extend the class of kernel methods, the so-called diffusion maps (DM) and ghost point diffusion maps (GPDM), to solve the time-dependent advection-diffusion PDE on unknown smooth manifolds without and with boundaries. The core idea is to directly approximate the spatial components of the differential operator on the manifold with a local integral operator and combine it with the standard implicit time difference scheme. When the manifold has a boundary, a simplified version of the GPDM approach is used to overcome the bias of the integral approximation near the boundary. The Monte-Carlo discretization of the integral operator over the point cloud data gives rise to a mesh-free formulation that is natural for randomly distributed  points, even when the manifold is embedded in high-dimensional ambient space. Here, we establish the convergence of the proposed solver on appropriate topologies, depending on the distribution of point cloud data and boundary type. We provide numerical results to validate the convergence results on various examples that involve simple geometry and an unknown manifold. Additionally, we also found positive results in solving the one-dimensional viscous Burger's equation where GPDM is adopted with a pseudo-spectral Galerkin framework to approximate nonlinear advection term.
\end{abstract}

\keywords{Parabolic PDEs on Manifolds \and Local Kernel \and Ghost Point
Diffusion Maps \and Diffusion Maps \and Mesh-free PDE solvers}

{{\bf{Mathematics Subject Classification}} 65M22 $\cdot$ 65M12 $\cdot$ 65C05 $\cdot$ 65C30 	  }

\lhead{Local Kernels for Parabolic PDE's}

\newpage

\section{Introduction}

Solving Partial Differential Equations (PDEs) on manifolds arises naturally in the modeling of physical phenomena, including granular flow \cite{rauter2018finite}, liquid crystal \cite{virga2018variational}, biomembranes \cite{elliott2010modeling}, and computer graphics \cite{bertalmio2001variational}.  In computer graphics applications, the problem of solving PDEs on surfaces has been formulated to restore damaged patterns \cite{macdonald2010implicit}, brain imaging \cite{memoli2004implicit}, among other applications. Techniques for solving PDEs, especially on two-dimensional surfaces, have been proposed. Most of these methods, however, require a parameterization of the surface, which is subsequently used to approximate the tangential derivatives along the surface.

For example, the finite element method (FEM) parameterizes the surface \cite{dziuk2013finite,camacho,bonito2016high} using triangular meshes and solve the Galerkin projected approximation on the finite-element space of functions defined on the triangular meshes. Thus its accuracy relies on the quality of the generated meshes which can be a complicated task if the given point clouds data is randomly distributed. Another class of approaches is to solve the embedded PDE on the ambient space, where the embedding function can be estimated using various techniques, such as the level set representation \cite{bertalmio2001variational} or closest point representation \cite{ruuth2008simple}. Even when the challenges in estimating the embedding function can be overcome, the key issue with this class of approaches is that since the embedded PDE is at least one dimension higher than the dimension of the two-dimensional surface (i.e., co-dimension higher than one), the computational cost may not be feasible if the manifold is embedded in high-dimensional ambient space. Another class of approaches is the mesh-free radial basis function (RBF) method. While several versions of RBF solvers have been proposed \cite{piret2012orthogonal,fuselier2013high} for solving PDE on surfaces, the shape parameter of the RBF can be difficult to tune for high co-dimensional problems as pointed out in \cite{fuselier2013high} and the convergence near the boundary can be problematic.

In this paper, we consider solving a time-dependent advection-diffusion PDE defined on manifolds by approximating the intrinsic tangential (spatial) derivatives using a class of Gaussian integral operators, diffusion maps (DM) algorithm \cite{coifman2006diffusion}, that was designed and introduced as a nonlinear manifold learning. Computationally, this mesh-free algorithm does not require a parameterization and can naturally handle embedded submanifolds of arbitrary co-dimensions and randomly sampled point cloud data. When the manifolds have either no boundary or when the operator only admits homogeneous Neumann boundary condition, the DM approximation (applied with non-symmetric Gaussian kernels \cite{berry2016local}) was shown to be an accurate scheme for solving elliptic PDEs associated with the backward Kolmogorov differential operators with convergence guarantees in the uniform sense \cite{gh2019}. Recently, the Ghost Point Diffusion Maps
(GPDM) \cite{jiang2020ghost} has been introduced to handle other types of boundary conditions, including non-homogeneous Dirichlet, Neumann, and Robin types. The basic idea of GPDM is to remove biases of the integral approximation near the boundary by supplementing the unknown manifold (which is only identified by point clouds) with a set of ghost points on the exterior collar along the boundary. This nontrivial extension was shown to be effective for solving elliptic PDEs with convergence guarantees in a uniform sense.

Building upon this encouraging result, we now consider solving for $u:M\times (0,T] \to \mathbb{R}$ that satisfies a time-dependent advection-diffusion equation,
\begin{eqnarray}
u_{t} &=& a\cdot \nabla _{g}+\Delta _{g}u+f,\quad \quad  (x,t)\in M\times (0,T],
\notag \\
u(\cdot,0) &=& u_{0},\quad\quad \quad \quad\quad\quad\quad \quad\,\,\,\, x\in M,
\label{Eqn:utbf} \\
\beta \partial _{\mathbf{\nu }}u+\alpha u&=& g,\quad\quad \quad\quad\quad\quad\quad \quad\quad (x,t)\in \partial M\times \lbrack 0,T],  \notag
\end{eqnarray}%
In \eqref{Eqn:utbf}, $\Delta _{g}$ is the negative-definite, Laplace-Beltrami operator defined on $M$, the gradient operator $\nabla _{g}$ and the dot product $\cdot $ are defined with respect to the Riemannian metric inherited by $M$ from $\mathbb{R}^{m}$, the vector field $a:M\rightarrow \mathbb{R}^{d}$ is a space-dependent drift along tangent space of $M$. In \eqref{Eqn:utbf}, we use the notation $x\in M \subseteq\mathbb{R}^m$ to denote points on the manifold with ambient coordinate representation (or sometimes called the point cloud) and $t\in \lbrack 0,T]$ to denote the finite time interval. The forcing term $f:M\times \lbrack 0,T]\rightarrow\mathbb{R}$ is a function of $x\in M$ and time $t$ that is independent of the quantity $u$. The second equation in \eqref{Eqn:utbf} is the initial condition where  $u_{0}:M\rightarrow \mathbb{R}$ is the given function. The third equation in (\ref{Eqn:utbf}) is the boundary condition if manifold $M$ has a boundary $\partial M$ (if $M$ has no boundary, then this equation can be ignored). On the boundary $\partial M$, we consider either $\left( \alpha ,\beta \right) =(1,0)$ for Dirichlet boundary condition or $\left( \alpha ,\beta \right) =(0,1)$ for Neumann boundary condition, where $\boldsymbol{\nu }$ is the exterior normal direction and $\partial _{\boldsymbol{\nu }}$ is the normal derivative, and time-independent $g:M\to\mathbb{R}$ denotes possibly non-homogeneous boundary conditions.

The main contribution in this paper is to develop a solver for the parabolic-type problem in \eqref{Eqn:utbf} that combines GPDM (or DM for closed manifolds) as a spatial discretization of the tangential derivatives on the manifolds and the backward difference temporal scheme. In particular, we will provide a convergence study and numerical verifications on various test examples with known and unknown manifolds, including randomly distributed point clouds. In addition, we also provide a numerical method to solve nonlinear viscous Burger's equation on a one-dimensional manifold, where GPDM is adopted with a pseudo-spectral Galerkin formulation to approximate nonlinear advection term.

The paper will be organized as follows. In Section~\ref{section2}, we provide a short review of the approximation of tangential derivatives using local kernel methods. We will discuss the classical diffusion maps for manifolds without boundaries and the ghost point diffusion maps for manifolds with Dirichlet and Neumann boundaries. In Section~\ref{section3}, we present the proposed solvers, combining DM and GPDM with the backward time difference, and provide the convergence analysis. In Section~\ref{section4}, we provide numerical examples. In Section~\ref{section5}, we discuss a GPDM-based pseudo-spectral approach for solving viscous nonlinear Burger's equation. We close the paper with a summary and a list of open problems in Section~\ref{section6}. To improve the readability, we review the detailed algorithm for estimating normal vectors in Appendix~\ref{App:A} and include the longer proofs in several Appendices.

\section{Local kernel approximation of spatial derivatives}\label{section2}

In this section, we first review the relevant results for the local kernel
method from \cite{berry2016local,gh2019,jiang2020ghost} that approximates the
differential operator $\mathcal{L}:=a\cdot \nabla_g+\Delta_g$\ in (\ref{Eqn:utbf}) with a local
integral operator. In a nutshell, the local kernel method is a generalization of
the diffusion maps algorithm \cite{coifman2006diffusion,bh:16vb} to approximate the second-order elliptic operator with a variable-coefficient
diffusion and a non-symmetric drift \cite{berry2016local,gh2019}. Next, we
review the Ghost Point Diffusion Maps method \cite{jiang2020ghost} for approximating the operator on manifolds with boundaries. Here, we will consider a simpler version of GPDM (a version that was used to solve eigenvalue problem in \cite{jiang2020ghost}) and present the consistency error bound.
\subsection{Review of local kernel theory}

\label{basicDMtheory}

Based on local kernel theory \cite{berry2016local,gh2019}, one can
approximate the Kolmogorov Backward (KB) operator,
\begin{equation}
\mathcal{L}_{\text{KB}}u:=a\cdot \nabla _{g}u+\frac{1}{2}c^{ij}\nabla
_{i}\nabla _{j}u,  \label{Eqn:L3}
\end{equation}%
where $\nabla _{i}$ is the covariant derivative in the $i$th direction, $%
\nabla _{i}\nabla _{j}$ are the components of the Hessian operator, $a:%
M\to\mathbb{R}^d$ is the same advection term as in %
\eqref{Eqn:utbf}, and the symmetric positive definite diffusion tensor $%
c:M\rightarrow \mathbb{R}^{d}\times \mathbb{R}^{d}$ is a $d\times d$\
diffusion matrix. One can define the prototypical local kernel \cite%
{berry2016local}, $K_{\epsilon }:\mathbb{R}^{m}\times \mathbb{R}%
^{m}\rightarrow \mathbb{R}$, as,
\begin{equation}
K_{\epsilon }(x,y):=\exp \left( -\frac{(x+\epsilon A(x)-y)^{\top
}C(x)^{-1}(x+\epsilon A(x)-y)}{2\epsilon }\right),
\label{prototypicalkernel}
\end{equation}%
where $\epsilon >0$ characterizes the kernel bandwidth. In %
\eqref{prototypicalkernel}, $A:M\rightarrow \mathbb{R}^{m}$ and $%
C:M\rightarrow \mathbb{R}^{m}\times \mathbb{R}^{m}$ are related to $a$ and $%
c $, respectively, through {\color{black}a local parameterization $%
\iota:U\subseteq \mathbb{R}^d\rightarrow M\subseteq\mathbb{R}^m$ of the
manifold $M$} as follows:
\begin{equation}
A\left( x\right) =D\iota \left( x\right) a\left( x\right) ,\text{ \ \ }%
C(x)^{-1}=\left( D\iota \left( x\right) c\left( x\right) D\iota \left(
x\right) ^{{\top }}\right) ^{\dag }.  \label{Eqn:BC}
\end{equation}%
{\color{black}Here, the set $U\subseteq \mathbb{R}^d$ denotes a domain that
contains $\iota^{-1}(x)$.} The notation $^{\dag }$\ denotes the
pseudo-inverse and the {\color{black}differential map $D\iota \left(
x\right): T_{\iota^{-1}(x)}M \subseteq \mathbb{R}^{d} \to T_x\mathbb{R}^{m}
\subseteq \mathbb{R}^{m}$ is an $m\times d$ matrix that is usually known as
the Jacobian matrix (or pushforward) corresponding to the map $\iota$.}

For all $x\in M$ whose distance from the boundary is larger than $\epsilon
^{r}$, where $0<r<1/2$, define the integral operator,
\begin{equation}
G_{\epsilon }u(x):=\epsilon ^{-d/2}\int_{M}K_{\epsilon }\left( x,y\right)
u(y)dV_{y}=\epsilon ^{-d/2}\int_{M_{\epsilon ,x}}K_{\epsilon }\left(
x,y\right) u(y)dV_{y}+O(\epsilon ^{2}),  \label{integralop1}
\end{equation}%
which is effectively a local integral operator over the $\epsilon ^{r}$-ball
around $x$, $M_{\epsilon ,x}:=\{y\in M,|y-x|\leq\epsilon ^{r}\}$. The main
result from \cite{berry2016local,gh2019,jiang2020ghost} is that for any $%
u\in C^{3}(M)$, the asymptotic expansion,
\begin{equation}
G_{\epsilon }u\left( x\right) =m\left( x\right) u\left( x\right) +\epsilon
\left( \omega \left( x\right) u\left( x\right) + m(x)\mathcal{L}_{\text{KB}%
}u\left( x\right) \right) +O\left( \epsilon ^{2}\right) ,  \label{Eqn:Geu}
\end{equation}%
holds for points $x$ whose distance from the boundary is at least of order-$%
\epsilon ^{r}$ with $0<r<1/2$ or for closed manifolds. Here, $\omega \left(
x\right) $ depends on the geometry of the manifold and $m\left( x\right)
=\left( 2\pi \right) ^{d/2}\det \left( C\left( x\right) \right) ^{1/2}$
can be approximated by $G_{\epsilon }1(x)=m(x)+O(\epsilon )$.

It is clear that the advection-diffusion operator,
\BEA
\mathcal{L}:=a\cdot \nabla_g+\Delta_g\label{Eqn:oper3}
\EEA
in \eqref{Eqn:utbf} is a
special case of \eqref{Eqn:L3} with $c_{ij} = 2g^{ij}$, where $g^{ij}$
denotes the $(i,j)$-component of the inverse of the metric tensor $g_{ij}$.
Based on this observation, all the results above are valid by simply
changing $C(x)^{-1} = \frac{1}{2}\mathbf{I}_{m\times m}$ in the local kernel %
\eqref{prototypicalkernel}. For the convenience of the discussion below, we
will abuse the notation and refer to,
\begin{equation}
K_{\epsilon }(x,y):=\exp \left( -\frac{\|x+\epsilon A(x)-y\|^{2}}{4\epsilon }%
\right),  \label{Eqn:Kexy}
\end{equation}
as the local kernel corresponding to advection-diffusion operator in %
\eqref{Eqn:oper3} instead of to Kolmogorov Backward operator in %
\eqref{Eqn:L3}. In \eqref{Eqn:Kexy} and the rest of this paper, the notation
$\|\cdot\|$ denotes the standard Euclidean norm.

In the advection-diffusion case, $m(x)$ is independent of $x$ since $\det
\left( C\left( x\right) \right) ^{1/2}$ is constant, and we will denote it as $%
m_{0}$ in the rest of this paper. While one should use the normalized graph
Laplacian formulation to remove the constant scaling factor $m_{0}$ in the
numerics, for the convenience of the convergence analysis in the rest of
this paper, we consider the un-normalized Graph Laplacian formulation (see
e.g.~\cite{von2008consistency,vaughn2019diffusion}) . Particularly, from the
integral operator in \eqref{Eqn:Geu} with the local kernel in %
\eqref{Eqn:Kexy}, we can access the operator $\mathcal{L}$ in %
\eqref{Eqn:oper3} through the following algebraic expression,
\begin{equation*}
L_{\epsilon }u(x):=\frac{1}{m_{0}\epsilon }\Big(G_{\epsilon
}u(x)-u(x)G_{\epsilon }1(x)\Big)=\mathcal{L}u(x)+O(\epsilon ).
\end{equation*}

Numerically, we discretize this integral equation as follows. Given point cloud data $\{x_{i}\}_{i=1,\ldots ,N}$, we define $\mathbf{K}\in \mathbb{R%
}^{N\times N}$ to be the matrix whose components are $\mathbf{K}%
_{ij}=K_{\epsilon }(x_{i},x_{j})$. Subsequently, let $\mathbf{D}$ be a
diagonal matrix with diagonal components consists of the components of $%
\mathbf{K}\mathbf{1}$. Then, the discrete estimator is denoted by,
\begin{equation}
\mathbf{L}:=\frac{\epsilon ^{-d/2-1}}{m_{0}N}\big(\mathbf{K}-\mathbf{D}\big).
\label{classicDMmatrix}
\end{equation}

We should point out that in the discussion above, we only present the
formulation for uniformly distributed data. With a slight modification, as
discussed in \cite{gh2019}, one can achieve a consistent estimator even if
the data is not sampled uniformly. Let $\mathbf{u}_{M}= (u(x_{1}),\ldots
,u(x_{N}))$ be a column vector whose components are the function values of $u$
on the given points on manifold $\{x_{i}\}_{i=1,\ldots ,N} \subseteq M$. Then, one has
the pointwise error estimate for this operator approximation as stated in
Ref. \cite{berry2016local,gh2019}:

\begin{lem}
\label{thm_LK} (Pointwise consistency) Let $x_{i}\in M\subset \mathbb{R}^{m}$
for $i=1,\ldots ,N$ be i.i.d. uniform samples with respect to the volume
form inherited by the d-dimensional manifold $M\subseteq\mathbb{R}^{m}$. For
any $u\in C^{3}\left( {M}\right) \cap L^{2}\left( {M}\right)$, and for all
points $x_{i}\in M$ when $M$ has no boundary or for points $x_{i}\in M$\
away from the boundary with distance of at least $O(\epsilon ^{r})$
with $0<r<1/2$ when M has boundary,
\begin{equation}
\left\vert (\mathbf{L}\mathbf{u}_{M})_{i}-\mathcal{L}u(x_{i})\right\vert =%
O\left( \epsilon,N^{-1/2}\epsilon ^{-(1/2+d/4)}\right) ,
\label{Eqn:cbc}
\end{equation}%
in high probability, as $\epsilon\to 0$ after $N\to\infty$. Here, $\mathbf{u}%
_M:=(u(x_1),\ldots,u(x_N)) \in \mathbb{R}^N$ is the column vector
corresponding to the given function $u$ at $\{x_i\}_{i=1}^N$.
\end{lem}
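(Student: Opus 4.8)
The plan is to decompose the pointwise error at a sample point $x_i$ into a deterministic \emph{bias} coming from the continuous integral approximation and a stochastic \emph{variance} coming from its Monte-Carlo discretization,
\begin{equation*}
\left|(\mathbf{L}\mathbf{u}_M)_i-\mathcal{L}u(x_i)\right|\le \left|L_\epsilon u(x_i)-\mathcal{L}u(x_i)\right|+\left|(\mathbf{L}\mathbf{u}_M)_i-L_\epsilon u(x_i)\right|,
\end{equation*}
where $L_\epsilon u(x):=\tfrac1{m_0\epsilon}\left(G_\epsilon u(x)-u(x)G_\epsilon 1(x)\right)$ is the continuous operator introduced above, and then to control the first term by $O(\epsilon)$ and the second by $O(N^{-1/2}\epsilon^{-(1/2+d/4)})$.

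For the bias, I would use that $x_i$ is either a point of a closed manifold or lies at distance at least $O(\epsilon^r)$, $0<r<1/2$, from $\partial M$, so that by the localization in \eqref{integralop1} the value $G_\epsilon u(x_i)$ does not feel the boundary and the asymptotic expansion \eqref{Eqn:Geu} applies. Applying \eqref{Eqn:Geu} both to $u$ and to the constant function $1$ (for which $\mathcal{L}1=0$), and using that $m(x)\equiv m_0$ is constant in the advection-diffusion case, the leading $O(1)$ and the $O(\epsilon)$ geometric terms cancel in the difference, leaving
\begin{equation*}
G_\epsilon u(x_i)-u(x_i)\,G_\epsilon 1(x_i)=\epsilon\, m_0\,\mathcal{L}u(x_i)+O(\epsilon^2),
\end{equation*}
so that $L_\epsilon u(x_i)=\mathcal{L}u(x_i)+O(\epsilon)$, with the implied constant depending on the $C^3$ norm of $u$ and on the geometry of $M$.

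For the variance, I would write $(\mathbf{K}\mathbf{u}_M)_i-u(x_i)(\mathbf{K}\mathbf{1})_i=\sum_{j=1}^N Z_j$ with $Z_j:=K_\epsilon(x_i,x_j)\left(u(x_j)-u(x_i)\right)$, and note that the $j=i$ term vanishes identically, so conditionally on $x_i$ the remaining summands are i.i.d.\ with $\mathbb{E}[Z_j\mid x_i]$ proportional to $G_\epsilon u(x_i)-u(x_i)G_\epsilon 1(x_i)$; hence, up to the (constant) volume normalization of the sampling density, $(\mathbf{L}\mathbf{u}_M)_i$ is an unbiased estimator of $L_\epsilon u(x_i)$ and no further discretization bias arises. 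The key quantitative step is the variance estimate: $K_\epsilon(x_i,\cdot)^2$ is a Gaussian of width $O(\sqrt\epsilon)$, so $\int_M K_\epsilon(x_i,y)^2\,dV_y=O(\epsilon^{d/2})$, and since $u$ is Lipschitz (being $C^3$) the factor $\left(u(y)-u(x_i)\right)^2=O(\|y-x_i\|^2)$ contributes an extra $O(\epsilon)$ over the effective support, so $\Var(Z_j\mid x_i)\le\mathbb{E}[Z_j^2\mid x_i]=O(\epsilon^{1+d/2})$. Then $\Var\!\left(\tfrac1N\sum_j Z_j\mid x_i\right)=O(\epsilon^{1+d/2}/N)$, and multiplying by the prefactor $\epsilon^{-d/2-1}/m_0$ from \eqref{classicDMmatrix} turns the typical fluctuation into $O\!\left(\epsilon^{-d/2-1}\cdot N^{-1/2}\epsilon^{(1+d/2)/2}\right)=O\!\left(N^{-1/2}\epsilon^{-(1/2+d/4)}\right)$. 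Using in addition the crude bound $|Z_j|\le 2\|u\|_{\infty}$, I would apply Bernstein's inequality to get this fluctuation bound in high probability for a fixed $x_i$, and then a union bound over $i=1,\dots,N$ (which costs only a harmless $\sqrt{\log N}$ factor) to obtain it simultaneously at all sample points in the limit $N\to\infty$ followed by $\epsilon\to0$. Adding the bias and variance contributions yields \eqref{Eqn:cbc}.

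The argument is structurally standard — it is essentially the pointwise consistency statement of \cite{berry2016local,gh2019} — so the main obstacle is really the variance bookkeeping: one has to see that the difference form $G_\epsilon u-uG_\epsilon 1$, equivalently the cancellation $u(x_j)-u(x_i)$, is exactly what buys the extra power of $\epsilon$ that improves a naive $O(N^{-1/2}\epsilon^{-(1+d/4)})$ estimate to the stated $O(N^{-1/2}\epsilon^{-(1/2+d/4)})$ rate, and — if the bound is wanted uniformly over all $N$ sample points rather than just pointwise — one must use a concentration inequality whose tail is strong enough to survive the union bound over the $N$ points.
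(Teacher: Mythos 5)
Your proposal is correct: the paper states this lemma without proof, citing \cite{berry2016local,gh2019}, and your bias--variance decomposition (Taylor expansion \eqref{Eqn:Geu} applied to $u$ and to $1$ for the $O(\epsilon)$ bias, plus a conditional variance bound $\mathbb{E}[Z_j^2\mid x_i]=O(\epsilon^{1+d/2})$ exploiting the cancellation $u(x_j)-u(x_i)$, followed by Bernstein and a union bound) is exactly the standard argument underlying those references. The exponent bookkeeping $\epsilon^{-d/2-1}\cdot N^{-1/2}\epsilon^{1/2+d/4}=N^{-1/2}\epsilon^{-(1/2+d/4)}$ checks out, so there is nothing to add.
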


Throughout the paper, we use the notation $O(f,g)$ as a shorthand
for $O(f)+O(g)$ as $f,g \to 0$. The second error bound
in \eqref{Eqn:cbc} above corresponds to the discrete estimate of the
integral operator $L_\epsilon$ for fixed $\epsilon>0$ so it is defined as $%
N\to\infty$. Effectively, this means $\epsilon \to 0$ after $N\to \infty$ or
$\epsilon$ decays to zero as $N$ increases (which can be observed by
balancing these error terms).

\textbf{Practical Implementation:} To evaluate the kernel in \eqref{Eqn:Kexy}%
, we assume that $A:M\subseteq\mathbb{R}^m \to\mathbb{R}^m$ is available so
we can evaluate this function on any point cloud $x\in M\subseteq\mathbb{R}%
^m $. In Section~\ref{cownumericalsection}, we will discuss one way to
specify $A$ when it is not available. For accurate estimation, one also has
to specify the appropriate bandwidth parameter, $\epsilon$. In our
implementation, we also use $k-$nearest neighbor algorithm to avoid
computing the distances of pair of points that are sufficiently large.

Our choice of $\epsilon$ follows the method that was originally proposed in
\cite{coifman2008TuningEpsilon}. Basically, the idea relies on the following
observation,
\begin{eqnarray}
S(\epsilon):=\frac{1}{Vol(M)^2}\int_M \int_{T_xM} K_\epsilon(x,y) dy\,dV(x)
= \frac{1}{Vol(M)^2}\int_M (4\pi\epsilon)^{d/2} dV(x) = \frac{%
(4\pi\epsilon)^{d/2}}{Vol(M)}.  \label{scalingS}
\end{eqnarray}
Since $S$ can be approximated by a Monte-Carlo integral, for a fixed $k$, we
approximate,
\begin{eqnarray}
S(\epsilon) \approx \frac{1}{Nk}\sum_{i,j=1}^{N,k} \exp\Big(-\frac{%
\|x_i-x_j\|^2}{4\epsilon}\Big),  \notag
\end{eqnarray}
where $\{x_j\}_{j=1}^k$ is the $k$-nearest neighbors (knn) of each $x_i$. We
choose $\epsilon$ from a domain (e.g., $[2^{-14},10]$ in our numerical
implementation) such that $\frac{d\log(S)}{d\log\epsilon} \approx \frac{d}{2}
$. Numerically, we found that the maximum slope of $\log(S)$ often coincides
with $d/2$, which allows one to use the maximum value as an estimate for the
intrinsic dimension $d$ when it is not available, and then choose the
corresponding $\epsilon$. In our implementation, we will report the specific
choices of the knn parameter $k$ and the corresponding automated tuned
bandwidth parameter $\epsilon$. We should also point out that this bandwidth
tuning may not necessarily give the most accurate result (as noted in \cite%
{bh:16vb}), however, it gives a useful reference value for further tuning.
In some example, we will indeed use this automated tuned $\epsilon$ as a reference value for a particular sample size, $N$, and set the bandwidth parameters for different sample sizes, $N$, to respect the scaling
from the error estimate.


\textbf{The $\ell ^{2}$-topology:} For manifold with boundary,
unfortunately, we may not have point-wise consistency as in \eqref{Eqn:cbc}.
To facilitate the analysis, we will consider a weaker topology, that is, $%
\ell ^{2}:=(\mathbb{R}^{N},\Vert \cdot \Vert _{\ell ^{2}})$, defined with
the following norm,
\begin{equation}
\Vert \mathbf{v}\Vert _{\ell ^{2}}=\left( \frac{1}{N}\sum_{j=1}^{N}v_{j}^{2}%
\right) ^{1/2},  \label{Eqn:l2norm}
\end{equation}%
for any vector $\mathbf{v}=(v_{1},\ldots ,v_{N})^{\top }\in \mathbb{R}^{N}$.
We will also use the notation,
\begin{equation}
\langle \mathbf{u},\mathbf{v}\rangle _{\ell ^{2}}=\frac{1}{N}\mathbf{u}%
^{\top }\mathbf{v}=\frac{1}{N}\sum_{j=1}^{N}u_{j}v_{j},  \label{Eqn:inner}
\end{equation}%
to denote the corresponding inner-product for any $\mathbf{u},\mathbf{v}\in
\mathbb{R}^{N}$. Effectively, the $\ell ^{2}$ topology is the standard
Euclidean metric scaled by $1/N$, for any vector in $\mathbb{R}^{N}$. That
is, $\Vert \cdot \Vert _{\ell ^{2}}=\frac{1}{\sqrt{N}}\Vert \cdot \Vert $
and $\langle \cdot ,\cdot \rangle _{\ell ^{2}}=\frac{1}{N}\langle \cdot
,\cdot \rangle $.

\comment{
Under the same hypothesis of the Lemma~\ref{thm_LK}, it is also immediately
clear that
\begin{equation}
\Vert \mathbf{L}\mathbf{u}_{M}-\mathcal{L}u\Vert _{\ell ^{2}}=O\left(
\epsilon ,N^{-1/2}\epsilon ^{-(1/2+d/4)}\right) ,  \label{Eqn:l2_nob}
\end{equation}%
in high probability, as $\epsilon \rightarrow 0$ after $N\rightarrow \infty $%
.}

In the remainder of this paper, we will also use the notation $\|\cdot\|_{\infty}$ to denote the uniform (or maximum) norms for matrices and vectors.

\subsection{Review of the Ghost Point Diffusion Maps approach}

As mentioned in various Refs.
\cite{coifman2006diffusion,berry2016local,harlim2018,gh2019,jiang2020ghost},
the asymptotic expansion (\ref{Eqn:cbc}) is not valid near the boundary of
the manifold. One way to overcome this issue is with the ghost point approach introduced in \cite%
{jiang2020ghost}. This approach, called the Ghost Point Diffusion Maps (GPDM), extends the classical ghost point
method \cite{leveque2007finite} to solving elliptic PDEs on unknown manifolds with boundary.


The Ghost Point Diffusion Maps (GPDM), introduced in \cite{jiang2020ghost},
specifies the ghost points as data points that lie on the exterior normal
collar, $\Delta M$, along the boundary. For two-dimensional manifolds, it
was shown that under appropriate conditions, the extended set $M\cup \Delta
M $, can be isometrically embedded with an embedding function that is
consistent with the embedding $M\hookrightarrow \mathbb{R}^{m}$ when
restricted on $M$ (see Lemma~3.5 in \cite{jiang2020ghost}). For each
boundary point $x_{b}\in \partial M\subset \mathbb{R}^{m}$ with normal
vector $\boldsymbol{\nu }\in \mathbb{R}^{m}$, the embedding of $\Delta
M\hookrightarrow \mathbb{R}^{m}$ gives rise to the ghost points specified
as,
\begin{equation}
x_{b,k}:=x_{b}+kh\boldsymbol{\nu },\quad k=1,\ldots ,K,
\label{idealghostpoints}
\end{equation}%
for each $b=1,\ldots ,B$, where $B$ denotes the total number of given boundary
points, $K$ denotes the total number of ghost layers for each boundary point, and
the parameter $h$ denotes the distance between consecutive ghost points (see
Fig.~\ref{fig1dsketch}(a) for a geometric illustration). Practically, we
choose $h$ and ${K}$ such that $h{K}=O(\epsilon ^{r})$ and
all interior points whose distance are within $\epsilon ^{r}$ from the
boundary $\partial M$ are at least $\epsilon ^{r}$ away from the boundary of
the extended manifold $M\cup \Delta M$. In addition, we will also specify
interior ghost points,
\begin{equation*}
x_{b,0}:=x_{b}-h\boldsymbol{\nu },
\end{equation*}%
which will be used to estimate the normal derivatives of the boundary
conditions and estimate the function values on the exterior ghost points, $%
\{x_{b,k}\}_{b,k=1}^{{B},{K}}$.

\textbf{Estimation of normal vectors:} Since the normal vector $\boldsymbol{%
\nu}$ is unknown, it will be numerically estimated. For instance, when the
data is well-sampled, where by well-sampled we mean the data points are well-ordered along intrinsic coordinates, one can identify $\tilde{\boldsymbol{\nu}}$ as the
tangent line approximation to $\boldsymbol{\nu}$ (see Fig.~\ref{fig1dsketch}
for a geometric illustration). In this case, the parameter scales as $h=%
O(\epsilon^{1/d})$ and the error is $|\boldsymbol{\nu}-\boldsymbol{\tilde{\nu}}|=O(h)$ (see Proposition 3.1 in \cite{jiang2020ghost}). For randomly sampled data, one can also use
the kernel method to estimate $\boldsymbol{\tilde{\nu}}$ and the
error is $|\boldsymbol{\nu}-\boldsymbol{\tilde{\nu}}|=O(\sqrt{%
\epsilon})$ (see Appendix~\ref{App:A} for the detailed
discussion). The parameter $%
h $ can be estimated by the mean distance from $x_b$ to its $P$ (around 10 in simulations) nearest neighbors.
Given the estimated normal vector, $\boldsymbol{\tilde{\nu}}$, the
approximate ghost points are given by,
\begin{eqnarray}
\tilde{x}_{b,k} = x_b + kh\boldsymbol{\tilde{\nu}}, \quad k=1,\ldots,{K} .
\label{approxghostpoints}
\end{eqnarray}
We should point out that from \eqref{idealghostpoints} and \eqref{approxghostpoints}, we have $|%
\tilde{x}_{b,k}-x_{b,k}|=O(h^2)$ for well sampled data with the tangent line method and have $|%
\tilde{x}_{b,k}-x_{b,k}|=O(h\sqrt{\epsilon})$ for randomly sampled data with the kernel method,  for one and two-dimensional manifolds (see \cite{jiang2020ghost}).

As shown in Fig.~\ref{fig1dsketch}(b), when tangent line is used, the
interior estimated ghost point, $\tilde{x}_{b,0}=x_b - h\boldsymbol{\tilde{%
\nu}}$, coincides with one of interior points. However, for randomly sampled
data, the estimated interior ghost points will not necessarily coincide with an
interior point (see Fig.~\ref{fig1dsketch}(c)). In such case, if we
define $\gamma_b:[0,h]\to M$ as the arc-length geodesic parameterization
with based point $\gamma_b(0)=x_b$ and unit tangent vector $\gamma_b^{\prime
}(0) = -\boldsymbol{\nu}$, then $x_{b,0}:=x_b-h\boldsymbol{\nu}%
=\gamma_b(0)-h\gamma_b^{\prime }(0)$ and the distance between $%
\gamma_b(h):=\exp_{x_b}(-h\boldsymbol{\nu})$ and $x_{b,0}$ is of order-$h^2$, which is usually smaller
than $h\sqrt{\epsilon}$ in random sampled case. Then, it is clear that when the data is random, the distance between the
estimated ghost point $\tilde{x}_{b,0}$ and the manifold is given by,
\BEA
|\tilde{x}_{b,0} - \gamma_b(h)|\leq |\tilde{x}_{b,0}-x_{b,0}| +
|x_{b,0}-\gamma_b(h)| = O(h\sqrt{\epsilon}). \label{eqngamtid}
\EEA
In the discussion
below, we will show how this nonzero distance affects the consistency error in the case of randomly sampled data.


\begin{figure}[tbp]
\centering
\begin{tabular}{ccc}
(a) ideal construction & (b) secant line approximation & (c) kernel-based
approximation \\
\includegraphics[width=2in]{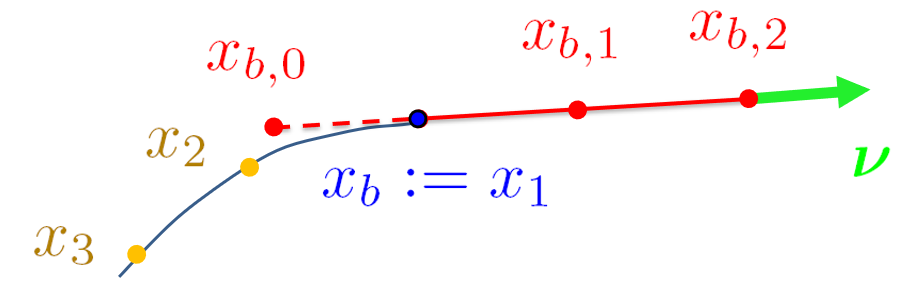} & %
\includegraphics[width=2in]{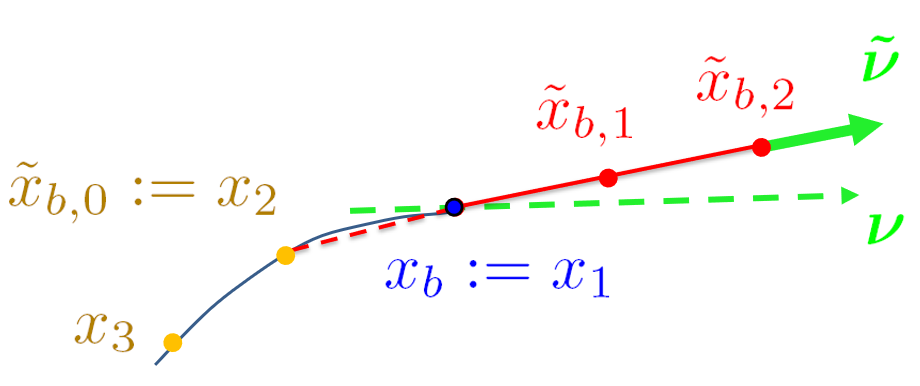} & %
\includegraphics[width=2in]{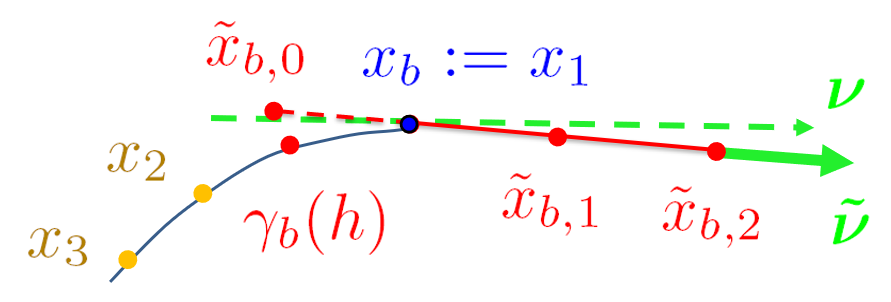}%
\end{tabular}%
\caption{Geometric illustration for ghost point extension: (a) Ideal
construction: ghost point extension for $x_{b,0},x_{b,1},x_{b,2}$ along true
$\boldsymbol{\protect\nu}$. Here, $x_2$ and $x_3$ are points on the manifold
$M$, $x_b:=x_1$ is a point on the boundary $\partial M$. (b) Secant line
extension for ghost points $\tilde{x}_{b,1}, \tilde{x}_{b,2}$ along the
estimated $\boldsymbol{\tilde{\protect\nu}}$. Here, $\boldsymbol{\tilde{%
\protect\nu}}$ is along the secant line connecting $x_2$ and $x_1$. (c)
Kernel-based approximation. Notice that $x_{b,0}$ does not coincide with $%
x_2 $. In fact, its distance from $\protect\gamma_b(h):=\exp_{x_b}(-h%
\boldsymbol{\protect\nu})$, a projected point on the manifold $M$, is of
order $h\protect\sqrt{\protect\epsilon}$. }
\label{fig1dsketch}
\end{figure}

\textbf{Estimation of function values on the ghost points:} The main goal
here is to estimate the function values $\{u(x_{b,k})\}_{b,k=1}^{{B},{K}}$,
given the
estimated ghost points $\{\tilde{x}_{b,k}\}_{b,k=1,0}^{{B},{K}}$, function values $u(x_{i})$ on manifold $x_{i}\in M$, and function
values $u(\tilde{x}_{b,0})$ on the estimated interior ghosts, where both
$\{u(x_{i}),u(\tilde{x}_{b,0})\}$ will be eventually approximated from solving a
system of linear algebraic equations in our PDE applications. We stress that
the function values $\{u(\tilde{x}_{b,0})\}_{b=1}^{B}$ here is given exactly
like the  $u(x_{i})$ for any $x_{i}\in M$, even when the ghost
points $\tilde{x}_{b,0}$ do not lie on the manifold $M$. For the discussion
below, we assume that we are given the components of the column vector,
\begin{equation}
\mathbf{u}_{M}:=(u(x_{1}),\ldots ,u(\tilde{x}_{b,0}),\ldots ,u(x_{N}))\in
\mathbb{R}^{N},  \label{umh}
\end{equation}%
and we will use them to estimate the components of function values on ghost
points,%
\begin{equation}
\mathbf{u}_{G}:=(u(x_{1,1}),\ldots ,u(x_{{B},{K}}))\in \mathbb{R}^{{B}{K}}.
\label{ug}
\end{equation}%
We denote the corresponding estimates as components of the column vector,
\begin{equation}
\mathbf{U}_{G}:=(U_{1,1},\ldots ,U_{{B},{K}})\in \mathbb{R}^{{B}{K}}.
\label{ugh}
\end{equation}%
For the convenience of discussion below, we also define the column vector,
\begin{equation}
\mathbf{\tilde{u}}_{G}:=(u(\tilde{x}_{1,1}),\ldots ,u(\tilde{x}_{{B},{K}%
}))\in \mathbb{R}^{{B}{K}}.  \label{eqn:util}
\end{equation}%
Numerically, we will obtain the components of $\mathbf{U}_{G}$ by solving
the following linear algebraic equations for each $b=1,\ldots ,{B}$,
\begin{equation}
\begin{aligned} U_{b,1} - 2u(x_{b})+ u(\tilde{x}_{b,0}) &= 0,\\
U_{b,2}-2U_{b ,1}+u(x_{b})&=0, \\ U_{b ,k}-2U_{b,k-1}+U_{b ,k-2}&=0,
\quad\quad k = 3,\ldots {K}. \\ \end{aligned}  \label{Eqn:uvv_g2}
\end{equation}%
These algebraic equations are discrete analogs of matching the first-order
derivatives along the estimated normal direction, $\tilde{\boldsymbol{\nu }}$%
.

To understand the error induced by the solution of \eqref{Eqn:uvv_g2}, we
define the following set.

\begin{defn}
\label{epsball} $B_{\epsilon^r}(\partial M) := \cup_{x\in\partial M}
B_{\epsilon^r}(x)$, where $B_{\epsilon^r}(x) = \{y\in\mathbb{R}^m:|y-x|\leq
\epsilon^r\}$ is an $\epsilon^r$-ball in $\mathbb{R}^m$.
\end{defn}

Note that since $\gamma _{b}(h),x_{b,k},\tilde{x}_{b,k}\in B_{\epsilon
^{r}}(\partial M)$, one can estimate the function values at these points
under the appropriate regularity assumption on this set. For example,
assuming that $u\in C^{1}(B_{\epsilon ^{r}}(\partial M))$, one can deduce
that,
\begin{equation}
|(\mathbf{U}_{G}-\mathbf{u}_{G})_{b,k}|\leq |(\mathbf{U}_{G}-\mathbf{\tilde{u%
}}_{G})_{b,k}|+|(\mathbf{\tilde{u}}_{G}-\mathbf{u}_{G})_{b,k}|=O%
(h^{2},h\sqrt{\epsilon }),  \label{ugerror}
\end{equation}%
where the column vectors are defined above in (\ref{ug})-(\ref{eqn:util}).
Here, the first error rate of order-$h^{2}$, which is valid for the
well-sampled data, can be obtained by the Taylor expansion along $\tilde{%
\boldsymbol{\nu }}$. The second error rate of order-$h\sqrt{\epsilon } $,
which is valid for randomly sampled data, can be obtained by the Taylor
expansion along a straight path of distance $h\sqrt{\epsilon }$, connecting $%
x_{b,k}$ and $\tilde{x}_{b,k}$, for every $b$ and $k$. It is worthwhile to
note that the first error bound of rate-$h^{2}$ is one-order less accurate
compared to that in \cite{jiang2020ghost}, which uses a set of more
complicated extrapolation equations (relative to \eqref{Eqn:uvv_g2}) that is
only appropriate for solving elliptic problems. The second error rate-$h%
\sqrt{\epsilon }$ will be comparable to the dominating error rate for
randomly sampled data (see the rates reported in Proposition~3.6 in \cite%
{jiang2020ghost}), where $h=O(\epsilon )$ in elliptic problems.

\textbf{The GPDM estimator:} In the following discussion, we will define the
Ghost Point Diffusion Maps (GPDM) estimator for the differential operator $%
\mathcal{L}$ in \eqref{Eqn:oper3}. Since the discrete estimator will be constructed based on the
available training data $\{x_i\in M\}_{i=1}^N$ and the estimated ghost
points, $\{\tilde{x}_{b,k}\}_{b,k=1,0}^{{B},{K}}$, we will need a few more
notations to demonstrate its consistency for the differential operator
defined on functions that takes values on the extended $M\cup \Delta M$.

In particular, since the interior ghost points, $\{\tilde{x}_{b,0}\}_{b=1}^{{%
B}}$ may or may not coincide with any interior points on the manifold,
in the following discussion, we assume that
\BEA
X^{h}  :=\{x_{1},\ldots ,\tilde{x}%
_{b,0},\ldots ,x_{N}\}, \label{eqnXh} 
\EEA
has $N$ components that include the estimated ghost
points. With this notation, we define a non-square matrix,
\begin{equation}
\mathbf{L}^{h}:=(\mathbf{L}^{(1)},\mathbf{L}^{(2)})\in \mathbb{R}^{N\times
\bar{N}},  \label{eqnLtild}
\end{equation}%
constructed as in \eqref{classicDMmatrix} with $\bar{N}=N+BK$, by evaluating
the kernel on components of $X^{h}$ for each row and the components of $%
X^{h}\cup \{\tilde{x}_{b,k}\}_{b,k=1}^{{B},{K}}$ for each column. Here, $%
\mathbf{L}^{(1)}\in \mathbb{R}^{N\times N}$ corresponds to the standard
construction in \eqref{classicDMmatrix} over the components of $X^{h}$. For
the discussion below, we also define the matrix $\mathbf{L}\in \mathbb{R}%
^{N\times \bar{N}}$, constructed in analogous to $\mathbf{L}^{h}$, except
that the kernel is evaluated on the components of $X$ for each row and the
components of $X\cup \{x_{b,k}\}_{b,k=1}^{{B},{K}}$ for each column, where
all elements of
\BEA
X :=\{x_{1},\ldots ,\gamma _{b}(h),\ldots ,x_{N}\}, \label{eqnX1} 
\EEA
 lie on
the manifold $M$, with $\gamma _{b}(h)\in M$ replacing each estimated
interior ghost point $\tilde{x}_{b,0}\in X^{h}$. One can show that $\mathbf{L%
}^{h}=\mathbf{L}+O(h^{2}\epsilon ^{-3/2})$ (see Lemma~A.2 in \cite%
{jiang2020ghost}).

The GPDM estimator is defined as an $N\times N$ matrix,
\begin{equation}
\mathbf{\tilde{L}}:=\mathbf{L}^{(1)}+\mathbf{L}^{(2)}\mathbf{G},
\label{GPDMmatrix}
\end{equation}%
where $\mathbf{G}\in \mathbb{R}^{{B}{K}\times N}$ is defined as a solution
operator to \eqref{Eqn:uvv_g2} which in compact form is given as, $\mathbf{U}%
_{G}=\mathbf{G}\mathbf{u}_{M}$. we also define column vectors%
\begin{eqnarray}
{\mathbf{u}_{M}^{\gamma }} &:=&(u(x_{1}),\ldots ,u(\gamma _{b}(h)),\ldots
,u(x_{N}))\in \mathbb{R}^{N},  \label{uM} \\
{\mathbf{u}} &:=&({\mathbf{u}}_{M}^{\gamma },{\mathbf{u}}_{G})\in \mathbb{R}%
^{\bar{N}},  \label{u} \\
{\mathbf{U}} &:=&({\mathbf{u}}_{M},{\mathbf{U}}_{G})\in \mathbb{R}^{\bar{N}},
\label{uh}
\end{eqnarray}
With these definitions and those in (\ref{umh})-(\ref{eqn:util}), we note
that
\begin{equation*}
\mathbf{L}^{h}\mathbf{U}=\mathbf{L}^{(1)}\mathbf{u}_{M}+\mathbf{L}^{(2)}%
\mathbf{U}_{G}=(\mathbf{L}^{(1)}+\mathbf{L}^{(2)}\mathbf{G})\mathbf{u}_{M}=%
\mathbf{\tilde{L}}\mathbf{u}_{M},
\end{equation*}%
but $\mathbf{L}^{h}\mathbf{u=L}^{(1)}\mathbf{u}_{M}^{\gamma }+\mathbf{L}%
^{(2)}\mathbf{u}_{G}\neq \mathbf{\tilde{L}}\mathbf{u}_{M}^{\gamma }$. With
all these notation above, we can show that:

\begin{lem}
\label{theorem1} (Consistency of the GPDM) Let $u\in C^{3}\left( M\cup
B_{\epsilon ^{r}}(\partial M)\right) $, where the extended manifold $M\cup
\Delta M$, a smooth submanifold of $\mathbb{R}^{n}$, is constructed such
that all $x\in M$ has distance of at least of $O(\epsilon ^{r})$%
, where $0<r<1/2$, from the boundary of the extended manifold, $M\cup \Delta
M$. For each $x_{i}\in X$, then:
\begin{enumerate}
\item For
well-sampled data,
\begin{equation*}
\left\vert (\mathbf{\tilde{L}}\mathbf{u}_{M})_{i}-\mathcal{L}%
u(x_{i})\right\vert =O\left( h^{2}\epsilon ^{-1},\epsilon ,\bar{N}%
^{-1/2}\epsilon ^{-(1/2+d/4)}\right),
\end{equation*}
\item and for randomly sampled data,
\begin{equation*}
\left\vert (\mathbf{\tilde{L}}\mathbf{u}_{M})_{i}-\mathcal{L}%
u(x_{i})\right\vert =O\left( h\epsilon ^{-1/2},h^2\epsilon^{-3/2},\epsilon ,\bar{N}%
^{-1/2}\epsilon ^{-(1/2+d/4)}\right).
\end{equation*}%
\end{enumerate}
in high probability, where $\epsilon \rightarrow 0$ after $\bar{N}\rightarrow
\infty$ and $h\to 0$.

\end{lem}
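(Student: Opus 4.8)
The plan is to decompose the error $(\mathbf{\tilde L}\mathbf{u}_M)_i - \mathcal{L}u(x_i)$ through a chain of intermediate quantities, each step introducing one of the error terms in the statement. The natural chain is
\[
\mathbf{\tilde L}\mathbf{u}_M \;=\; \mathbf{L}^h\mathbf{U}
\;\approx\; \mathbf{L}^h\mathbf{u}
\;\approx\; \mathbf{L}\,\mathbf{u}
\;\approx\; L_\epsilon u
\;\approx\; \mathcal{L}u,
\]
evaluated at the $i$th component / at $x_i\in X$. Using the identity $\mathbf{L}^h\mathbf{U}=\mathbf{\tilde L}\mathbf{u}_M$ established just before the lemma, the first link is exact. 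The remaining links are controlled as follows.

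First I would bound $|(\mathbf{L}^h(\mathbf{U}-\mathbf{u}))_i|$. Here $\mathbf{U}-\mathbf{u}=(\mathbf{u}_M-\mathbf{u}_M^\gamma,\ \mathbf{U}_G-\mathbf{u}_G)$; the first block is nonzero only in the single coordinate corresponding to the interior ghost, where $|u(\tilde x_{b,0})-u(\gamma_b(h))|=O(h^2,h\sqrt\epsilon)$ by a Taylor expansion on $B_{\epsilon^r}(\partial M)$ (the same argument as in \eqref{ugerror}), and the second block is $O(h^2,h\sqrt\epsilon)$ by \eqref{ugerror}. To convert these entrywise bounds into a bound on the $i$th output I would use that the relevant row of $\mathbf{L}^h$ has entries of size $O(\epsilon^{-d/2-1}/\bar N)$ and only $O(\bar N\epsilon^{d/2})$ nonzero entries in the $\epsilon^r$-ball around $x_i$ (the kernel is effectively supported there, up to $O(\epsilon^2)$ tails), so each row sums in absolute value to $O(\epsilon^{-1})$ on the interior block; combined with the entrywise error this yields a contribution $O(h^2\epsilon^{-1},h\epsilon^{-1/2})$ — the new dominant terms for well-sampled and random data respectively. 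Second, replace $\mathbf{L}^h$ by $\mathbf{L}$ acting on the exact-manifold vector $\mathbf{u}$: by the cited estimate $\mathbf{L}^h=\mathbf{L}+O(h^2\epsilon^{-3/2})$ (Lemma A.2 of \cite{jiang2020ghost}), and since $\|\mathbf{u}\|_\infty$ is bounded, this adds $O(h^2\epsilon^{-3/2})$, which is present in the random case and absorbed into $h^2\epsilon^{-1}$ in the well-sampled case where $h=O(\epsilon^{1/d})$ makes it subdominant (for $d\ge 1$, $h^2\epsilon^{-3/2}=O(\epsilon^{2/d-3/2})$ versus $h^2\epsilon^{-1}=O(\epsilon^{2/d-1})$; one has to check the bookkeeping but in the well-sampled regime this term does not survive in the stated list). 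Finally, $(\mathbf{L}\mathbf{u})_i$ is exactly the discretized integral operator of Lemma~\ref{thm_LK} applied at the genuine manifold point $x_i\in X$ with all evaluation nodes lying on $M$, so $|(\mathbf{L}\mathbf{u})_i-\mathcal{L}u(x_i)|=O(\epsilon,\bar N^{-1/2}\epsilon^{-(1/2+d/4)})$ in high probability, since $x_i$ has distance $\Omega(\epsilon^r)$ from $\partial(M\cup\Delta M)$ by hypothesis — this is where the hypothesis on the extended manifold is used, and where the Monte-Carlo term picks up the $\bar N$ (not $N$) because the column nodes number $\bar N$.

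Collecting the four contributions and discarding dominated terms gives the two stated bounds. The main obstacle, and the step I would spend the most care on, is the first one: justifying that an entrywise $O(h^2,h\sqrt\epsilon)$ perturbation of $\mathbf{U}$, and an $O(h^2\epsilon^{-3/2})$ matrix perturbation $\mathbf{L}^h-\mathbf{L}$, propagate to only $O(h^2\epsilon^{-1},h\epsilon^{-1/2})$ and $O(h^2\epsilon^{-3/2})$ at the output — i.e. controlling the operator norm of a single row of $\mathbf{L}^h$ in the $\ell^\infty\to$(scalar) sense. This requires the locality of the kernel (support in the $\epsilon^r$-ball up to negligible tails) together with a count of how many sample points fall in that ball, which holds in high probability for i.i.d. samples; the $O(h^2\epsilon^{-3/2})$ matrix bound must also be shown to act on a vector of bounded sup-norm without picking up extra $\bar N$ factors, which again follows from row-sparsity. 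Everything else is a routine chaining of previously established estimates.
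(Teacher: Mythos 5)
Your decomposition is essentially the paper's: writing $\mathbf{\tilde L}\mathbf{u}_M=\mathbf{L}^h\mathbf{U}$ and chaining through $\mathbf{L}^h\mathbf{u}$, $\mathbf{L}\mathbf{u}$, and $\mathcal{L}u(x_i)$ reproduces exactly the four error contributions in the paper's proof (your first step combines the paper's $\mathbf{L}^{(1)}(\mathbf{u}_M-\mathbf{u}_M^\gamma)$ and $\mathbf{L}^{(2)}(\mathbf{U}_G-\mathbf{u}_G)$ terms), and your treatment of the randomly sampled case, including the row-sum $O(\epsilon^{-1})$ argument and the $O(h^2\epsilon^{-3/2})$ matrix perturbation, is correct and in fact more explicit than the paper about why entrywise bounds propagate through a row of $\mathbf{L}^h$.

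There is, however, a genuine error in your handling of the well-sampled case. You claim the $O(h^2\epsilon^{-3/2})$ contribution from $\mathbf{L}^h-\mathbf{L}$ is ``absorbed into $h^2\epsilon^{-1}$'' because it is subdominant; the comparison is backwards, since $\epsilon^{2/d-3/2}$ blows up \emph{faster} than $\epsilon^{2/d-1}$ as $\epsilon\to 0$. If this term were genuinely present at order $h^2\epsilon^{-3/2}$ for well-sampled data, the stated bound $O(h^2\epsilon^{-1},\epsilon,\bar N^{-1/2}\epsilon^{-(1/2+d/4)})$ would be false. The actual reason it is absent is structural, not asymptotic: for well-sampled data the secant-line construction places the interior ghost point $\tilde x_{b,0}=x_b-h\boldsymbol{\tilde\nu}$ exactly at an existing interior data point on $M$ (Fig.~\ref{fig1dsketch}(b)), so $X^h=X$, hence $\mathbf{L}^h=\mathbf{L}$ and $\mathbf{u}_M=\mathbf{u}_M^\gamma$, and both the first and third contributions in the decomposition vanish identically. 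Only the extrapolation error \eqref{ugerror} of order $h^2$ (through $\mathbf{L}^{(2)}$, giving $h^2\epsilon^{-1}$) and the interior pointwise consistency survive. You should replace the domination argument by this observation; the rest of the proof stands.
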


\begin{proof}
For each $x_{i}\in X$, we use the definitions of vectors \eqref{umh}-%
\eqref{ugh}, \eqref{uM}-\eqref{uh}
\begin{eqnarray}
|(\mathbf{\tilde{L}u}_{M})_{i}-\mathcal{L}u(x_{i})| &=&|(\mathbf{L}^{h}%
\mathbf{U})_{i}-\mathcal{L}u(x_{i})|  \notag \\
&=&|(\mathbf{L}^{h}\mathbf{U})_{i}-(\mathbf{L}^{h}\mathbf{u})_{i}+(\mathbf{L}%
^{h}\mathbf{u})_{i}-(\mathbf{Lu})_{i}+(\mathbf{Lu})_{i}-\mathcal{L}u(x_{i})|
\notag \\
&\leq &\left\vert \big(\mathbf{L}^{(1)}(\mathbf{u}_{M}-\mathbf{u}%
_{M}^{\gamma })\big)_{i}\right\vert +\left\vert \big(\mathbf{L}^{(2)}(%
\mathbf{U}_{G}-\mathbf{u}_{G})\big)_{i}\right\vert +\left\vert \big((\mathbf{%
L}^{h}-\mathbf{L})\mathbf{u}\big)_{i}\right\vert +\left\vert (\mathbf{L}%
\mathbf{u})_{i}-\mathcal{L}u(x_{i})\right\vert,  \label{eqnconc}
\end{eqnarray}
where we notice that each element of $\mathbf{u}_{M}$ in \eqref{umh} is defined on
$X^h$ and each element of $\mathbf{u}^\gamma_{M}$ in \eqref{uM} is defined on $X$.

For randomly sampled data, we have%
\begin{eqnarray}
|(\mathbf{\tilde{L}u}_{M})_{i}-\mathcal{L}u(x_{i})| &=&O(h\epsilon
^{-1/2})+O(h\epsilon ^{-1/2})+O(h^{2}\epsilon ^{-3/2})+%
O(\epsilon ,\bar{N}^{-1/2}\epsilon ^{-(1/2+d/4)})  \notag \\
&=&O\left( h\epsilon ^{-1/2},h^2\epsilon^{-3/2},\epsilon ,\bar{N}^{-1/2}\epsilon
^{-(1/2+d/4)}\right) .  \label{eqn321}
\end{eqnarray}%
Here, the first error bound of order-$h\epsilon ^{-1/2}$ is only valid for
randomly sampled data. This error rate is due to the nonzero components of $%
\mathbf{u}_{M}-\mathbf{u}_{M}^{\gamma }$, which are, $u(\gamma _{b}(h))-u(%
\tilde{x}_{b,0})=O(h\sqrt{\epsilon })$ only when the data is
random and an additional order-$\epsilon ^{-1}$ from the components of $%
\mathbf{L}^{(1)}$. The second error bound is due to \eqref{ugerror}; as we
noted before, the rate-$h\epsilon ^{-1/2}$ is valid only for randomly
sampled data. The third error rate of order-$h^{2}\epsilon ^{-3/2}$ is
induced by the distance of the estimated ghost point from the manifold, $%
\gamma _{b}(h)-\tilde{x}_{b,0}=O(h\sqrt{\epsilon })$, which is
valid for randomly sampled data, as discussed in \cite{jiang2020ghost}. The
fourth error bound is the pointwise consistency in \eqref{Eqn:cbc} for the
interior data points. 

For well sampled data, we have

\begin{equation*}
|(\mathbf{\tilde{L}u}_{M})_{i}-\mathcal{L}u(x_{i})|=O%
(h^{2}\epsilon ^{-1})+O(\epsilon ,\bar{N}^{-1/2}\epsilon
^{-(1/2+d/4)}),
\end{equation*}%
where the first and third error bounds in \eqref{eqnconc} vanishes. The
second error bound in \eqref{eqnconc} of order-$%
h^{2}\epsilon ^{-1}$ is due to \eqref{ugerror} and this rate  is valid for well-sampled data. The fourth error bound
of $O(\epsilon ,\bar{N}^{-1/2}\epsilon ^{-(1/2+d/4)})$\ is still
the pointwise consistency in \eqref{Eqn:cbc} for the interior data points.

\end{proof}

%

\section{Kernel methods for time-dependent PDEs}\label{section3}

In this section, we discuss the discretization of the advection-diffusion
PDE problem (\ref{Eqn:utbf}) and then show the convergence of the proposed
scheme. We split the discussion into two subsections, concerning the no
boundary and boundary cases, respectively.

\subsection{No boundary case}

In the no boundary case, we will just employ the standard DM, discussed in
Section~\ref{basicDMtheory}, to approximate $\mathcal{L}$ in %
\eqref{Eqn:oper3}. Here, since the discrete estimator, $\mathbf{L}\in\mathbb{R}%
^{N\times N}$, is consistent in pointwise-sense, we can achieve a uniform error bound. To begin with, we use the notation $u_{j}^{n}:=u\left( x_{j},t_{n}\right) $
for the true solution at $x_{j}\in X\subset M$ and at time $t_{n}=n\Delta t$%
. Here, $\Delta t$ is the integration time step and $n$ is the number of
time steps such that $0\leq n\Delta t\leq T$. For a given forcing $%
f_{j}^{n}:=f(x_{j},t_{n})$, we denote $U_{j}^{n}$ as the numerical
approximate solution of $u_{j}^{n}$ that will be specified shortly.
For convenience of discussion, we also define the column vectors
\begin{eqnarray}
\mathbf{u}_{M}^{n} &=&(u_{1}^{n},\ldots ,u_{N}^{n}),   \notag \\
\mathbf{U}_{M}^{n} &=&(U_{1}^{n},\ldots ,U_{N}^{n}),  \label{eqnuUf} \\
\mathbf{f}^{n} &=&(f_{1}^{n},\ldots ,f_{N}^{n}).  \notag
\end{eqnarray}
In this paper, we consider the implicit Euler scheme for the time discretization. Together
with the spatial DM discrete estimate, we approximate the PDE in (\ref{Eqn:utbf})
as,%
\begin{equation}
\frac{U_{j}^{n+1}-U_{j}^{n}}{\Delta t}=(\mathbf{L}\mathbf{U}%
_{M}^{n+1})_{j}+f_{j}^{n+1},  \label{Eqn:TBLKS2}
\end{equation}%
where $(\mathbf{L}\mathbf{U}_{M}^{n+1})_{j}$ denotes the $j$th component of
the vector $\mathbf{L}\mathbf{U}_{M}^{n+1}$ corresponding to the function
value at $x_{j}\in M$. In matrix form, the update of the solution can be
written as,%
\begin{equation}
\mathbf{U}_{M}^{n+1}=(\mathbf{I-}\Delta t\mathbf{L})^{-1}(\mathbf{U}%
_{M}^{n}+\Delta t\mathbf{f}^{n+1}).  \label{eqn:UMnp1}
\end{equation}

First, we show the consistency for our scheme (\ref%
{Eqn:TBLKS2}) in solving the time dependent problem (\ref{Eqn:utbf}):

\begin{lem}
\label{thmconsis} (uniform consistency for the scheme (\ref%
{Eqn:TBLKS2})) Let $T_{j}^{n}:=T(x_{j},t_{n})$ be the pointwise truncation
error at $x_j\in X$ and $t_n\in [0,T]$ defined as,
\begin{equation}
T_{j}^{n+1}=\frac{u_{j}^{n+1}-u_{j}^{n}}{\Delta t}-(\mathbf{L}\mathbf{u}%
_{M}^{n+1})_{j}-f_{j}^{n+1}.  \label{Eqn:Tjn2}
\end{equation}%
Under the same hypothesis as in Lemma \ref{thm_LK} and $\Vert u_{tt}\Vert
_{\infty }<\infty $ for all $t$, then the scheme (\ref{Eqn:TBLKS2})\
is consistent uniformly. In particular, we have%
\begin{equation}
\Vert \mathbf{T}^{n}\Vert _{\infty}=O\left( \Delta t,\epsilon
,N^{-1/2}\epsilon ^{-(1/2+d/4)}\right) ,  \label{Eqn:Tnl3}
\end{equation}%
in high probability, as $\Delta t\rightarrow 0$ and $\epsilon \rightarrow 0$
after $N\rightarrow \infty $. Here, we use the notation $\mathbf{T}%
^{n}=(T_{1}^{n},\ldots ,T_{N}^{n})^{\top }$.
\end{lem}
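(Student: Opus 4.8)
The plan is to decompose the truncation error $T_j^{n+1}$ into a temporal discretization part and a spatial discretization part, then bound each separately. Starting from the definition \eqref{Eqn:Tjn2}, I would add and subtract the exact value of the continuous operator $\mathcal{L}u(x_j,t_{n+1})$ together with $u_t(x_j,t_{n+1})$. Since $u$ solves the PDE \eqref{Eqn:utbf}, we have $u_t(x_j,t_{n+1}) = \mathcal{L}u(x_j,t_{n+1}) + f_j^{n+1}$, so the forcing term and the continuous operator term combine to cancel against $u_t$. This leaves
\begin{equation*}
T_j^{n+1} = \left(\frac{u_j^{n+1}-u_j^{n}}{\Delta t} - u_t(x_j,t_{n+1})\right) + \left(\mathcal{L}u(x_j,t_{n+1}) - (\mathbf{L}\mathbf{u}_M^{n+1})_j\right).
\end{equation*}

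For the first bracket, I would apply Taylor's theorem in time with the integral (or Lagrange) remainder: the backward difference quotient satisfies $\frac{u_j^{n+1}-u_j^n}{\Delta t} - u_t(x_j,t_{n+1}) = -\tfrac{1}{2}\Delta t\, u_{tt}(x_j,\xi)$ for some $\xi\in(t_n,t_{n+1})$, so this term is bounded by $\tfrac{1}{2}\Delta t\,\|u_{tt}\|_\infty = O(\Delta t)$ uniformly in $j$, using the hypothesis $\|u_{tt}\|_\infty < \infty$. For the second bracket, this is exactly the spatial consistency error of the discrete operator $\mathbf{L}$ applied to the vector $\mathbf{u}_M^{n+1}$ whose components are the true solution values at the fixed time $t_{n+1}$; since $u(\cdot,t_{n+1}) \in C^3(M)\cap L^2(M)$ under the standing hypothesis, Lemma~\ref{thm_LK} gives $|(\mathbf{L}\mathbf{u}_M^{n+1})_j - \mathcal{L}u(x_j,t_{n+1})| = O(\epsilon, N^{-1/2}\epsilon^{-(1/2+d/4)})$ in high probability, uniformly over $x_j\in M$ (no boundary, so the estimate holds at every sample point).

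Combining the two bounds via the triangle inequality and taking the maximum over $j=1,\ldots,N$ yields $\|\mathbf{T}^n\|_\infty = O(\Delta t, \epsilon, N^{-1/2}\epsilon^{-(1/2+d/4)})$ in high probability, in the stated limit regime $\Delta t\to 0$ and $\epsilon\to 0$ after $N\to\infty$, which is the claim. The only mildly delicate point is that the spatial error bound from Lemma~\ref{thm_LK} is a pointwise (hence, after maximizing, uniform) probabilistic statement, so one should note that the high-probability event can be taken uniformly over the finitely many sample points $x_j$ (a union bound over $N$ points only costs logarithmic factors absorbed into the constants, or one invokes the uniform version already implicit in the cited references); everything else is routine Taylor expansion. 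I do not anticipate a genuine obstacle here — the lemma is essentially a bookkeeping combination of Lemma~\ref{thm_LK} with a one-line time-Taylor estimate.
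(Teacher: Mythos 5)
Your proposal is correct and follows essentially the same route as the paper: a Taylor expansion in time around $(x_j,t_{n+1})$ to isolate the $-\tfrac{\Delta t}{2}u_{tt}(x_j,\eta)$ remainder, cancellation of $u_t-\mathcal{L}u-f$ via the PDE, and the pointwise consistency of $\mathbf{L}$ from Lemma~\ref{thm_LK} for the spatial part. Your added remark about taking the high-probability event uniformly over the $N$ sample points is a reasonable extra precaution that the paper leaves implicit.
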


\begin{proof}
We expand in a Taylor series in time around $(x_{j},t_{n+1})$,
\BEA
u_j^{n} = u_j^{n+1} -  u_t (x_j, t_{n+1})  \Delta t + \frac{(\Delta t)^2}{2} u_{tt}(x_j,\eta),\nonumber
\EEA
where $t_n<\eta<t_{n+1}$. Inserting this into the definition of the local truncation error (\ref{Eqn:Tjn2}), we obtain
\begin{equation*}
T_{j}^{n+1}=(u_{t}-\mathcal{L}u-f)|_{(x_{j},t_{n+1})}-\frac{\Delta t}{2}%
u_{tt}(x_{j},\eta )+\big(\mathcal{L}u_j^{n+1}-(\mathbf{L}\mathbf{U%
}^{n+1}_M)_{j}\big),
\end{equation*}%
and the proof is complete by using the fact that $\Vert u_{tt}\Vert _{\infty}< \infty$ and the pointwise consistency in \eqref{Eqn:cbc}.
\end{proof}

The second ingredient for convergence is to show the following stability
condition.

\begin{lem}
\label{lemstab} (maximum norm stability for the scheme (\ref%
{Eqn:TBLKS2})) Let $\mathbf{L\in }\mathbb{R}^{N\times N}$ denotes the
discrete estimator (\ref{classicDMmatrix}) for the advection-diffusion
operator $\mathcal{L}$. Then,
\begin{equation*}
\Vert (\mathbf{I}-\Delta t\mathbf{L})^{-1}\Vert _{\infty }\leq 1.
\end{equation*}
\end{lem}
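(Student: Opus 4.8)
The plan is to read off the graph-Laplacian structure of $\mathbf{L}$ from \eqref{classicDMmatrix} and then run a discrete maximum-principle argument. First I would record three elementary facts. Because the Gaussian kernel \eqref{Eqn:Kexy} is strictly positive, $\mathbf{K}_{ij}>0$, so the off-diagonal entries of $\mathbf{L}$ satisfy $\mathbf{L}_{ij}=\frac{\epsilon^{-d/2-1}}{m_0 N}\mathbf{K}_{ij}\ge 0$ for $i\neq j$. Because $\mathbf{D}_{ii}=\sum_{j}\mathbf{K}_{ij}$, the diagonal entries satisfy $\mathbf{L}_{ii}=-\frac{\epsilon^{-d/2-1}}{m_0 N}\sum_{j\neq i}\mathbf{K}_{ij}\le 0$. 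And every row of $\mathbf{L}$ sums to zero, $\sum_{j=1}^N \mathbf{L}_{ij}=0$; equivalently $-\mathbf{L}$ is a graph Laplacian, and in particular $\sum_{j\neq i}\mathbf{L}_{ij}=-\mathbf{L}_{ii}\ge 0$.

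Next I would set $\mathbf{A}:=\mathbf{I}-\Delta t\mathbf{L}$ and fix an arbitrary $\mathbf{v}\in\mathbb{R}^N$. Choosing an index $i$ with $|v_i|=\|\mathbf{v}\|_\infty$ and, after possibly replacing $\mathbf{v}$ by $-\mathbf{v}$ (which changes neither $\|\mathbf{v}\|_\infty$ nor $\|\mathbf{A}\mathbf{v}\|_\infty$), assuming $v_i=\|\mathbf{v}\|_\infty\ge 0$, I would expand
\begin{equation*}
(\mathbf{A}\mathbf{v})_i=(1-\Delta t\,\mathbf{L}_{ii})\,v_i-\Delta t\sum_{j\neq i}\mathbf{L}_{ij}v_j ,
\end{equation*}
and bound the last sum using $\mathbf{L}_{ij}\ge 0$, $v_j\le v_i$, and the zero-row-sum property, namely $\sum_{j\neq i}\mathbf{L}_{ij}v_j\le v_i\sum_{j\neq i}\mathbf{L}_{ij}=-v_i\mathbf{L}_{ii}$. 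Since $\Delta t>0$, substituting this bound makes the right-hand side collapse exactly to $v_i$, giving $(\mathbf{A}\mathbf{v})_i\ge v_i=\|\mathbf{v}\|_\infty\ge 0$.

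Finally, this yields $\|\mathbf{A}\mathbf{v}\|_\infty\ge(\mathbf{A}\mathbf{v})_i\ge\|\mathbf{v}\|_\infty$ for every $\mathbf{v}$. Taking $\mathbf{v}$ in the kernel shows $\mathbf{A}$ is injective, hence invertible (it is square), and for arbitrary $\mathbf{w}$, writing $\mathbf{v}=\mathbf{A}^{-1}\mathbf{w}$ gives $\|\mathbf{A}^{-1}\mathbf{w}\|_\infty=\|\mathbf{v}\|_\infty\le\|\mathbf{A}\mathbf{v}\|_\infty=\|\mathbf{w}\|_\infty$, i.e. $\|(\mathbf{I}-\Delta t\mathbf{L})^{-1}\|_\infty\le 1$. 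I do not expect a genuine obstacle here: the argument is a standard discrete maximum principle, and the only things to get right are the sign bookkeeping (positivity of the kernel, $\mathbf{L}_{ii}\le 0$, $\Delta t>0$) and the harmless normalization to $v_i\ge 0$ at the extremal index. An alternative I would keep in reserve — if one prefers a structural statement — is to observe that $\mathbf{A}$ is a nonsingular M-matrix with $\mathbf{A}\mathbf{1}=\mathbf{1}$, so $\mathbf{A}^{-1}\ge 0$ and $\mathbf{A}^{-1}\mathbf{1}=\mathbf{1}$, whence the maximal absolute row sum of $\mathbf{A}^{-1}$ equals $1$.
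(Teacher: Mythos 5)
Your proof is correct. It rests on exactly the same three structural facts the paper uses --- strict positivity of the Gaussian kernel giving $\mathbf{L}_{ij}\ge 0$ for $j\neq i$, $\mathbf{L}_{ii}\le 0$, and the zero row sum $\mathbf{L}\mathbf{1}=\mathbf{0}$ --- but where the paper observes that these make $\mathbf{I}-\Delta t\mathbf{L}$ strictly diagonally dominant with row-dominance gap exactly $1$ and then cites the Ahlberg--Nilson--Varah bound $\Vert\mathbf{A}^{-1}\Vert_\infty\le 1/\min_i(|a_{ii}|-\sum_{j\neq i}|a_{ij}|)$, you instead prove the inequality from scratch via a discrete maximum principle: normalize so the extremal component is nonnegative, show $(\mathbf{A}\mathbf{v})_i\ge v_i=\Vert\mathbf{v}\Vert_\infty$, and deduce injectivity and $\Vert\mathbf{A}^{-1}\Vert_\infty\le 1$ in one stroke. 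Your version is self-contained (in effect you reprove the Varah bound in the special case where the dominance gap is $1$), while the paper's is shorter at the cost of an external citation; your closing M-matrix remark ($\mathbf{A}^{-1}\ge 0$ and $\mathbf{A}^{-1}\mathbf{1}=\mathbf{1}$, so the maximal absolute row sum is exactly $1$) is also valid and even shows the bound is attained. The only bookkeeping worth double-checking is the step $\sum_{j\neq i}\mathbf{L}_{ij}v_j\le v_i\sum_{j\neq i}\mathbf{L}_{ij}=-v_i\mathbf{L}_{ii}$, which indeed needs both $\mathbf{L}_{ij}\ge 0$ and $v_j\le |v_j|\le v_i$; you have both.
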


\begin{proof}
Notice that $\mathbf{L}=(L_{ij})_{i,j=1}^{N}=\frac{\epsilon ^{-d/2-1}}{m_{0}N%
}\big(\mathbf{K}-\mathbf{D}\big)$ satisfy $L_{ii}<0$ and $L_{ij}>0$ for $%
j\neq i$, and its row sum is zero,
\begin{equation*}
\mathbf{L1}=\frac{\epsilon ^{-d/2-1}}{m_{0}N}\big(\mathbf{K1}-\mathbf{D1}%
\big)\mathbf{=0.}
\end{equation*}
Following this property, it is clear that $\mathbf{L}$ in (\ref%
{classicDMmatrix}) satisfies
\begin{equation*}
\left\vert 1-\Delta tL_{ii}\right\vert -\Delta t\sum _{j\neq i}\left\vert
L_{ij}\right\vert =1-\Delta t\sum _{j}L_{ij}=1,
\end{equation*}%
for all $i$. Thus, the matrix $\mathbf{I}-\Delta t\mathbf{L}$ is strictly
diagonally dominant (SDD). Using the Ahlberg-Nilson-Varah bound for SDD
matrices \cite{ahlberg1963convergence,varah1975lower}, one obtains%
\begin{equation*}
\Vert (\mathbf{I}-\Delta t\mathbf{L})^{-1}\Vert _{\infty }\leq \frac{1}{%
\min_{i}(|1-\Delta tL_{ii}|-\Delta t\sum _{j\neq i}|L_{ij}|)}=1.
\end{equation*}
\end{proof}

With these two results, we can now show the uniform convergence, following the standard textbook technique in \cite%
{Morton2005Num,Thomas1995Numerical}.

\begin{thm}
\label{convergeencenoboundary} Under the same hypotheses of the previous two
lemmas, then
\begin{equation*}
\max_{0\leq p\leq n}\Vert \mathbf{u}_{M}^{p}-\mathbf{U}_{M}^{p}\Vert _{{\infty }}\leq n\Delta t\max_{0\leq p\leq n}\Vert \mathbf{T}^{p}\Vert
_{{\infty }}=O\left( \Delta t,\epsilon ,N^{-1/2}\epsilon
^{-(1/2+d/4)}\right) .
\end{equation*}
\end{thm}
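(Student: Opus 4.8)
The plan is to run the standard Lax-type argument: consistency plus stability implies convergence, with error accumulation controlled over the $n \le T/\Delta t$ time steps. First I would introduce the error vector $\mathbf{e}^{p} := \mathbf{u}_{M}^{p} - \mathbf{U}_{M}^{p}$ and note that the exact initial data is used, so $\mathbf{e}^{0} = \mathbf{0}$. Then I would subtract the scheme \eqref{Eqn:TBLKS2} from the truncation-error identity \eqref{Eqn:Tjn2} written in matrix form,
\begin{equation*}
\frac{\mathbf{u}_{M}^{n+1}-\mathbf{u}_{M}^{n}}{\Delta t} = \mathbf{L}\mathbf{u}_{M}^{n+1} + \mathbf{f}^{n+1} + \mathbf{T}^{n+1},
\end{equation*}
to obtain the error recursion $(\mathbf{I}-\Delta t\mathbf{L})\mathbf{e}^{n+1} = \mathbf{e}^{n} + \Delta t\,\mathbf{T}^{n+1}$, hence $\mathbf{e}^{n+1} = (\mathbf{I}-\Delta t\mathbf{L})^{-1}\big(\mathbf{e}^{n} + \Delta t\,\mathbf{T}^{n+1}\big)$.

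Next I would apply the maximum-norm stability bound from Lemma~\ref{lemstab}, $\|(\mathbf{I}-\Delta t\mathbf{L})^{-1}\|_{\infty} \le 1$, together with submultiplicativity of the operator norm, to get $\|\mathbf{e}^{n+1}\|_{\infty} \le \|\mathbf{e}^{n}\|_{\infty} + \Delta t\,\|\mathbf{T}^{n+1}\|_{\infty}$. Iterating this inequality from $p=0$ to $p=n$ and using $\mathbf{e}^{0}=\mathbf{0}$ yields
\begin{equation*}
\|\mathbf{e}^{n}\|_{\infty} \le \Delta t\sum_{p=1}^{n}\|\mathbf{T}^{p}\|_{\infty} \le n\Delta t\max_{0\le p\le n}\|\mathbf{T}^{p}\|_{\infty},
\end{equation*}
and the same bound holds with $\mathbf{e}^{n}$ replaced by $\max_{0\le p\le n}\|\mathbf{e}^{p}\|_{\infty}$ since the right-hand side is monotone in $n$.

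Finally I would invoke the uniform consistency estimate \eqref{Eqn:Tnl3} from Lemma~\ref{thmconsis}, which gives $\max_{0\le p\le n}\|\mathbf{T}^{p}\|_{\infty} = O(\Delta t,\epsilon,N^{-1/2}\epsilon^{-(1/2+d/4)})$ in high probability, and observe that $n\Delta t \le T$ is bounded, so the factor $n\Delta t$ does not degrade the rate. Combining, $\max_{0\le p\le n}\|\mathbf{u}_{M}^{p}-\mathbf{U}_{M}^{p}\|_{\infty} = O(\Delta t,\epsilon,N^{-1/2}\epsilon^{-(1/2+d/4)})$ in high probability, with the limits taken as $\Delta t\to 0$ and $\epsilon\to 0$ after $N\to\infty$, exactly matching the statement. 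There is no real obstacle here: every ingredient (consistency, stability, vanishing initial error) is already established, and the only point requiring a word of care is that the high-probability qualifier is inherited verbatim from Lemma~\ref{thmconsis} and that $n\Delta t\le T$ keeps the accumulated-error prefactor $O(1)$.
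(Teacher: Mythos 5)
Your proposal is correct and follows essentially the same route as the paper: derive the error recursion $(\mathbf{I}-\Delta t\mathbf{L})\mathbf{e}^{n+1}=\mathbf{e}^{n}+\Delta t\,\mathbf{T}^{n+1}$, apply the maximum-norm stability bound $\Vert(\mathbf{I}-\Delta t\mathbf{L})^{-1}\Vert_{\infty}\leq 1$ from Lemma~\ref{lemstab}, iterate with $\mathbf{e}^{0}=\mathbf{0}$, and invoke the consistency estimate of Lemma~\ref{thmconsis}. The only cosmetic difference is that you subtract the two identities directly rather than expanding from the explicit update formula \eqref{eqn:UMnp1}; the resulting recursion and bounds are identical.
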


\begin{proof}
From (\ref{eqn:UMnp1}), we obtain
\begin{eqnarray*}
\mathbf{U}_{M}^{n+1}-\mathbf{u}_{M}^{n+1} &=&(\mathbf{I-}\Delta t\mathbf{L}%
)^{-1}\big[(\mathbf{U}_{M}^{n}+\Delta t\mathbf{f}^{n+1})-(\mathbf{I-}\Delta t%
\mathbf{L})\mathbf{u}_{M}^{n+1}\big] \\
&=&(\mathbf{I-}\Delta t\mathbf{L})^{-1}\big[\mathbf{U}_{M}^{n}+\Delta t\mathbf{f}%
^{n+1}-\mathbf{u}_{M}^{n+1}+\mathbf{u}_{M}^{n}-\mathbf{u}_{M}^{n}+\Delta t%
\mathbf{Lu}_{M}^{n+1}\big] \\
&=&(\mathbf{I-}\Delta t\mathbf{L})^{-1}\left[ (\mathbf{U}_{M}^{n}-\mathbf{u}%
_{M}^{n})-\Delta t\left( \frac{\mathbf{u}_{M}^{n+1}-\mathbf{u}_{M}^{n}}{%
\Delta t}-\mathbf{Lu}_{M}^{n+1}-\mathbf{f}^{n+1}\right) \right]  \\
&=&(\mathbf{I-}\Delta t\mathbf{L})^{-1}[(\mathbf{U}_{M}^{n}-\mathbf{u}%
_{M}^{n})-\Delta t\mathbf{T}^{n+1}].
\end{eqnarray*}%
Define $\mathbf{e}^{n}=\mathbf{U}_{M}^{n}-\mathbf{u}_{M}^{n}$ and take the maximum norm, we have
\begin{equation*}
\Vert \mathbf{e}^{n+1}\Vert _{{\infty }}\leq \Vert (\mathbf{I-}\Delta t%
\mathbf{L})^{-1}\Vert _{\infty }\big[ \Vert \mathbf{e}^{n}\Vert
_{{\infty }}+\Delta t\Vert \mathbf{T}^{n+1}\Vert _{{\infty }}%
\big] \leq \Vert \mathbf{e}^{n}\Vert _{{\infty }}+\Delta t\Vert
\mathbf{T}^{n+1}\Vert _{{\infty }}.
\end{equation*}%
Since $\Vert \mathbf{e}^{0}\Vert _{\infty }=0$, we have
\begin{equation*}
\Vert \mathbf{e}^{n}\Vert _{{\infty }}\leq n\Delta t\max_{0\leq p\leq
n}\Vert \mathbf{T}^{p}\Vert _{{\infty }}.
\end{equation*}%
for all time. Using Lemma \ref{thmconsis}, the proof is complete.
\end{proof}

\subsection{Initial-Boundary Value Problems}

In this section, we will consider the case with boundary. In the discussion
below, we consider both the Dirichlet and Neumann boundary conditions.
Specifically, we will employ the GPDM spatial discretization in tandem with
the temporal implicit Euler scheme. To facilitate the discussion, we split
the GPDM matrix as,
\begin{equation}
\mathbf{\tilde{L}}=%
\begin{pmatrix}
\mathbf{\tilde{L}}^{I,I} & \mathbf{\tilde{L}}^{I,B} \\
\mathbf{\tilde{L}}^{B,I} & \mathbf{\tilde{L}}^{B,B}%
\end{pmatrix}%
\in \mathbb{R}^{N\times N},  \label{eqnLtisep}
\end{equation}%
where we will use the matrices $\mathbf{\tilde{L}}^{I,I}\in \mathbb{R}%
^{(N-B)\times (N-B)}$, corresponding to components of interior points, and $%
\mathbf{\tilde{L}}^{I,B}\in \mathbb{R}^{(N-B)\times B}$, corresponding to
the cross terms between the interior and boundary points, for the numerical
scheme. We will also split each of following $N$-dimensional column vectors,
$\mathbf{U}_{M}=(\mathbf{U}_{I},\mathbf{U}_{B})$ and $\mathbf{u}_{M}=(%
\mathbf{u}_{I},\mathbf{u}_{B})$, into two parts, one that includes the
interior components and the another one that includes the boundary
components. With these notations, one can see that,
\begin{equation}
\begin{pmatrix}
\mathbf{\tilde{L}}^{I,I} & \mathbf{\tilde{L}}^{I,B}%
\end{pmatrix}%
\mathbf{U}_{M}=\mathbf{\tilde{L}}^{I,I}\mathbf{U}_{I}+\mathbf{\tilde{L}}%
^{I,B}\mathbf{U}_{B}.  \label{eqnLUM}
\end{equation}%
which we will use to simplify the numerical scheme. We should point out
that, since these matrices above are constructed on $X^{h}=\{x_{1},\ldots ,%
\tilde{x}_{b,0},\ldots ,x_{N}\}$, then $\mathbf{U}_{M}=\{U_{1},\ldots
,U_{N}\}$ in (\ref{eqnuUf})\ is the approximating solution of $\mathbf{u}%
_{M} $, whose components are the true solution evaluated at $X^{h}$, as
defined in \eqref{umh}. The forcing term is also given from $f_{j}\in
\{f(x_{1}),\ldots ,f(\tilde{x}_{b,0}),\ldots ,f(x_{N})\}$ which is evaluated at $X^h$.


\noindent \textbf{Dirichlet boundary:} In particular, the discrete solver,
\begin{eqnarray}
\frac{U_{j}^{n+1}-U_{j}^{n}}{\Delta t} &=&(\mathbf{\tilde{L}}\mathbf{U}%
_{M}^{n+1})_{j}+f_{j}^{n+1},\quad \ \ j=1,\ldots ,N-B,  \label{discretepde}
\\
U_{b}^{n} &=&g(x_{b}),\quad \text{\ \ \ \ \ \ \ \ \ \ }b=1,\ldots ,B,
\label{DBC}
\end{eqnarray}%
can be simplified to,
\begin{equation}
\frac{U_{j}^{n+1}-U_{j}^{n}}{\Delta t}=(\mathbf{\tilde{L}}^{I,I}\mathbf{U}%
_{I}^{n+1})_{j}+f_{j}^{n+1}+(\mathbf{\tilde{L}}^{I,B}\mathbf{g})_{j},\quad
j=1,\ldots ,N-B,  \label{schemedirichlet}
\end{equation}%
which is numerically iterated starting from the initial condition,
\begin{equation*}
U_{j}^{0}=u(x_{j},0),\quad \ \ {\textrm{for}}\ x_j \in X^h.
\end{equation*}%
Here, the column vector $\mathbf{g}=\left( g(x_{1}),\ldots ,g(x_{B})\right) \in
\mathbb{R}^{B}$ in (\ref{schemedirichlet}). The local truncation
error is defined as,%
\begin{equation}
T_{j}^{n+1}:= T(x_{j},t_{n+1})=\frac{u_{j}^{n+1}-u_{j}^{n}}{\Delta t}-(%
\mathbf{\tilde{L}}^{I,I}\mathbf{u}_{I}^{n+1})_{j}-f_{j}^{n+1}-(\mathbf{%
\tilde{L}}^{I,B}\mathbf{g})_{j},  \label{defD}
\end{equation}
where $u_j^n = u(x_j,t_n)$ for $x_j \in X^h$.

\noindent \textbf{Neumann boundary:} For Neumann BC, the numerical
approximation of the boundary condition is given by,
\begin{equation}
(\mathbf{B}\mathbf{U}_{M}^{n})_{b}:=\frac{U_{b}^{n}-U_{b,0}^{n}}{h}%
=g(x_{b}),\quad \ \ b=1,\ldots ,B,  \label{def:B}
\end{equation}%
where $h$ denotes the distance between boundary point $x_{b}$ and interior
ghost point $\tilde{x}_{b,0}$. In compact notation, we can rewrite the
boundary conditions as,
\begin{equation}
\left( \mathbf{U}_{B}^{n}\right) _{b}=U_{b}^{n}=U_{b,0}^{n}+hg(x_{b}):=(%
\mathbf{E}^{B,I}\mathbf{U}_{I}^{n})_{b}+hg(x_{b}),  \label{NeumannD}
\end{equation}%
where $\mathbf{E}^{B,I}\in \mathbb{R}^{B\times (N-B)}$ is a matrix whose $b$%
th row equals to one on the column corresponding to the $\tilde{x}_{b,0}$
and zero everywhere else. One can show that,
\begin{eqnarray}
\big(\mathbf{B}(\mathbf{u}_{M}-\mathbf{U}_{M})\big)_{b} &=&\frac{1}{h}%
(u(x_{b})-u(\tilde{x}_{b,0}))-\frac{1}{h}(U_b-U_{b,0})=\frac{1}{h}%
(u(x_{b})-u(\tilde{x}_{b,0}))-g(x_{b})  \notag \\
&=&\frac{1}{h}(u(x_{b})-u(\tilde{x}_{b,0})-h\partial _{\boldsymbol{\nu }%
}u(x_{b}))=O(h,\epsilon ^{1/2}),  \label{Errorinb}
\end{eqnarray}%
where we have used the fact that $\partial _{\boldsymbol{\nu }%
}u(x_{b})=\partial _{\boldsymbol{\tilde{\nu}}}u(x_{b})+O%
(h,\epsilon ^{1/2})$. Here, $O(h)$ is only applied to well-sampled data and $O(\epsilon^{1/2})$ is only applied to randomly sampled data as discussed in \eqref{approxghostpoints}. By definition in \eqref{NeumannD}, this also means,
\begin{eqnarray}
\left( \left( \mathbf{u}_{B}-\mathbf{E}^{B,I}\mathbf{u}_{I}\right) -(\mathbf{%
U}_{B}-\mathbf{E}^{B,I}\mathbf{U}_{I})\right) _{b} &=&\left( u(x_{b})-u(%
\tilde{x}_{b,0})\right) -\left( U_{b}-U_{b,0}\right)  \notag \\
&=&\big(h\mathbf{B}(\mathbf{u}_{M}-\mathbf{U}_{M})\big)_{b}=O%
(h^{2},h\epsilon ^{1/2}).  \label{eqneib}
\end{eqnarray}%
In this case, Eq.~\eqref{discretepde} with the boundary condition in %
\eqref{NeumannD} can be written as,
\begin{equation}
\frac{U_{j}^{n+1}-U_{j}^{n}}{\Delta t}=((\mathbf{\tilde{L}}^{I,I}+\mathbf{%
\tilde{L}}^{I,B}\mathbf{E}^{B,I})\mathbf{U}_{I}^{n+1})_{j}+f_{j}^{n+1}+(%
\mathbf{\tilde{L}}^{I,B}h\mathbf{g})_{j},\quad \text{\ \ }j=1,\ldots ,N-B.
\label{schemeneumann}
\end{equation}%
For $u_j^n = u(x_j,t_n)$ with $x_j \in X^h$,  the local trunction error is defined as,%
\begin{equation}
T_{j}^{n+1}\equiv T(x_{j},t_{n+1}):=\frac{u_{j}^{n+1}-u_{j}^{n}}{\Delta t}-((%
\mathbf{\tilde{L}}^{I,I}+\mathbf{\tilde{L}}^{I,B}\mathbf{E}^{B,I})\mathbf{u}%
_{I}^{n+1})_{j}-f_{j}^{n+1}-(\mathbf{\tilde{L}}^{I,B}h\mathbf{g})_{j}.
\label{defN}
\end{equation}

We first show the estimate for the truncation error in the same way as we
did in the previous section.

\begin{lem}
\label{thmconsis2} (local truncation error of the schemes %
\eqref{schemedirichlet} and \eqref{schemeneumann}) Let $%
T_{j}^{n}:=T(x_{j},t_{n})$ be the pointwise truncation errors at $%
(x_{j},t_{n})\in X^h\times \lbrack 0,T]$ defined in \eqref{defD} and %
\eqref{defN} for the Dirichlet and Neumann cases, respectively. Let $R$ be the number of interior points in $X^{h}$ with
distance within $\epsilon ^{r}$, $0<r<1/2$ to the boundary $\partial M$. The
same hypothesis holds as in Lemma \ref{theorem1}.
\begin{enumerate}
\item For well-sampled data, assume that $R=O(N^{1/2})$ as $%
N\rightarrow \infty $ (where $h$ and $\epsilon $ are also functions of $N$)
and $\Vert
u_{tt}\Vert _{\ell ^{2}}<\infty $ for all $t$.
Then, the schemes \eqref{schemedirichlet} and \eqref{schemeneumann} are
consistent in the $\ell ^{2}$-norm, that is,
\begin{equation}
\Vert \mathbf{T}^{n}\Vert _{\ell ^{2}}= O\left( \Delta t,\epsilon
,N^{-1/4}h^2\epsilon^{-1}, \bar{N}^{-1/2}\epsilon ^{-(1/2+d/4)}\right),  \label{Eqn:Tnl2}
\end{equation}
in high probability, as $\Delta t\rightarrow 0$ and $\epsilon \rightarrow 0$
after $\bar{N}\rightarrow \infty $. Here, the notation $\mathbf{T}%
^{n}=(T_{1}^{n},\ldots ,T_{N-B}^{n})$ denotes the error at the interior
points.
\item For randomly sampled data, assume that $\Vert
u_{tt}\Vert _{\infty}<\infty $ for all $t$. Then, the schemes \eqref{schemedirichlet} and %
\eqref{schemeneumann} are consistent  uniformly. In
particular, one has
\begin{equation}
\Vert \mathbf{T}^{n}\Vert _{{\infty }}=O\left( \Delta t,h\epsilon
^{-1/2},h^2\epsilon^{-3/2},\epsilon ,\bar{N}^{-1/2}\epsilon ^{-(1/2+d/4)}\right) ,  \label{eqn:Tnlinf}
\end{equation}%
in high probability, as $\Delta t\rightarrow 0$ and $\epsilon \rightarrow 0$
after $\bar{N}\rightarrow \infty $ and $h\to 0$.
\end{enumerate}
\end{lem}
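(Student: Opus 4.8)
The plan is to reduce the scheme residual to the GPDM consistency estimate of Lemma~\ref{theorem1} plus a temporal Taylor remainder, and then---in the well-sampled case---to exploit the fact that all $h$-dependent error contributions are confined to the $O(N^{1/2})$ interior points within $\epsilon^{r}$ of $\partial M$, so that the $1/N$ weighting in the $\ell^{2}$ norm converts them into the advertised rate.

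First I would rewrite the scheme applied to the exact solution in terms of $\mathbf{\tilde{L}}\mathbf{u}_{M}$. For Dirichlet, the boundary prescription forces $\mathbf{g}$ to equal the boundary block $\mathbf{u}_{B}^{n+1}$ of the exact solution (which is time-independent on $\partial M$), so $(\mathbf{\tilde{L}}^{I,I}\mathbf{u}_{I}^{n+1})_{j}+(\mathbf{\tilde{L}}^{I,B}\mathbf{g})_{j}=(\mathbf{\tilde{L}}\mathbf{u}_{M}^{n+1})_{j}$ at every interior $j$. For Neumann, writing $\hat{\mathbf{u}}_{B}:=\mathbf{E}^{B,I}\mathbf{u}_{I}^{n+1}+h\mathbf{g}$ gives $(\mathbf{\tilde{L}}^{I,I}+\mathbf{\tilde{L}}^{I,B}\mathbf{E}^{B,I})\mathbf{u}_{I}^{n+1}+\mathbf{\tilde{L}}^{I,B}h\mathbf{g}=\mathbf{\tilde{L}}\mathbf{u}_{M}^{n+1}+\mathbf{\tilde{L}}^{I,B}(\hat{\mathbf{u}}_{B}-\mathbf{u}_{B}^{n+1})$, and by \eqref{eqneib} each component of $\hat{\mathbf{u}}_{B}-\mathbf{u}_{B}^{n+1}$ is $O(h^{2})$ for well-sampled and $O(h\epsilon^{1/2})$ for random data. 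Because $\mathbf{\tilde{L}}^{I,B}$ carries the usual $O(\epsilon^{-1})$ operator scaling and acts only through the $R$ near-boundary rows (outside the $\epsilon^{r}$-collar $\mathbf{\tilde{L}}$ coincides, up to negligible terms, with the plain DM matrix $\mathbf{L}$), this extra Neumann term is $O(h^{2}\epsilon^{-1})$ resp.\ $O(h\epsilon^{-1/2})$, is supported near $\partial M$, and hence is already absorbed in the bound of Lemma~\ref{theorem1}. Inserting the temporal expansion $u_{j}^{n}=u_{j}^{n+1}-\Delta t\,u_{t}(x_{j},t_{n+1})+\tfrac{\Delta t^{2}}{2}u_{tt}(x_{j},\eta)$ and using $u_{t}=\mathcal{L}u+f$ on $M$, the residual at an interior $j$ becomes
\[
T_{j}^{n+1}=-\tfrac{\Delta t}{2}u_{tt}(x_{j},\eta)+\big(\mathcal{L}u(x_{j},t_{n+1})-(\mathbf{\tilde{L}}\mathbf{u}_{M}^{n+1})_{j}\big)+(\text{Neumann cross-term}),
\]
with the middle term controlled by Lemma~\ref{theorem1}. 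For rows of $X^{h}$ equal to an interior ghost point $\tilde{x}_{b,0}\notin M$ (relevant only for random data) one first replaces $\tilde{x}_{b,0}$ by $\gamma_{b}(h)\in M$; by $C^{1}$-regularity on $B_{\epsilon^{r}}(\partial M)$ this perturbs $u$, $u_{t}$, $f$ by $O(h\sqrt{\epsilon})$, which is dominated by the terms already present.

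For randomly sampled data (Part~2) the claim now follows by taking maximum norms: $\|u_{tt}\|_{\infty}<\infty$ makes the temporal remainder $O(\Delta t)$, and the remaining terms are exactly the uniform GPDM bound of Lemma~\ref{theorem1}, Item~2, so $\|\mathbf{T}^{n}\|_{\infty}=O(\Delta t,h\epsilon^{-1/2},h^{2}\epsilon^{-3/2},\epsilon,\bar{N}^{-1/2}\epsilon^{-(1/2+d/4)})$, which is \eqref{eqn:Tnlinf}.

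For well-sampled data (Part~1)---the part that genuinely requires care---I would instead pass to the $\ell^{2}$ norm and split the interior index set accordingly. For the $N-B-R$ points outside the $\epsilon^{r}$-collar, $\mathbf{\tilde{L}}$ coincides with the plain DM matrix so $|T_{j}^{n+1}|=O(\Delta t,\epsilon,\bar{N}^{-1/2}\epsilon^{-(1/2+d/4)})$ by Lemma~\ref{thm_LK}; for the $R$ near-boundary points there is the additional $O(h^{2}\epsilon^{-1})$ coming from the ghost-value estimate and the Neumann cross-term (the $\mathbf{L}^{h}-\mathbf{L}$ discrepancy is absent here since $X^{h}=X$ for well-sampled data). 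Using $\|u_{tt}\|_{\ell^{2}}<\infty$,
\[
\|\mathbf{T}^{n+1}\|_{\ell^{2}}^{2}\lesssim\Delta t^{2}+\epsilon^{2}+\bar{N}^{-1}\epsilon^{-(1+d/2)}+\frac{R}{N}\,h^{4}\epsilon^{-2},
\]
and $R=O(N^{1/2})$ turns the last term into $O(N^{-1/2}h^{4}\epsilon^{-2})$; taking square roots gives \eqref{Eqn:Tnl2}. The main obstacle is precisely this bookkeeping: verifying that every $h$-dependent contribution to $T_{j}^{n+1}$---the ghost-value estimate \eqref{ugerror} and, for Neumann, the boundary-condition mismatch \eqref{eqneib}---is both of size $O(h^{2}\epsilon^{-1})$ and supported on only $O(N^{1/2})$ rows, which is exactly what the $\ell^{2}$ topology together with the hypothesis $R=O(N^{1/2})$ is designed to exploit.
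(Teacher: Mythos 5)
Your proposal is correct and follows essentially the same route as the paper's proof: a temporal Taylor expansion combined with rewriting the scheme's operator as $\mathbf{\tilde{L}}\mathbf{u}_{M}^{n+1}$ plus a boundary-condition mismatch controlled by \eqref{eqneib} (absent for Dirichlet, of order $h^{2}\epsilon^{-1}$ resp.\ $h\epsilon^{-1/2}$ for Neumann), with the interior error handled by Lemma~\ref{theorem1}. The concluding steps also match: the split of the $\ell^{2}$ sum into the $R=O(N^{1/2})$ near-boundary rows versus the remaining rows for well-sampled data, and a direct maximum-norm bound for randomly sampled data.
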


\begin{proof}
We will prove only the Neumann condition. The proof for Dirichlet problem
follows exactly the same argument and is slightly simpler. We also use the notation
$X \equiv \{x_j^\gamma\}_{j=1}^N := \{x_{1},\ldots ,\gamma _{b}(h),\ldots ,x_{N}\} \subseteq M$ for the ease of the following discussion. Then, by definition of
the local truncation error on $x_{j}\in X^h$, one can deduce that,
\begin{eqnarray}
T_{j}^{n+1}&:= &\frac{u_{j}^{n+1}-u_{j}^{n}}{\Delta t}-\big((\mathbf{\tilde{L%
}}^{I,I}+\mathbf{\tilde{L}}^{I,B}\mathbf{E}^{B,I})\mathbf{u}_{I}^{n+1}\big)%
_{j}-f_{j}^{n+1}-(\mathbf{\tilde{L}}^{I,B}h\mathbf{g})_{j}  \notag \\
&=&(u_{t}-f)|_{(x_{j},t_{n+1})}-\frac{\Delta t}{2}u_{tt}(x_{j},\eta )-\big((%
\mathbf{\tilde{L}}^{I,I}+\mathbf{\tilde{L}}^{I,B}\mathbf{E}^{B,I})\mathbf{u}%
_{I}^{n+1}\big)_{j}-(\mathbf{\tilde{L}}^{I,B}h\mathbf{g})_{j}  \notag \\
&=&(u_{t}-\mathcal{L}u-f)|_{(x_{j}^{\gamma },t_{n+1})}-\frac{\Delta t}{2}%
u_{tt}(x_{j},\eta )+\big(\mathcal{L}u^{n+1}|_{x_{j}^{\gamma }}-\big(\mathbf{(%
}\mathbf{\tilde{L}}^{I,I}+\mathbf{\tilde{L}}^{I,B}\mathbf{E}^{B,I})\mathbf{u}%
_{I}^{n+1}\big)_{j}-(\mathbf{\tilde{L}}^{I,B}h\mathbf{g})_{j}\big)+O(h\sqrt{%
\epsilon })  \notag \\
&=&-\frac{\Delta t}{2}u_{tt}(x_{j},\eta )+\left( \mathcal{L}%
u^{n+1}|_{x_{j}^{\gamma }}-\big(\mathbf{\tilde{L}}\mathbf{u}_{M}^{n+1}\big)%
_{j}\right) +\left( \big(\mathbf{\tilde{L}}\mathbf{u}_{M}^{n+1}\big)_{j}-%
\big(\mathbf{(}\mathbf{\tilde{L}}^{I,I}+\mathbf{\tilde{L}}^{I,B}\mathbf{E}%
^{B,I})\mathbf{u}_{I}^{n+1}\big)_{j}-(\mathbf{\tilde{L}}^{I,B}h\mathbf{g}%
)_{j}\right) +O(h\sqrt{\epsilon })  \notag \\
&=&-\frac{\Delta t}{2}u_{tt}(x_{j},\eta )+\left( \mathcal{L}%
u^{n+1}|_{x_{j}^{\gamma }}-\big(\mathbf{\tilde{L}}\mathbf{u}_{M}^{n+1}\big)%
_{j}\right) +\big(\mathbf{\tilde{L}}^{I,B}\mathbf{u}_{B}^{n+1}-\mathbf{%
\tilde{L}}^{I,B}\mathbf{E}^{B,I}\mathbf{u}_{I}^{n+1}-\mathbf{\tilde{L}}%
^{I,B}(\mathbf{U}_{B}^{n+1}-\mathbf{E}^{B,I}\mathbf{U}_{I}^{n+1})\big)%
_{j}+O(h\sqrt{\epsilon })  \notag \\
&=&-\frac{\Delta t}{2}u_{tt}(x_{j},\eta )+\big(\mathcal{L}%
u^{n+1}|_{x_{j}^{\gamma }}-\mathbf{\tilde{L}}\mathbf{u}_{M}^{n+1}\big)_{j}+%
\big(\mathbf{\tilde{L}}^{I,B}h\mathbf{B}(\mathbf{u}_{M}^{n+1}-\mathbf{U}%
_{M}^{n+1})\big)_{j}+O(h\sqrt{\epsilon })  \notag \\
&=&\left\{
\begin{array}{ll}
-\frac{\Delta t}{2}u_{tt}(x_{j},\eta )+O(h^{2}\epsilon ^{-1},\epsilon ,\bar{N%
}^{-1/2}\epsilon ^{-(1/2+d/4)})+O(h^{2}\epsilon ^{-1}), & \text{well-sampled
data,} \\
-\frac{\Delta t}{2}u_{tt}(x_{j},\eta )+O(h\epsilon ^{-1/2},h^{2}\epsilon
^{-3/2},\epsilon ,\bar{N}^{-1/2}\epsilon ^{-(1/2+d/4)})+O(h\epsilon
^{-1/2}), & \text{randomly-sampled data,}%
\end{array}%
\right. \label{eqn51}
\end{eqnarray}%
where $t_{n}<\eta <t_{n+1}$. %
In the second equality above, we employ the Taylor expansion with respect to time as in the proof
of Lemma~\ref{thmconsis}. In the third line, we have used \eqref{eqngamtid} to account for errors between
$\gamma_b(h) \in M$ and interior ghosts $\tilde{x}_{b,0}$, which is only applied to the randomly sampled case.
In the fourth line, we employ the PDE in \eqref{Eqn:utbf} on points $x_j^\gamma \in X \subseteq M$.
In the fifth line, we use the definitions of $%
\mathbf{\tilde{L}}$ in \eqref{eqnLUM} and $\mathbf{E}^{B,I}$ in \eqref{NeumannD}.
In the sixth line, we use the derivation in \eqref{eqneib}. In the last
lines,
we  use the
consistency of GPDM in Lemma~\ref{theorem1} for the second error term. In this line, we also use
\eqref{eqneib} for the third error term, in particular, this third
error term is on order of $h^{2}\epsilon ^{-1}$ for well-sampled data whereas
is on order of $h\epsilon ^{-1/2}$ for randomly sampled data. The error rate of $O(h\sqrt{\epsilon})$
can be ignored since this term is only applied to the randomly sampled case and is always smaller than $O(h\epsilon^{-1/2})$ in \eqref{eqn51}.

For well-sampled data, the $\ell^2$-error bound is given as,
{\begin{eqnarray}
\|\mathbf{T}^n\|_{\ell^2} &=& \left[\frac{1}{N-B}\Big(\sum_{j=1}^R (T_j^n)^2 +
\sum_{j=R+1}^{N-B} (T_j^n)^2 \Big)\right]^{1/2}  \notag \\
&=& \Big( c_1\frac{R}{N-B}  h^4\epsilon^{-2} + c_2\frac{%
N-B-R}{N-B}\epsilon^2 + O\big(\Delta t^2,(\bar{N}^{-1/2}%
\epsilon^{-(1/2+d/4)})^2\big) \Big)^{1/2},  \notag \\
&\leq &  c_1' N^{-1/4}h^2 \epsilon^{-1} + O(\epsilon,\Delta t,\bar{N}^{-1/2}\epsilon^{-(1/2+d/4)}),
\notag
\end{eqnarray}}

for some constants $c_1, c_2$ that are independent of $h,\epsilon, N$.
To obtain the last equality, we have used the fact that $R = O(N^{1/2})$ as $N\to \infty$. In the last term,
we also use the $\ell^2$-norm boundedness of $u_{tt}$ for  well-sampled data as in the assumption so that the order-$\Delta t$  absorbs the $\|u_{tt}\|_{\ell^2}<\infty$ term.

For the randomly-sampled data, Eq.~\eqref{eqn:Tnlinf} is immediate by taking the maximum norm.
\end{proof}

For well-sampled data, we numerically set $\epsilon=O(h^2)$ for convergence solutions. This choice of parameter implies that each of the $R$ interior points that is within $\epsilon^r$, $0<r<1/2$, contributes to error of order-1 . Recall that in the standard diffusion, the error of these interior points are of order-$\epsilon^{-1/2}$ (see \cite{coifman2006diffusion}) unless Neumann condition is imposed. With the order-1 pointwise error near the boundary, we can only theoretically establish the convergence in $\ell^2$-sense  under the condition that $ R \leq  C N^{1/2}$ as $N\to\infty$ for some constant $C>0$, which motivates the first result in the Lemma above. For this well-sampled data we numerically find that the  convergence rate is faster than the last two error terms, $O(N^{-1/4}h^2\epsilon^{-1},\bar{N}^{-1/2}\epsilon^{-(1/2+d/4)})$ in \eqref{Eqn:Tnl2}.

On the other hand, we can consider the uniform convergence
for the randomly-sampled data. This is because the parameter $\epsilon^2=O(h)$ for convergence solutions is rather small (see Section~\ref{numericrandom}). In light of this, the leading error term in \eqref{eqn:Tnlinf} will be of order-$\epsilon$, which we shall verify numerically.

Next, we will study the stability of the numerical scheme in \eqref{schemeneumann}.

\begin{lem}
\label{lemstabNeubou} (Stability for Neumann problem \eqref{schemeneumann})
Define {$\mathbf{N}=\mathbf{\tilde{L}}^{I,I}+\mathbf{\tilde{L}}^{I,B}\mathbf{%
E}^{B,I}$ in the Neumann problem }\eqref{schemeneumann}.
\begin{enumerate}
\item For any $\Delta t>0$ and any integer $N$, the maximum norm of $(\mathbf{I%
}-\Delta t\mathbf{N})^{-1}$ can be bounded by \newline
\begin{equation}
\Vert (\mathbf{I}-\Delta t\mathbf{N})^{-1}\Vert _{\infty }\leq 1.
\label{lembound1}
\end{equation}
\item Let $R$ be the number of interior points of $X^{h}$ with distance within
$\epsilon ^{r}$, $0<r<1/2$, to the boundary $\partial M$ and assume that $%
R=O(\sqrt{N})$. Then, for sufficiently large $N$ and sufficiently small $%
\Delta t$, there exists a positive constant $C$ such that
\begin{equation}
\Vert (\mathbf{I}-\Delta t\mathbf{N})^{-1}\Vert _{2}\leq 1+C\Delta t.
\label{negdefLtilde}
\end{equation}
Here $\|\cdot\|_2$ denotes the usual matrix 2-norm.
\end{enumerate}
\end{lem}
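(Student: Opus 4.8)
The plan is to mimic the proof of Lemma~\ref{lemstab}: show $\mathbf{I}-\Delta t\mathbf{N}$ is strictly diagonally dominant by rows with dominance excess exactly one, then invoke the Ahlberg--Nilson--Varah bound \cite{ahlberg1963convergence,varah1975lower}. The one new ingredient is that $\mathbf{N}$ inherits the graph-Laplacian sign pattern, which I would get from three observations. (i) \emph{Zero row sums:} the extrapolation \eqref{Eqn:uvv_g2} reproduces constants, so $\mathbf{G}\mathbf{1}=\mathbf{1}$; hence $\mathbf{\tilde L}\mathbf{1}=\mathbf{L}^{h}(\mathbf{1},\mathbf{G}\mathbf{1})=\mathbf{L}^{h}\mathbf{1}=\mathbf{0}$ by the zero row sums of $\mathbf{L}^h$, and since $\mathbf{E}^{B,I}\mathbf{1}=\mathbf{1}$ also $\mathbf{N}\mathbf{1}=(\mathbf{\tilde L}\mathbf{1})_{I}=\mathbf{0}$. (ii) \emph{Nonnegative off-diagonal entries.} This is the delicate point: $\mathbf{\tilde L}^{I,I}$ alone has \emph{negative} off-diagonal entries, since the interior-ghost column of $\mathbf{G}$ carries the backward-difference coefficients $-k$, giving the entry $L^{h}_{i,i_0(b)}-\sum_{k}k\,L^{h}_{i,(b,k)}$ in the column of $\tilde x_{b,0}$ (writing $i_0(b)$ for the index of $\tilde x_{b,0}$ in $X^h$). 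However, $\mathbf{N}=\mathbf{\tilde L}^{I,I}+\mathbf{\tilde L}^{I,B}\mathbf{E}^{B,I}$ is exactly the operator obtained after substituting the Neumann relation $U_{b}=U_{b,0}+hg_{b}$ into \eqref{Eqn:uvv_g2} (which collapses every exterior-ghost value to $U_{b,0}$ plus a known $g$-term), and this adds back $L^{h}_{i,b}+\sum_{k}(k+1)L^{h}_{i,(b,k)}$ from the boundary column, so that $N_{i,i_0(b)}=L^{h}_{i,i_0(b)}+L^{h}_{i,b}+\sum_{k}L^{h}_{i,(b,k)}\ge 0$; every remaining off-diagonal entry of $\mathbf{N}$ coincides with the corresponding nonnegative entry of $\mathbf{L}^h$. (iii) Items (i) and (ii) force $N_{ii}=-\sum_{j\ne i}N_{ij}\le 0$. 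Consequently $\mathbf{I}-\Delta t\mathbf{N}$ has positive diagonal, nonpositive off-diagonal, and $|1-\Delta tN_{ii}|-\Delta t\sum_{j\ne i}|N_{ij}|=1-\Delta t\sum_{j}N_{ij}=1$ for every $i$, so Ahlberg--Nilson--Varah gives $\|(\mathbf{I}-\Delta t\mathbf{N})^{-1}\|_{\infty}\le 1$.

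\textbf{Part 2.} For the $2$-norm I would estimate the smallest singular value of $\mathbf{I}-\Delta t\mathbf{N}$. With $\mathbf{N}_{s}:=\tfrac12(\mathbf{N}+\mathbf{N}^{\top})$, for any unit vector $\mathbf{v}$,
\[
\|(\mathbf{I}-\Delta t\mathbf{N})\mathbf{v}\|^{2}=1-2\Delta t\,\mathbf{v}^{\top}\mathbf{N}_{s}\mathbf{v}+\Delta t^{2}\|\mathbf{N}\mathbf{v}\|^{2}\ge 1-2\Delta t\,\lambda_{\max}(\mathbf{N}_{s}),
\]
hence $\|(\mathbf{I}-\Delta t\mathbf{N})^{-1}\|_{2}\le(1-2\Delta t\,\lambda_{\max}(\mathbf{N}_{s}))^{-1/2}\le 1+C\Delta t$ once $\Delta t$ is small, \emph{provided} one proves the uniform bound $\lambda_{\max}(\mathbf{N}_{s})\le C_{0}$, i.e.\ $\mathbf{v}^{\top}\mathbf{N}\mathbf{v}\le C_{0}\|\mathbf{v}\|^{2}$ with $C_{0}$ independent of $N,\epsilon,h$. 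This uniform quadratic-form bound, valid for $N$ large, is the whole content of Part~2 and is where the hypothesis $R=O(\sqrt N)$ is used.

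\textbf{Bounding the quadratic form.} I would extend $\mathbf{v}\in\mathbb{R}^{N-B}$ to $\hat{\mathbf v}\in\mathbb{R}^{\bar N}$ by the homogeneous-Neumann ghost rule (give $x_b$ and every exterior ghost $x_{b,k}$ the value of the interior ghost $\tilde x_{b,0}$), so that $(\mathbf{N}\mathbf{v})_{i}=(\mathbf{L}^{h}\hat{\mathbf v})_{i}$ on interior rows; using the zero row sums of $\mathbf{L}^h$ this turns $\mathbf{v}^{\top}\mathbf{N}\mathbf{v}$ into the weighted sum of $v_i(\hat v_l-v_i)$ against the Gaussian weights. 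Splitting the kernel into its symmetric Gaussian factor and the advection factor $1-\tfrac12\langle A,\,\cdot-\cdot\rangle+O(\epsilon)$: the symmetric factor would produce the usual nonpositive Dirichlet energy $-\tfrac12\sum(\cdots)(v_i-v_l)^{2}$ if the sum ran over all rows, so the cost of restricting to interior rows is a correction carried by the $R$ rows within $\epsilon^{r}$ of $\partial M$, which I would bound by the crude entrywise estimates on the near-boundary rows of $\mathbf{N}$ and absorb via $R=O(\sqrt N)$ into an $o(1)$ term as $\bar N\to\infty$; the advection factor is odd in the displacement, so in the term proportional to $v_i^{2}$ the first Gaussian moment vanishes to $O(\epsilon)$, cancelling the $\epsilon^{-1}$ prefactor and leaving only the bounded term consistent with the $O(\epsilon)$-accurate approximation of $a\cdot\nabla_{g}$, while the remaining cross term is controlled by Cauchy--Schwarz against the nonpositive Dirichlet energy. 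Collecting these gives $\mathbf{v}^{\top}\mathbf{N}\mathbf{v}\le C_{0}\|\mathbf{v}\|^{2}$ for $N$ large.

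\textbf{Main obstacle.} The hard part is precisely this uniform bound on $\lambda_{\max}(\mathbf{N}_s)$: one must verify (a) that the advection shift in the kernel perturbs the quadratic form only at $O(1)$ rather than $O(\epsilon^{-1})$, and (b) that the ghost-point amplification near $\partial M$ together with the interior-only restriction contribute a vanishing amount once $R=O(\sqrt N)$ and $\epsilon\to 0$ after $\bar N\to\infty$ — the same near-boundary bookkeeping that produced the $\ell^{2}$ consistency in Lemma~\ref{thmconsis2}. A possible shortcut avoiding $\mathbf{N}_s$: by Part~1, $(\mathbf{I}-\Delta t\mathbf{N})^{-1}$ is nonnegative and row-stochastic, so $\|(\mathbf{I}-\Delta t\mathbf{N})^{-1}\|_{2}^{2}\le\|(\mathbf{I}-\Delta t\mathbf{N})^{-1}\|_{1}$, and a maximum-principle argument on the column-sum vector $\mathbf{s}=\mathbf{1}+\Delta t\,\mathbf{N}^{\top}\mathbf{s}$ yields $\|\mathbf{s}\|_{\infty}\le(1-\Delta tC_{0})^{-1}$ as soon as the \emph{column} sums of $\mathbf{N}$ — a discrete analogue of the adjoint action $\mathcal{L}^{*}1=-\operatorname{div}(a)$ — are bounded uniformly, which is again the step where $R=O(\sqrt N)$ must enter.
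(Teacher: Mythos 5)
Your Part~1 is correct and is essentially the paper's own argument: you establish the zero row sums of $\mathbf{N}$ via $\mathbf{G}\mathbf{1}=\mathbf{1}$ and the zero row sums of $\mathbf{L}^{h}$, and you identify the key cancellation $N_{i,i_{0}(b)}=L^{h}_{i,i_{0}(b)}+L^{h}_{i,b}+\sum_{k}L^{h}_{i,(b,k)}\geq 0$, which is exactly the paper's observation that $\mathbf{\tilde{G}}^{G,I}:=\mathbf{G}^{G,I}+\mathbf{G}^{G,B}\mathbf{E}^{B,I}$ has all entries equal to $0$ or $1$; Ahlberg--Nilson--Varah then gives \eqref{lembound1}. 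Nothing is missing there.

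Part~2 contains a genuine gap. Both of your reductions are valid in principle --- the main one via $\lambda_{\max}(\mathbf{N}_{s})$, and the ``shortcut'' via $\Vert A\Vert_{2}^{2}\leq\Vert A\Vert_{1}\Vert A\Vert_{\infty}$, the latter being precisely the paper's strategy (it shows $\mathbf{I}-\Delta t\mathbf{N}^{\top}$ is SDD, gets $\Vert(\mathbf{I}-\Delta t\mathbf{N})^{-1}\Vert_{1}\leq(1-C_{0}\Delta t)^{-1}$, and interpolates with Part~1). But in both routes you stop exactly where the work begins, and you say so yourself. The required uniform bound --- on $\mathbf{v}^{\top}\mathbf{N}\mathbf{v}$ or equivalently on the column sums of $\mathbf{N}$ --- is not a routine bookkeeping step. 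The paper splits $(\mathbf{N}\mathbf{1}-\mathbf{N}^{\top}\mathbf{1})_{i}$ into three blocks; for the interior block it needs a genuine concentration argument (a Chernoff bound comparing $\epsilon^{-d/2}K_{\epsilon}(x_{j},\cdot)$ with its transpose) to show that the row-minus-column discrepancy of $\mathbf{K}^{I,I}$, after the $\epsilon^{-1}$ rescaling, converges in high probability to $-\tfrac{1}{2}\mathrm{div}\,a(x_{i})$ and is hence $O(1)$. Your remark that ``the first Gaussian moment vanishes to $O(\epsilon)$'' is the continuum version of this cancellation, but it does not by itself control the Monte-Carlo fluctuation of the discrete sums, which is where the bandwidth scaling enters. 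For the boundary and ghost blocks the paper combines the counting $B,R=O(\sqrt{N})$ with the specific choice $\epsilon\sim N^{-2/(d+6)}$ to obtain $\epsilon^{-d/2-1}(B+R)K/N=O(1)$ for $d\leq 2$; note this is only $O(1)$, not the $o(1)$ you claim when you say the near-boundary correction can be ``absorbed into an $o(1)$ term,'' and for $d\geq 3$ the same bound diverges, so the dimension restriction is load-bearing. As written, your Part~2 is a correct reduction plus an honest admission that the key quantitative estimate is missing; supplying that estimate is the content of the paper's Appendix~\ref{App:B}.
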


See Appendix~\ref{App:B} for the proof.
\begin{lem}
\label{lemstabDir} (Stability for Dirichlet problem \eqref{schemedirichlet})
Define $\mathbf{H}\equiv \mathbf{\tilde{L}}^{I,I}$ in the Dirichlet problem %
\eqref{schemedirichlet}. Let $R$ be the number of interior points of $X^{h}$
with distance within $\epsilon ^{r}$, $0<r<1/2$, to the boundary $\partial M$
and assume that $R=O(\sqrt{N})$. Then, for sufficiently large $N$ and
sufficiently small $\Delta t$, there exists positive constants $C_{1}$ and $%
C_{2}$\ such that

\begin{enumerate}
\item the maximum norm of $(\mathbf{I}-\Delta t\mathbf{H})^{-1}$ can be
bounded by \newline
\begin{equation}
\Vert (\mathbf{I}-\Delta t\mathbf{H})^{-1}\Vert _{\infty }\leq 1+C_{1}\Delta
t,  \label{lembounddir2}
\end{equation}

\item and the usual matrix 2-norm, $\Vert \cdot \Vert _{2}$, of $(\mathbf{I}%
-\Delta t\mathbf{N})^{-1}$ can be bounded by
\begin{equation}
\Vert (\mathbf{I}-\Delta t\mathbf{H})^{-1}\Vert _{2}\leq 1+C_{2}\Delta t.
\label{negdefdir2}
\end{equation}
\end{enumerate}
\end{lem}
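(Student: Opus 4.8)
The plan is to adapt the argument for the Neumann problem (Lemma~\ref{lemstabNeubou}), adjusting for the fact that the Dirichlet block $\mathbf{H}=\tilde{\mathbf{L}}^{I,I}$ no longer enjoys the exact cancellation that made $\mathbf{I}-\Delta t\mathbf{N}$ an inverse M-matrix; this is precisely why the bounds now carry the (harmless) corrections $C_i\Delta t$. First I would record the structural facts. The matrix $\mathbf{L}^h$ built as in \eqref{classicDMmatrix} has zero row sums, strictly negative diagonal on the $X^h$-block, and nonnegative off-diagonal entries; since the extrapolation \eqref{Eqn:uvv_g2} reproduces constants we have $\mathbf{G}\mathbf{1}_N=\mathbf{1}_{BK}$, hence $\tilde{\mathbf{L}}\mathbf{1}_N=\mathbf{L}^h\mathbf{1}_{\bar N}=\mathbf{0}$. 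Because $\mathbf{L}^{(1)}$ has nonnegative off-diagonal entries, $\mathbf{L}^{(2)}\ge\mathbf{0}$, and the boundary columns of $\mathbf{G}$ carry the positive coefficients $k+1$ from \eqref{Eqn:uvv_g2}, the boundary columns of $\tilde{\mathbf{L}}$ are nonnegative, whence $\mathbf{H}\mathbf{1}_{N-B}=-\tilde{\mathbf{L}}^{I,B}\mathbf{1}_B\le\mathbf{0}$. The only entries of $\mathbf{H}$ that can violate the M-matrix sign pattern lie in the columns indexed by the interior ghosts $\tilde x_{b,0}$, to which $\mathbf{L}^{(2)}\mathbf{G}$ contributes $-\sum_{k=1}^{K}k\,(\mathbf{L}^{(2)})_{i,(b,k)}$, and these are supported only on the $R=O(\sqrt N)$ interior rows within $\epsilon^r$ of $\partial M$. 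I would therefore split $\mathbf{H}=\mathbf{M}-\mathbf{P}$, with $\mathbf{P}\ge\mathbf{0}$ collecting the magnitudes of exactly these negative off-diagonal entries (supported on $\le R$ rows and $\le B$ columns) and $\mathbf{M}$ having nonnegative off-diagonals, strictly negative diagonal, and $\mathbf{M}\mathbf{1}=\mathbf{H}\mathbf{1}+\mathbf{P}\mathbf{1}\le\mathbf{P}\mathbf{1}$.

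For \eqref{lembounddir2}, the matrix $\mathbf{I}-\Delta t\mathbf{M}$ is strictly diagonally dominant with row excess $|1-\Delta t M_{ii}|-\Delta t\sum_{j\neq i}|M_{ij}|=1-\Delta t(\mathbf{M}\mathbf{1})_i\ge 1-\Delta t\|\mathbf{P}\|_\infty$, so the Ahlberg--Nilson--Varah bound \cite{ahlberg1963convergence,varah1975lower} gives $\|(\mathbf{I}-\Delta t\mathbf{M})^{-1}\|_\infty\le(1-\Delta t\|\mathbf{P}\|_\infty)^{-1}$. Writing $\mathbf{I}-\Delta t\mathbf{H}=(\mathbf{I}-\Delta t\mathbf{M})\big(\mathbf{I}+\Delta t(\mathbf{I}-\Delta t\mathbf{M})^{-1}\mathbf{P}\big)$ and expanding the second factor in a Neumann series then yields $\|(\mathbf{I}-\Delta t\mathbf{H})^{-1}\|_\infty\le 1+C_1\Delta t$ once $\Delta t\|\mathbf{P}\|_\infty$ is below a fixed threshold, with $C_1$ controlled by $\|\mathbf{P}\|_\infty$.

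For \eqref{negdefdir2} I would argue through the numerical range: if $\tfrac12(\mathbf{A}+\mathbf{A}^\top)\preceq\mu\mathbf{I}$ with $\mu<1$, then $\|(\mathbf{I}-\mathbf{A})x\|\ge(1-\mu)\|x\|$ for every $x$, so $\|(\mathbf{I}-\mathbf{A})^{-1}\|_2\le(1-\mu)^{-1}$; taking $\mathbf{A}=\Delta t\mathbf{H}$, it suffices to show $\lambda_{\max}\big(\tfrac12(\mathbf{H}+\mathbf{H}^\top)\big)\le C$ for a constant $C$. The symmetrization of $\mathbf{L}^{(1)}$ is, up to an $O(1)$ symmetric correction from the drift and the geometry, the graph Laplacian of a nonnegative weight matrix, hence negative semidefinite, so $\lambda_{\max}(\mathrm{sym}\,\mathbf{L}^{(1)})\le C$; by Cauchy's interlacing theorem for principal submatrices, $\lambda_{\max}(\mathrm{sym}\,(\mathbf{L}^{(1)})^{I,I})\le C$ as well. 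Adding back the ghost correction $(\mathbf{L}^{(2)}\mathbf{G})^{I,I}$, which is supported on $\le R$ rows and columns, as a $\|\cdot\|_2$-bounded perturbation and invoking Weyl's inequality gives $\lambda_{\max}(\mathrm{sym}\,\mathbf{H})\le C$, and \eqref{negdefdir2} follows with $C_2=O(C)$ for $\Delta t$ small.

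The crux of both items is the uniform-in-$(N,\epsilon,h)$ control of the ghost correction: $\|\mathbf{P}\|_\infty$ for \eqref{lembounddir2} and $\|(\mathbf{L}^{(2)}\mathbf{G})^{I,I}\|_2$ for \eqref{negdefdir2}. For this I would use that $\|x_i-\tilde x_{b,k}\|^2\gtrsim\mathrm{dist}(x_i,\partial M)^2+k^2h^2$, since the layers $\tilde x_{b,k}$ recede from $M$ at normal spacing $h$; the resulting Gaussian decay in $k$ makes $\sum_k k\,K_\epsilon(x_i,\tilde x_{b,k})$ comparable to the neighbouring interior kernel weights, while summation over $b$ is a Riemann sum over the $(d-1)$-dimensional boundary against a Gaussian of width $\sqrt\epsilon$; combined with $Kh=O(\epsilon^r)$ and the well-sampledness of the cloud, this is where the hypothesis $R=O(\sqrt N)$ is genuinely used, and it is the one step I expect to require real care. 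It is also the estimate shared with Lemma~\ref{lemstabNeubou}, whose proof is deferred to Appendix~\ref{App:B}.
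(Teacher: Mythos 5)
Your proposal is correct in substance, and it is worth separating the two items. For \eqref{lembounddir2} you and the paper do essentially the same thing in different packaging: the paper shows directly that $\mathbf{I}-\Delta t\mathbf{H}$ is SDD by writing $\mathbf{H}=\mathbf{L}^{I,I}+\mathbf{L}^{I,G}\mathbf{G}^{G,I}$, using the nonpositive row sums of $\mathbf{L}^{I,I}$ and absorbing $\sum_{j\neq i}|(\mathbf{L}^{I,G}\mathbf{G}^{G,I})_{ij}|\leq C_0$ into the Ahlberg--Nilson--Varah row excess, which is exactly your $\|\mathbf{P}\|_\infty$ bound; your M-matrix split plus Neumann series is an equivalent reorganization and buys nothing extra, but loses nothing either. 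For \eqref{negdefdir2} you genuinely diverge: the paper repeats the SDD argument for $\mathbf{I}-\Delta t\mathbf{H}^{\top}$ (reusing the Chernoff/divergence estimate and the $(B+R)$ column counts from the Neumann appendix) to get a $\|\cdot\|_1$ bound and then interpolates via $\Vert A\Vert_2\leq(\Vert A\Vert_1\Vert A\Vert_\infty)^{1/2}$, whereas you bound the numerical range by showing $\lambda_{\max}(\tfrac12(\mathbf{H}+\mathbf{H}^{\top}))\leq C$ through the graph-Laplacian structure of the symmetrized kernel, Cauchy interlacing, and Weyl. Your route is arguably cleaner conceptually (it explains \emph{why} the resolvent is nearly a contraction in $\ell^2$), but note that it does not avoid the hard analysis: the "$O(1)$ symmetric correction from the drift" is precisely the term-(I) Chernoff estimate of $\frac{\epsilon^{-d/2-1}}{m_0\bar N}(\mathbf{K}^{\top}\mathbf{1}-\mathbf{K}\mathbf{1})$, and your $\Vert(\mathbf{L}^{(2)}\mathbf{G})^{I,I}\Vert_2$ bound still has to pass through $(\Vert\cdot\Vert_1\Vert\cdot\Vert_\infty)^{1/2}$ with the column-sum count $R=O(\sqrt N)$, so both proofs ultimately rest on the same Appendix-B estimates and the same $\epsilon\sim N^{-2/(d+6)}$ scaling. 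One small correction: for the row-sum bound $\Vert\mathbf{P}\Vert_\infty\leq C$ the relevant counting parameters are $B$ and $K$ (each interior row sees at most $BK$ ghost columns), not $R$; the hypothesis $R=O(\sqrt N)$ is genuinely needed only where columns are summed, i.e., in the transpose/2-norm part.
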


See Appendix~\ref{App:C} for the proof.

\comment{
For Dirichlet problem, one can follow similar steps as in the Neumann case.
{\color{red} I suspect that the statement of (49) is slightly changed for Dirichlet; you don't have the property 1) in the Appendix and the row sum property 2) is strictly negative. So for some small $\Delta t$, (49) holds. Eq. (57) may not hold.
}}

Using the consistency in Lemma {\ref{thmconsis2}}\ and stability in Lemma {%
\ref{lemstabNeubou}} or Lemma \ref{lemstabDir}, we immediately have the convergence.

\begin{thm}
\label{convergewithbound} Under the same hypotheses of the previous Lemma {%
\ref{thmconsis2} and Lemma \ref{lemstabNeubou}} for Neumann case (or Lemma \ref{lemstabDir} for Dirichlet case), then:
\begin{enumerate}
\item For well-sampled data, there exists constants $C_{1}$ and $C_{2}$
such that
\BEA
\max_{0\leq p\leq n}\Vert \mathbf{u}_{M}^{p}-\mathbf{U}_{M}^{p}\Vert _{\ell
^{2}}&\leq& C_{1}\exp (C_{2}n\Delta t)\max_{0\leq p \leq n}\Vert \mathbf{T}%
^{p}\Vert _{\ell ^{2}}+O(B^{1/2}h^{2}N^{-1/2})\notag \\ &=&O\left( \Delta
t,B^{1/2}h^{2}N^{-1/2},\epsilon,N^{-1/4}h^2\epsilon^{-1} ,\bar{N}^{-1/2}\epsilon ^{-(1/2+d/4)}\right) ,\notag
\EEA
in high probability, as $\Delta t\rightarrow 0$ and $\epsilon \rightarrow 0$
after $\bar{N}\rightarrow \infty $ and $h\to 0$. Here, the second error bound of order-$B^{1/2}h^2N^{-1/2}$ only applies for the Neumann case and does not apply for the Dirichlet case.
\item For randomly sampled data, one has%
\begin{equation*}
\max_{0\leq p\leq n}\Vert \mathbf{u}_{M}^{p}-\mathbf{U}_{M}^{p}\Vert _{{\infty }}\leq n\Delta t\max_{0\leq p\leq n}\Vert \mathbf{T}^{p}\Vert
_{{\infty }}=O\left( \Delta t,h\epsilon ^{-1/2},h^2\epsilon^{-3/2},\epsilon
,\bar{N}^{-1/2}\epsilon ^{-(1/2+d/4)}\right),
\end{equation*}
for Neumann problem, and
\BEA
\max_{0\leq p\leq n}\Vert \mathbf{u}_{M}^{p}-\mathbf{U}_{M}^{p}\Vert _{{\infty}}&\leq& C_{1}\exp (C_{2}n\Delta t)\max_{0\leq p \leq n}\Vert \mathbf{T}%
^{p}\Vert _{\ell ^{2}}\notag \\ &=&O\left( \Delta
t,h \epsilon^{-1/2},h^{2} \epsilon^{-3/2},\epsilon ,\bar{N}^{-1/2}\epsilon ^{-(1/2+d/4)}\right) ,\notag
\EEA
for Dirichlet problem, in high probability, as $\Delta t\rightarrow 0$ and $\epsilon \rightarrow 0$
after $\bar{N}\rightarrow \infty $ and $h\to 0$.
\end{enumerate}
\end{thm}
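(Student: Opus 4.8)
The plan is to reproduce the consistency-plus-stability argument used for the no-boundary case (Theorem~\ref{convergeencenoboundary}), but now carried out on the reduced interior systems \eqref{schemeneumann} and \eqref{schemedirichlet} and then lifted back to the full vector of unknowns. Write $\mathbf{e}_I^n:=\mathbf{U}_I^n-\mathbf{u}_I^n$ and $\mathbf{e}_B^n:=\mathbf{U}_B^n-\mathbf{u}_B^n$, where $\mathbf{u}_I^n,\mathbf{u}_B^n$ are the exact solution restricted to the interior and boundary components of $X^h$. Rearranging the definition \eqref{defN} of the truncation error shows that the exact solution satisfies $\mathbf{u}_I^{n+1}=(\mathbf{I}-\Delta t\mathbf{N})^{-1}\big(\mathbf{u}_I^{n}+\Delta t\mathbf{f}^{n+1}+\Delta t\,\mathbf{\tilde{L}}^{I,B}h\mathbf{g}+\Delta t\,\mathbf{T}^{n+1}\big)$ with $\mathbf{N}=\mathbf{\tilde{L}}^{I,I}+\mathbf{\tilde{L}}^{I,B}\mathbf{E}^{B,I}$; subtracting this from the update \eqref{schemeneumann} eliminates the forcing and boundary data and leaves the clean recursion $\mathbf{e}_I^{n+1}=(\mathbf{I}-\Delta t\mathbf{N})^{-1}\big(\mathbf{e}_I^{n}-\Delta t\,\mathbf{T}^{n+1}\big)$, and the Dirichlet case \eqref{schemedirichlet}--\eqref{defD} yields the same recursion with $\mathbf{N}$ replaced by $\mathbf{H}=\mathbf{\tilde{L}}^{I,I}$. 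Since $\mathbf{e}_I^0=0$ and the iteration matrix is time-independent, iterating gives, in any submultiplicative norm, $\|\mathbf{e}_I^{n}\|\le\Delta t\,\big(\sum_{k=1}^{n}\rho^{\,n-k+1}\big)\max_{0\le p\le n}\|\mathbf{T}^p\|$, where $\rho$ bounds $\|(\mathbf{I}-\Delta t\mathbf{N})^{-1}\|$ (respectively $\|(\mathbf{I}-\Delta t\mathbf{H})^{-1}\|$).

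Next I would specialise the norm and insert the matching stability and consistency inputs for each of the four cases. For randomly sampled data the right topology is the uniform one: Lemma~\ref{lemstabNeubou}(1) gives $\rho=1$ for the Neumann matrix, so the sum telescopes to $n$ and $\|\mathbf{e}_I^{n}\|_\infty\le n\Delta t\max_p\|\mathbf{T}^p\|_\infty$, whereas Lemma~\ref{lemstabDir}(1) gives only $\rho=1+C_1\Delta t$ for the Dirichlet matrix, so a discrete Gronwall inequality converts $\sum_k\rho^{\,n-k+1}$ into a factor $C_1'\exp(C_2 n\Delta t)$; in both cases the uniform consistency bound \eqref{eqn:Tnlinf} of Lemma~\ref{thmconsis2}(2) supplies the stated rate. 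For well-sampled data one must instead work in the $\ell^2$ topology, because the consistency estimate \eqref{Eqn:Tnl2} of Lemma~\ref{thmconsis2}(1) only controls $\|\mathbf{T}^n\|_{\ell^2}$ (the $R=O(N^{1/2})$ interior points within $\epsilon^r$ of $\partial M$ each carry $O(1)$ pointwise truncation error). The key point here is that for a square matrix the operator norm induced by $\|\cdot\|_{\ell^2}$ equals the ordinary Euclidean operator norm $\|\cdot\|_2$, since the common $1/\sqrt{N}$ scaling cancels; hence \eqref{negdefLtilde} and \eqref{negdefdir2} give $\rho=1+C\Delta t$ in the $\ell^2\to\ell^2$ norm for both $\mathbf{N}$ and $\mathbf{H}$, and the same Gronwall step yields $\|\mathbf{e}_I^{n}\|_{\ell^2}\le C_1\exp(C_2 n\Delta t)\max_p\|\mathbf{T}^p\|_{\ell^2}$.

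Finally I would pass from the interior error to the full error $\mathbf{u}_M^p-\mathbf{U}_M^p=(\mathbf{e}_I^p,\mathbf{e}_B^p)$ by estimating $\mathbf{e}_B^p$. For the Dirichlet problem this is immediate: $u(x_b,t_n)=g(x_b)=U_b^n$, so $\mathbf{e}_B^n=0$ and no extra term appears, which is why the $O(B^{1/2}h^2N^{-1/2})$ contribution is absent in the Dirichlet bound. For the Neumann problem, \eqref{eqneib} together with \eqref{NeumannD} gives $\mathbf{e}_B^n=\mathbf{E}^{B,I}\mathbf{e}_I^n+\mathbf{r}^n$ with $|r_b^n|=O(h^2)$ for well-sampled data and $|r_b^n|=O(h\epsilon^{1/2})$ for randomly sampled data; since $\mathbf{E}^{B,I}$ is a $0/1$ selection matrix, $\|\mathbf{E}^{B,I}\mathbf{e}_I^n\|\le\|\mathbf{e}_I^n\|$ in both norms, while the $B$-vector $\mathbf{r}^n$ contributes $\big(\frac{1}{N}\sum_{b=1}^{B}|r_b^n|^2\big)^{1/2}=O(B^{1/2}h^2N^{-1/2})$ in the $\ell^2$ norm (and $\|\mathbf{r}^n\|_\infty=O(h\epsilon^{1/2})$, which is dominated by the $O(h\epsilon^{-1/2})$ term already present, in the uniform norm). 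Combining the interior bound, this boundary correction, and the consistency rates of Lemma~\ref{thmconsis2} produces the four displayed estimates. I expect the only genuinely non-routine point to be this boundary-reconstruction step, together with the justification that the $2$-norm stability bounds of Lemmas~\ref{lemstabNeubou}--\ref{lemstabDir} may be paired with the $\ell^2$ consistency bound; the remainder is the standard Lax-type iteration already carried out for Theorem~\ref{convergeencenoboundary}.
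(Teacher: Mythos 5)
Your proposal is correct and follows essentially the same route as the paper's proof: the same error recursion $\mathbf{e}_I^{n+1}=(\mathbf{I}-\Delta t\mathbf{N})^{-1}(\mathbf{e}_I^{n}-\Delta t\,\mathbf{T}^{n+1})$, the same pairing of the $2$-norm stability with the $\ell^2$ consistency for well-sampled data and of the max-norm stability with the uniform consistency for random data, and the same reconstruction of the boundary error from \eqref{eqneib} yielding the $O(B^{1/2}h^{2}N^{-1/2})$ term in the Neumann case and its absence in the Dirichlet case. The only difference is presentational: you make explicit the cancellation of the $1/\sqrt{N}$ scaling that lets the matrix $2$-norm serve as the $\ell^2\to\ell^2$ operator norm, which the paper uses implicitly.
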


\begin{proof}
We only prove the Neumann condition. We first consider the well-sampled case. From (\ref%
{schemeneumann}), we obtain
\begin{eqnarray*}
\mathbf{U}_{I}^{n+1}-\mathbf{u}_{I}^{n+1} &=&(\mathbf{I-}\Delta t\mathbf{N}%
)^{-1}[(\mathbf{U}_{I}^{n}+\Delta t\mathbf{f}^{n+1}+\Delta t\mathbf{\tilde{L}%
}^{I,B}h\mathbf{g})-(\mathbf{I-}\Delta t\mathbf{N})\mathbf{u}_{I}^{n+1}] \\
&=&(\mathbf{I-}\Delta t\mathbf{N})^{-1}[\mathbf{U}_{I}^{n}+\Delta t\mathbf{f}%
^{n+1}+\Delta t\mathbf{\tilde{L}}^{I,B}h\mathbf{g}-\mathbf{u}_{I}^{n+1}+%
\mathbf{u}_{I}^{n}-\mathbf{u}_{I}^{n}+\Delta t\mathbf{Nu}_{I}^{n+1}] \\
&=&(\mathbf{I-}\Delta t\mathbf{N})^{-1}\left[ (\mathbf{U}_{I}^{n}-\mathbf{u}%
_{I}^{n})-\Delta t\left( \frac{\mathbf{u}_{I}^{n+1}-\mathbf{u}_{I}^{n}}{%
\Delta t}-\mathbf{Nu}_{I}^{n+1}-\mathbf{f}^{n+1}-\mathbf{\tilde{L}}^{I,B}h%
\mathbf{g}\right) \right]  \\
&=&(\mathbf{I-}\Delta t\mathbf{N})^{-1}[(\mathbf{U}_{I}^{n}-\mathbf{u}%
_{I}^{n})-\Delta t\mathbf{T}^{n+1}].
\end{eqnarray*}%
Define $\mathbf{e}_{I}^{n}=\mathbf{U}_{I}^{n}-\mathbf{u}_{I}^{n}$ and take
the spectral norm, we have
\begin{eqnarray*}
\Vert \mathbf{e}_{I}^{n+1}\Vert _{\ell ^{2}} &\leq &\Vert (\mathbf{I-}\Delta
t\mathbf{N})^{-1}\Vert _{2}\left[ \Vert \mathbf{e}_{I}^{n}\Vert _{\ell
^{2}}+\Delta t\Vert \mathbf{T}^{n+1}\Vert _{\ell ^{2}}\right] \leq
(1+C\Delta t)\left( \Vert \mathbf{e}_{I}^{n}\Vert _{\ell ^{2}}+\Delta t\Vert
\mathbf{T}^{n+1}\Vert _{\ell ^{2}}\right)  \\
&\leq &\left( 1+C\Delta t\right) ^{2}\Vert \mathbf{e}_{I}^{n-1}\Vert _{\ell
^{2}}+\Delta t\left( (1+C\Delta t)\Vert \mathbf{T}^{n+1}\Vert _{\ell
^{2}}+\left( 1+C\Delta t\right) ^{2}\Vert \mathbf{T}^{n}\Vert _{\ell
^{2}}\right)  \\
&\leq &\left( 1+C\Delta t\right) ^{n+1}\Vert \mathbf{e}_{I}^{0}\Vert _{\ell
^{2}}+\Delta t\sum_{p=0}^{n}\left( 1+C\Delta t\right) ^{p+1}\Vert \mathbf{T}%
^{n+1-p}\Vert _{\ell ^{2}}.
\end{eqnarray*}%
Since $\Vert \mathbf{e}_{I}^{0}\Vert _{\ell ^{2}}=0$, we have
\begin{equation*}
\Vert \mathbf{e}_{I}^{n}\Vert _{\ell ^{2}}\leq \Delta t\max_{0\leq p\leq
n}\Vert \mathbf{T}^{p}\Vert _{\ell ^{2}}\sum_{p=0}^{n-1}\left( 1+C\Delta
t\right) ^{p+1}\leq \frac{2\exp (Cn\Delta t)}{C}\max_{0\leq p\leq n}\Vert
\mathbf{T}^{p}\Vert _{\ell ^{2}}.
\end{equation*}%
for all time.

For the boundary, from (\ref{eqneib}), we have%
\begin{eqnarray*}
\sum_{b=1}^{B}(u(x_{b})-U_{b})^{2} &=&\sum_{b=1}^{B}(u(\tilde{x}%
_{b,0})-U_{b,0}+O(h^{2}))^{2}\leq 2\sum_{b=1}^{B}(u(\tilde{x}%
_{b,0})-U_{b,0})^{2}+O(h^{4}) \\
&\leq &2(N-B)\Vert \mathbf{u}_{I}^{n}-\mathbf{U}_{I}^{n}\Vert _{\ell
^{2}}^{2}+O(Bh^{4})=2(N-B)\Vert \mathbf{e}_{I}^{n}\Vert _{\ell
^{2}}^{2}+O(Bh^{4}).
\end{eqnarray*}%
Thus, for $\mathbf{e}_{M}^{n}=\mathbf{U}_{M}^{n}-\mathbf{u}_{M}^{n}$, we
have
\begin{eqnarray*}
\Vert \mathbf{e}_{M}^{n}\Vert _{\ell ^{2}} &=&\left[ \frac{1}{N}\left(
(N-B)\Vert \mathbf{e}_{I}^{n}\Vert _{\ell
^{2}}^{2}+\sum_{b=1}^{B}(u(x_{b})-U_{b})^{2}\right) \right] ^{1/2} \\
&\leq &\left[ \frac{N-B}{N}\Vert \mathbf{e}_{I}^{n}\Vert _{\ell ^{2}}^{2}+2%
\frac{N-B}{N}\Vert \mathbf{e}_{I}^{n}\Vert _{\ell ^{2}}^{2}+O(Bh^{4}N^{-1})%
\right] ^{1/2} \\
&\leq &\sqrt{3}\Vert \mathbf{e}_{I}^{n}\Vert _{\ell
^{2}}+O(B^{1/2}h^{2}N^{-1/2}).
\end{eqnarray*}
Using Lemma \ref{thmconsis2}, the proof for well-sampled data case is
completed.

For randomly sampled data of Neumann case, one can combine the stability in Eq.~\eqref{lembound1} and the consistency in
Eq.~\eqref{eqn:Tnlinf},
and then
follow similar steps as in the proof in Theorem \ref{convergeencenoboundary}.

The proof for Dirichlet problem follows
the same argument.
One only needs to notice that  the error bound of $O(B^{1/2}h^2N^{-1/2})$ does not apply for Dirichlet problem, since there is no error at the boundary points.

\end{proof}

\section{Numerical results}\label{section4}

In this section, we show numerical examples of parabolic PDE problems defined on three different manifolds. First, we test the proposed method on a two-dimensional annulus embedded in $\mathbb{R}^5$ with homogeneous Dirichlet and Neumann boundary conditions. In the second example, we demonstrate the proposed method on a two-dimensional unknown "cow" manifold. In the third example,  we verify the effectiveness of the proposed method when the grid points are randomly sampled data of a one-dimensional ellipse and a two-dimensional semi-torus.  In these examples, we will verify the accuracy and robustness of the proposed spatial discretization coupled with the backward difference (BD) temporal scheme formulated in previous section.

\subsection{Advection diffusion equation on 2D annulus}

First, we consider solving a PDE problem (\ref{Eqn:utbf}) on a two-dimensional annulus embedded in $\mathbb{R}^{5}$ via the following map,
\begin{equation*}
x:=\left( x_{1},x_{2},x_{3},x_{4},x_{5}\right) =(\sin \varphi \cos \theta
,\sin \varphi \sin \theta ,\sin \varphi \cos 2\theta ,\sin \varphi \sin
2\theta ,\sqrt{2}\cos \varphi )\in M\subset \mathbb{R}^{5}
\end{equation*}%
for $0\leq \theta <2\pi ,\pi /4\leq \varphi \leq \pi /2$, where $\varphi
=\pi /4$ and $\varphi =\pi /2$ are the two boundaries of the annulus. The
Riemannian metric is given by%
\begin{equation*}
g_x(v,w)=v^{{\top }}\left(
\begin{array}{cc}
5\sin ^{2}\varphi & 0 \\
0 & 2%
\end{array}%
\right) w,\text{ }v,w\in T_xM.
\end{equation*}%
For the PDE problem (\ref{Eqn:utbf}), the vector field is set to be
\begin{equation*}
a\left( x \right) =\left(
\begin{array}{c}
a^{1} \\
a^{2}%
\end{array}%
\right) =\left(
\begin{array}{c}
0.5+0.1\sin (\theta ) \\
0%
\end{array}%
\right) ,
\end{equation*}%
where $\sin(\theta)=x_2/(x_1^2+x_2^2)$. For any $u(x,t)$, the forcing term $f$ can
be calculated explicitly as
\begin{equation}
f:=u_{t}-\left( b\cdot \nabla _{g}u+\Delta _{g}u\right).  \label{Eqn:fb}
\end{equation}%
Numerically, the grid points $\left\{ \theta _{i},\varphi _{j}\right\}
_{i=1,\ldots ,I}^{j=1,\ldots ,J}$ are well-sampled and uniformly distributed on $[0,2\pi
)\times \left[ \pi /4,\pi /2\right] $, with $%
(I,J)=(45,12),(64,16),(90,23),(128,32),(181,45),$ for different number of
total grid points of $N=540,1024,2070,4096,8145$. To approximate the
advection-diffusion operator using local kernel, we use $k$ nearest
neighbors algorithm for computational efficiency and estimate the bandwidth parameter $\epsilon$ using the automated tuning algorithm discussed in Section~\ref{basicDMtheory}.

For Neumann boundary, the true solution is set be $u_{true}=\sin (4\varphi
)^{2}\cos (\theta )\exp (-t)$ so that $u$ satisfies homogeneous Neumann
boundary condition. Then, we approximate the solution of the PDE problem in (%
\ref{Eqn:utbf}) subjected to the manufactured $f$ as analytically calculated
in (\ref{Eqn:fb}). Figure \ref{Fig1_diskNeum}(a) shows the true solution in
the polar coordinate of $(\theta ,\varphi )\in \lbrack 0,2\pi )\times \left[
\pi /4,\pi /2\right] $, in which the inner circle corresponds to one
boundary $\varphi =\pi /4$ and the outer circle corresponds to the other
boundary $\varphi =\pi /2$. Figures \ref{Fig1_diskNeum}(b) and (c) show the
comparison of pointwise errors for the DM solution and GPDM solution,
respectively, for $N=2070$ at $t=0.05$, where we integrate with $\Delta t=0.001$. To apply the DM and GPDM, we set $%
k=120$ and use the auto-tuned kernel bandwidth parameter $\epsilon =0.0026$. Both methods use
finite difference for the discretization of the Neumann boundary condition.
One can clearly observe from Fig. \ref{Fig1_diskNeum} that the pointwise
error of GPDM is relatively small compared to the pointwise error of DM at $%
t=0.05$.

\begin{figure*}[tbp]
{\scriptsize \centering
\begin{tabular}{ccc}
{\normalsize (a) Truth, $t=0.05$} & {\normalsize (b) Error of DM, $t=0.05$}
& {\normalsize (c) Error of GPDM, $t=0.05$} \\
\includegraphics[width=0.3\linewidth]{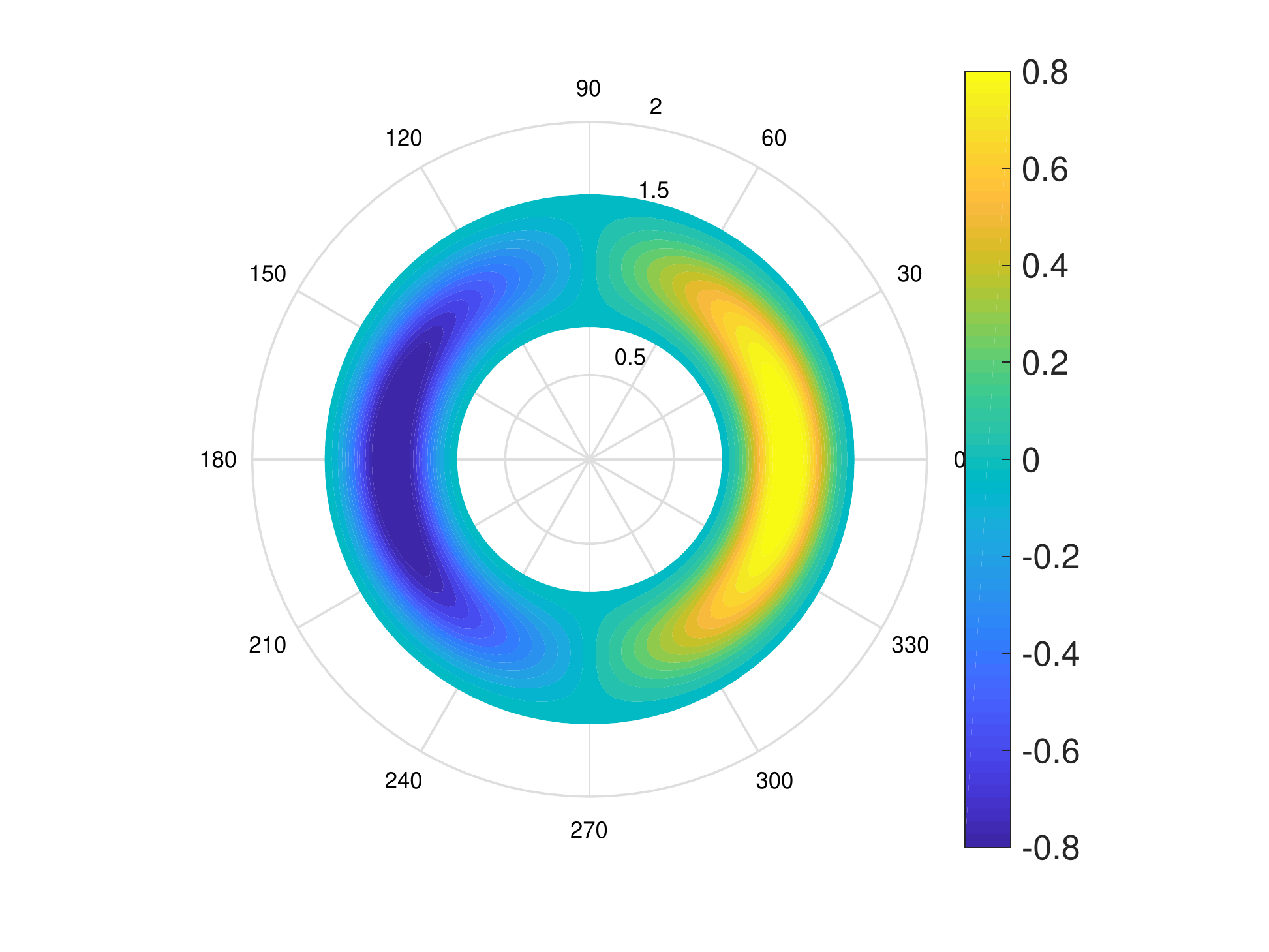} & %
\includegraphics[width=0.3\linewidth]{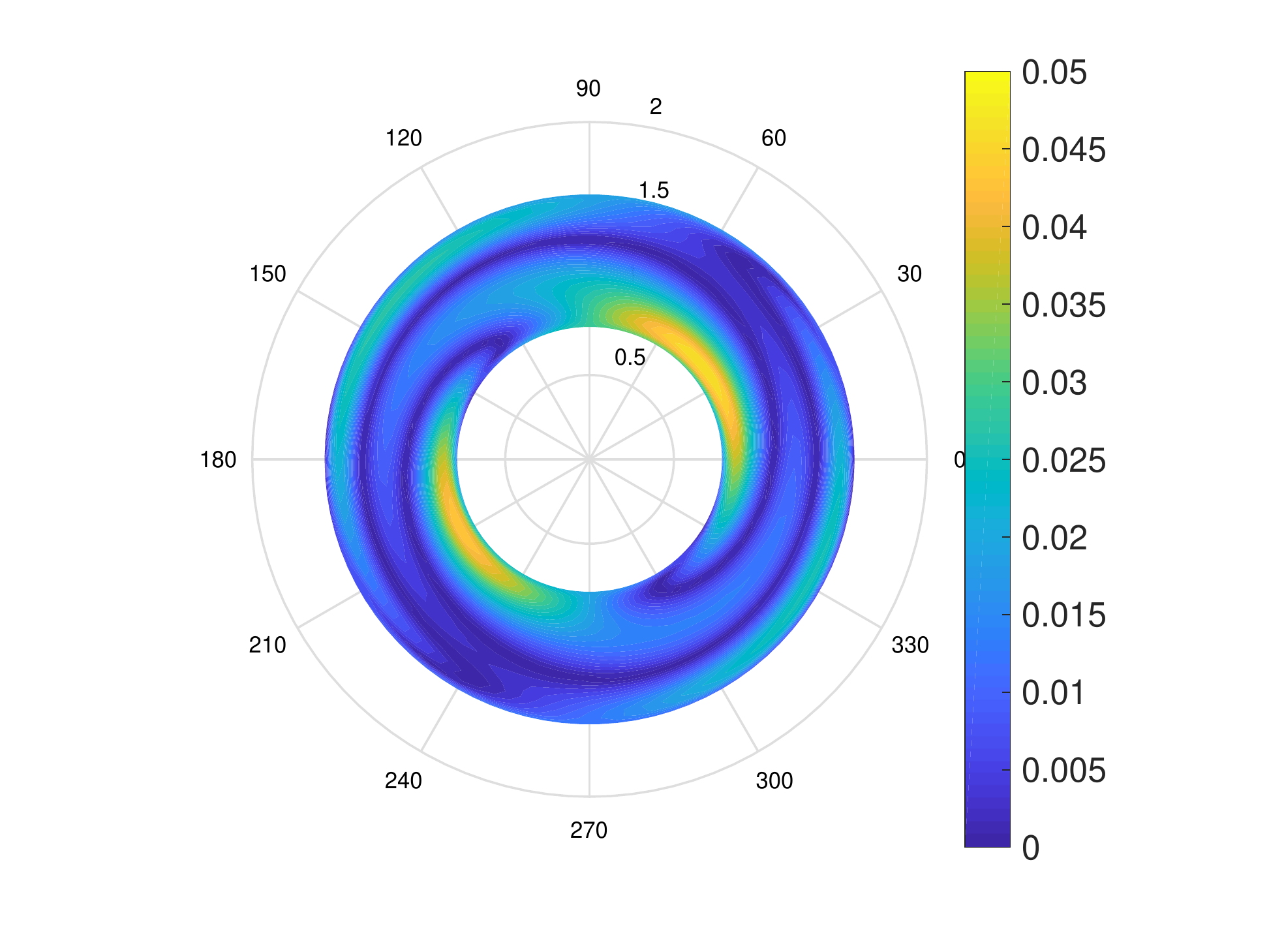} & %
 \includegraphics[width=0.3\linewidth]{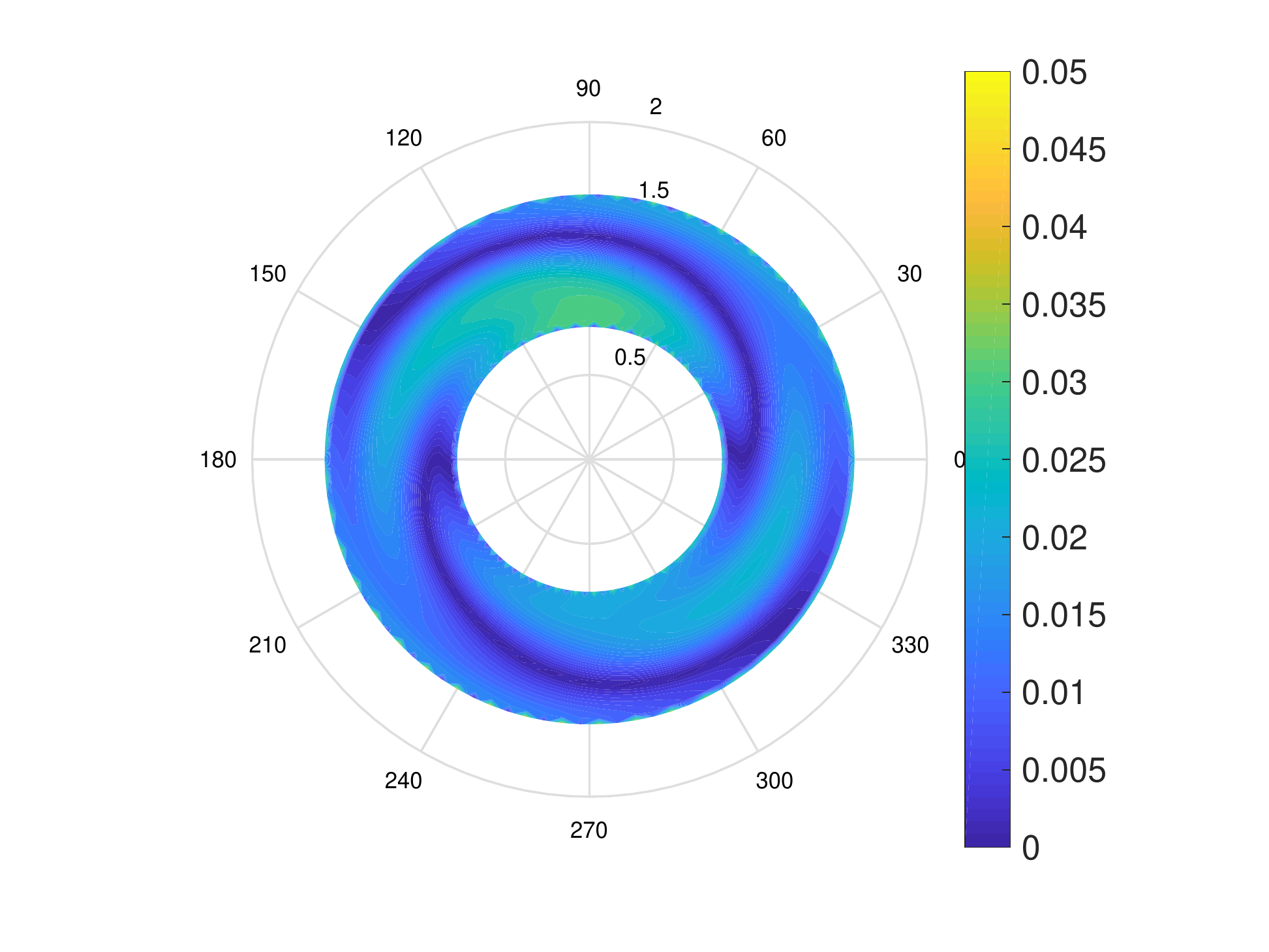} \\
&  &
\end{tabular}
}
\caption{Annulus example for Neumann boundary condition with $N=2070$, $%
t=0.05$, $\Delta t=0.001$: (a) True solution. (b) DM method and (c) GPDM
method for pointwise absolute error as a function of $\protect\theta$ and $%
\protect\varphi$. }
\label{Fig1_diskNeum}
\end{figure*}

For Dirichlet boundary, the true solution is set to be $u_{true}=\sin (4\varphi
)\cos (\theta )\exp (-t)$ so that $u$ satisfies homogeneous Dirichlet
boundary condition. Figure \ref{Fig2_diskDiri}(a) shows the true solution $u$ at $t=0.05$. For diagnostic purposes, we cannot compare GPDM directly to the standard DM since the latter only takes functions that satisfy homogeneous Neumann boundary conditions. Instead, we compare our result to the Volume Constraint Diffusion Maps (VCDM) that was introduced for the Dirichlet boundary problem \cite{shi2015enforce}, which basic idea is to truncate the rows and columns of the local kernel matrix corresponding to the points close enough to the boundaries.
Figures \ref{Fig2_diskDiri}(b) and (c) show the comparison of pointwise
errors for the VCDM and GPDM solutions, respectively, for $N=2070$ at $t=0.05$, obtained with $\Delta t=0.001$. In this numerical test, we take $%
k=120$ and use the auto-tuned kernel bandwidth parameter $\epsilon =0.0026$. Here, at each boundary, $\varphi =\pi /4$ and $\varphi =\pi /2$, three layers close to the boundary are truncated. One can observe that the pointwise error of GPDM is relatively small compared to the pointwise error of VCDM.

\begin{figure*}[tbp]
{\scriptsize \centering
\begin{tabular}{ccc}
{\normalsize (a) Truth, $t=0.05$} & {\normalsize (b) Error of VCDM, $t=0.05$}
& {\normalsize (c) Error of GPDM, $t=0.05$} \\
\includegraphics[width=0.3\linewidth]{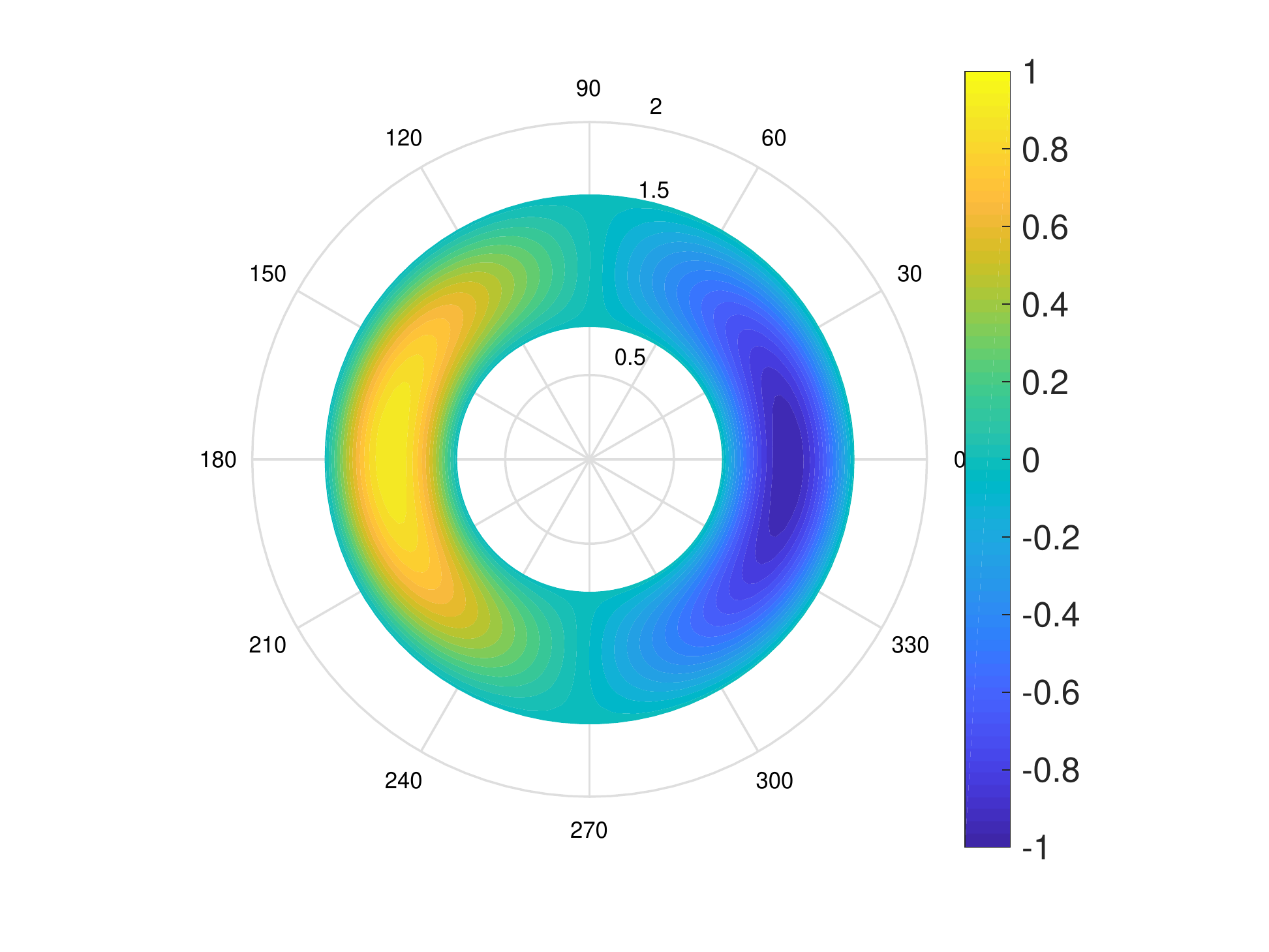} & %
\includegraphics[width=0.3\linewidth]{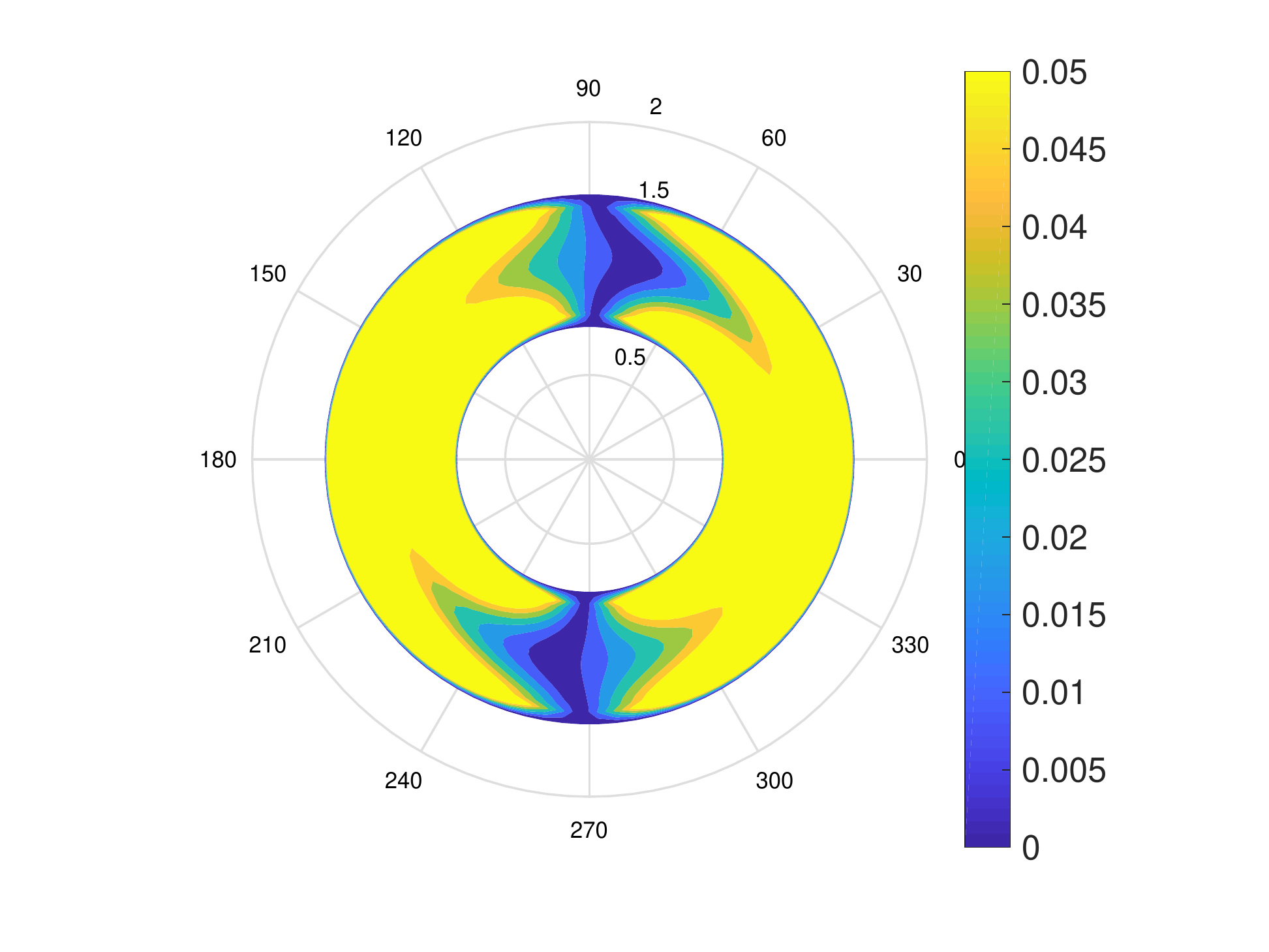} & %
\includegraphics[width=0.3\linewidth]{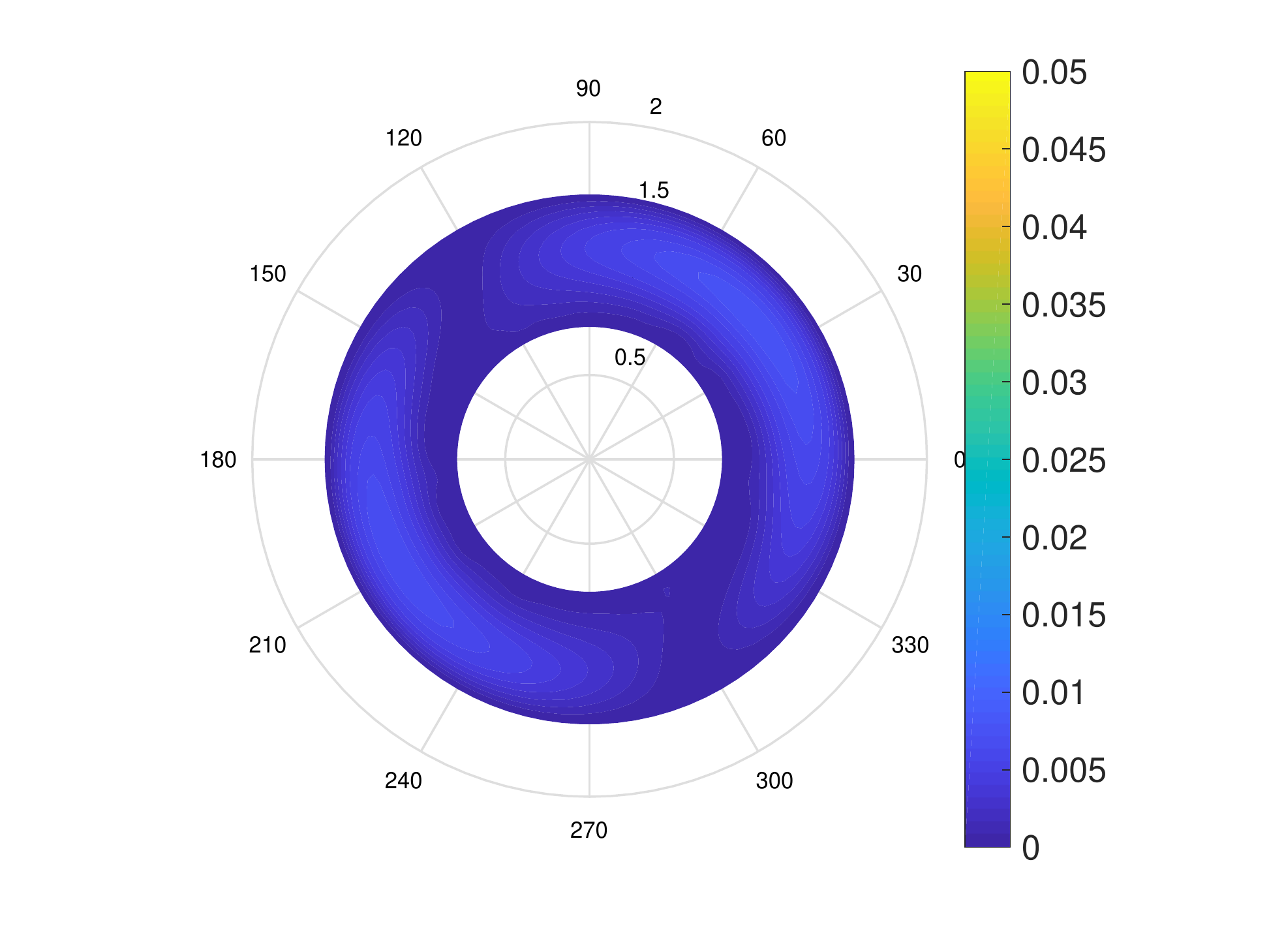} \\
&  &
\end{tabular}
}
\caption{Annulus example for Dirichlet boundary condition with $N=2070$, $%
t=0.05$, $\Delta t=0.001$: (a) True solution. (b) VCDM method truncated with
3 layers, and (c) GPDM method for pointwise absolute error as a function of $%
\protect\theta$ and $\protect\varphi$. }
\label{Fig2_diskDiri}
\end{figure*}

{To illustrate the convergence of solutions over number of points $N$, we fix $k=200$ for each $N$. In Figure~\ref{Fig3_err_2Ddisk}, we report the kernel bandwidth parameters  $\epsilon$, which are chosen to decay on the order of-$N^{-1}$ for both boundary conditions. To achieve convergence solution with these parameter values, we should point out that we have used smaller time step $\Delta t=10^{-4}$ relative to the results shown in the previous two figures. This smaller $\Delta t$ is consistent with the requirement for stability in $\ell^2$ sense as assumed in Lemmas~\ref{lemstabNeubou} and \ref{lemstabDir}. Specifically,
in Figure~\ref{Fig3_err_2Ddisk}, we show the $\ell^2$-errors of numerical solutions at $t=0.005$ as functions of $N$. Using the notation in Theorem \ref{convergewithbound}, the error in this figure is defined as $\|\mathbf{u}^{50}_M-\mathbf{U}^{50}_M\|_{\ell^2}$.
For Neumann and Dirichlet boundary conditions, we can see that the $\ell^2$ error rate is close to an order of bandwidth parameter, $\epsilon\sim N^{-1}$.  Numerically, we  find that the  last two error terms in convergence of Theorem \ref{convergewithbound} is not applied  in this example.


\begin{figure*}[tbp]
{\scriptsize \centering
\begin{tabular}{cc}
{\normalsize (a) Neumann} & {\normalsize (b) Dirichlet} \\
\includegraphics[width=.45\linewidth]{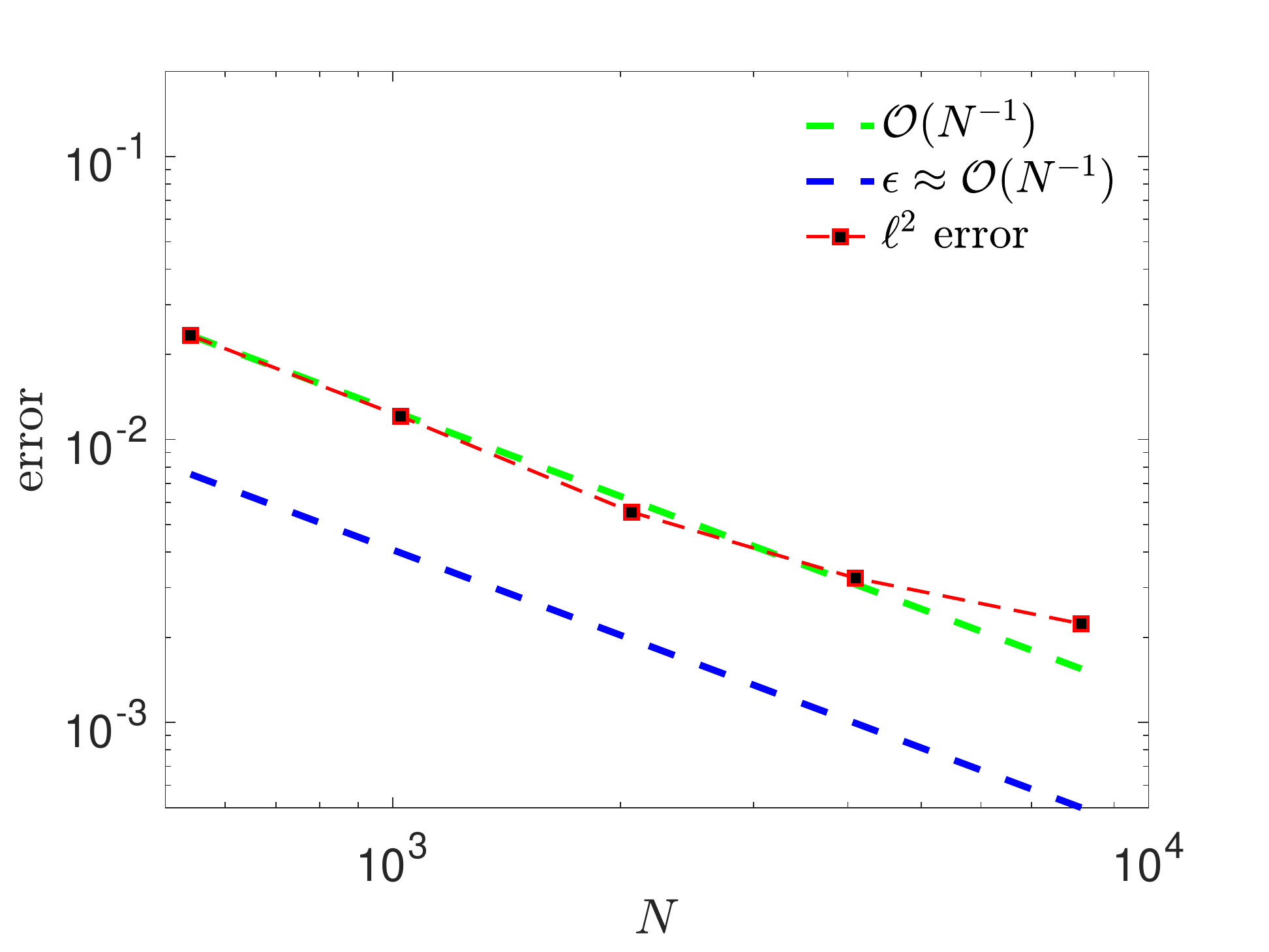} & %
\includegraphics[width=.45\linewidth]{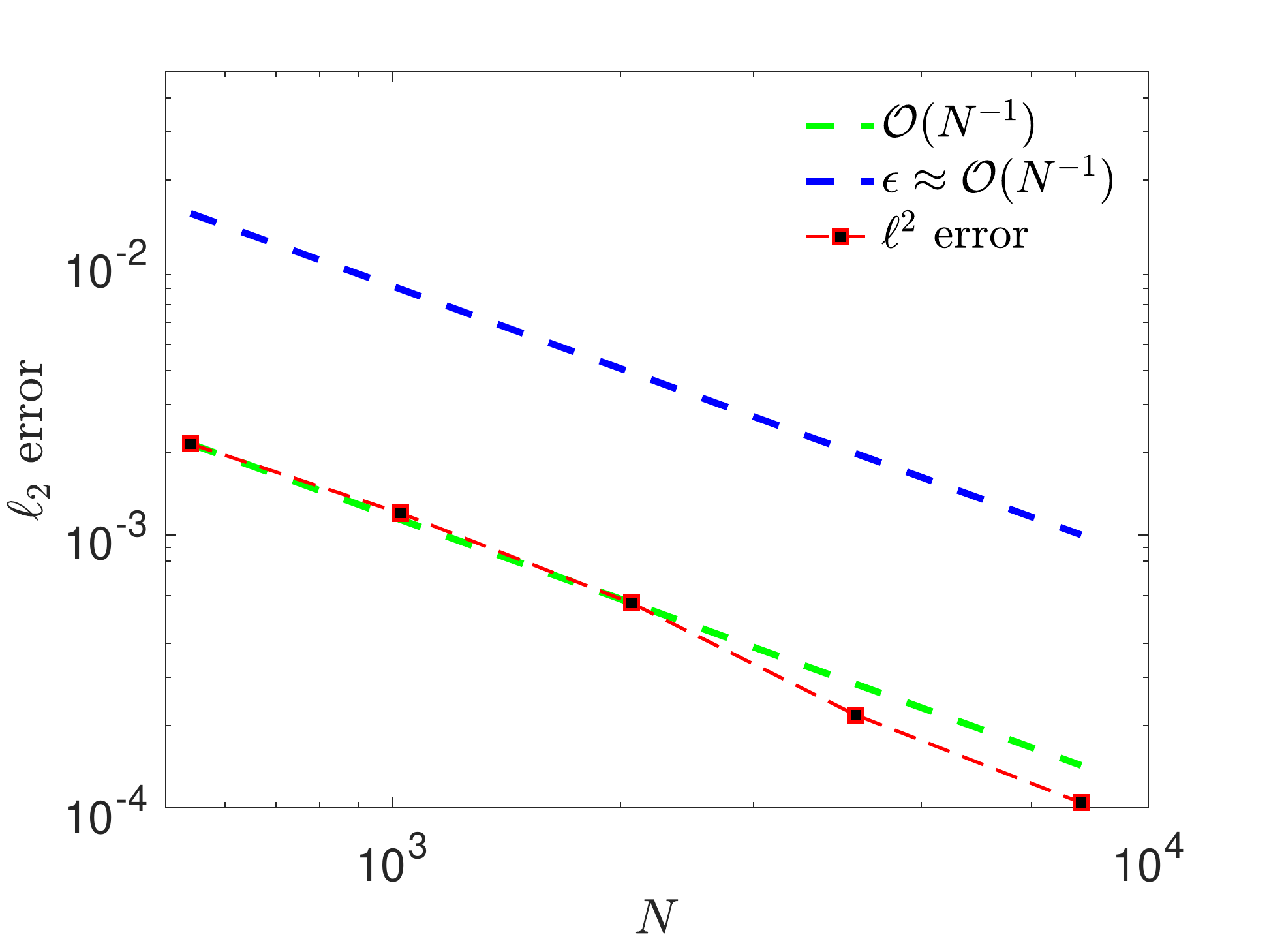}%
\end{tabular}
}
\caption{Annulus example: The $\ell^2$-error at $t=0.005$ with $\Delta t=10^{-4}$: (a)
Neumann boundary conditon. (b). Dirichlet boundary condition. }
\label{Fig3_err_2Ddisk}
\end{figure*}

\subsection{Parabolic equation on an unknown "cow" manifold}\label{cownumericalsection}

In this section, we are solving the PDE problem (\ref{Eqn:utbf}) on a two-dimensional "cow" manifold $x=(x_{1},x_{2},x_{3})\in M\subset \mathbb{R}^{3} $ with an unknown embedding function. The manifold is homeomorphic to the unit sphere, $S^{2},$ so there is no boundary for this example. The data of this surface is obtained from Keenan Crane's 3D repository \cite{crane}. In this example, we have no access to the analytic solution due to the unknown embedding function. For comparisons, we numerically solve the problem with the finite element method
(FEM) using the FELICITY FEM Matlab toolbox \cite{walker2018felicity}.

We first consider solving the heat equation $u_{t}=\Delta _{g}u+f$ with $f=1$
and the initial condition $u|_{t=0}(x_{1},x_{2},x_{3})=x_{1}+x_{2}+x_{3}$.
For the FEM solution, we provided FELICITY with the 2930 vertices and a
connectivity matrix for the triangle elements. For DM, the method is mesh
free and we used $k=200$ nearest neighbors and tuned the kernel bandwidth to
be $\epsilon =0.011$. The first row of Fig. \ref{Fig5_cow} shows the
relative difference between the two solutions at $t=0.01$ and $0.1$.
The relative difference is defined with respect to the FEM solution, that is,
$\Delta U^{\mathrm{rel}}(x_j,t):=
|U_{\mathrm{DM}}(x_j,t)-U_{\mathrm{FEM}}(x_j,t)|/|U_{\mathrm{FEM}}(x_j,t)|,$
where the denominator is not zero for this example. One can see from Fig. \ref{Fig5_cow}(a)
that the large relative difference is on the feet of the "cow" at the
initial $t=0.01$ and dissipates to smaller value at $t=0.1$. In Figure \ref{Fig4_err_cow}(a), we show that the maximum relative difference
between the two solutions, which is about $0.27$ at short time and dissipates to smaller than $0.05$ as time increases. This suggests that the DM solution and the FEM solution
are close to each other on almost all of the vertices for larger times.

\begin{figure*}[tbp]
{\scriptsize \centering
\begin{tabular}{cc}
{\normalsize (a) Difference b/w DM and FEM  } & {\normalsize (b) Difference b/w DM and FEM }\\
{\normalsize for Heat Eq. ($a=0$), at $t=0.01$} & {\normalsize for Heat Eq.($a=0$), at $t=0.1$}\\ 
\includegraphics[width=0.45\linewidth]{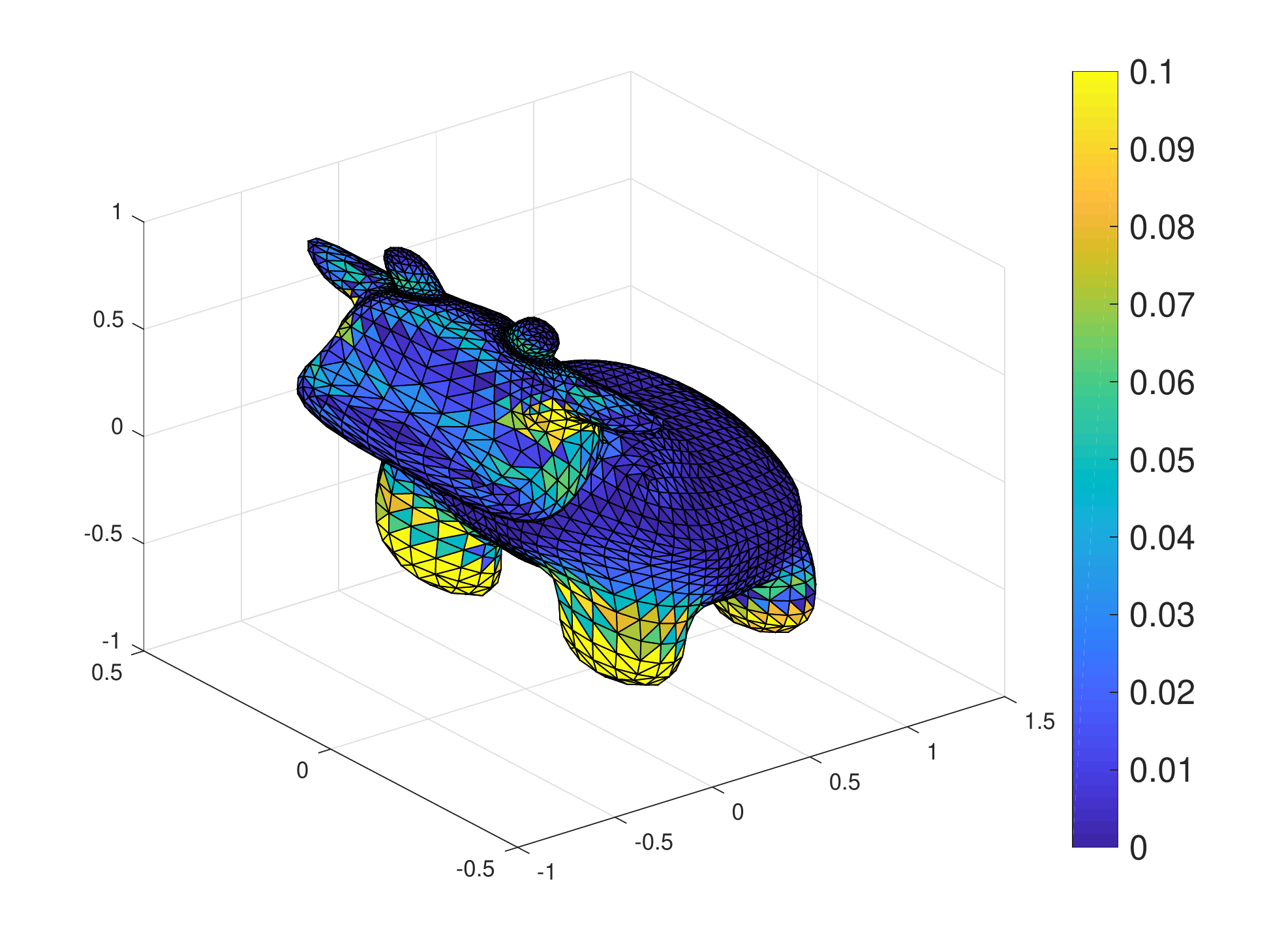} & %
\includegraphics[width=0.45\linewidth]{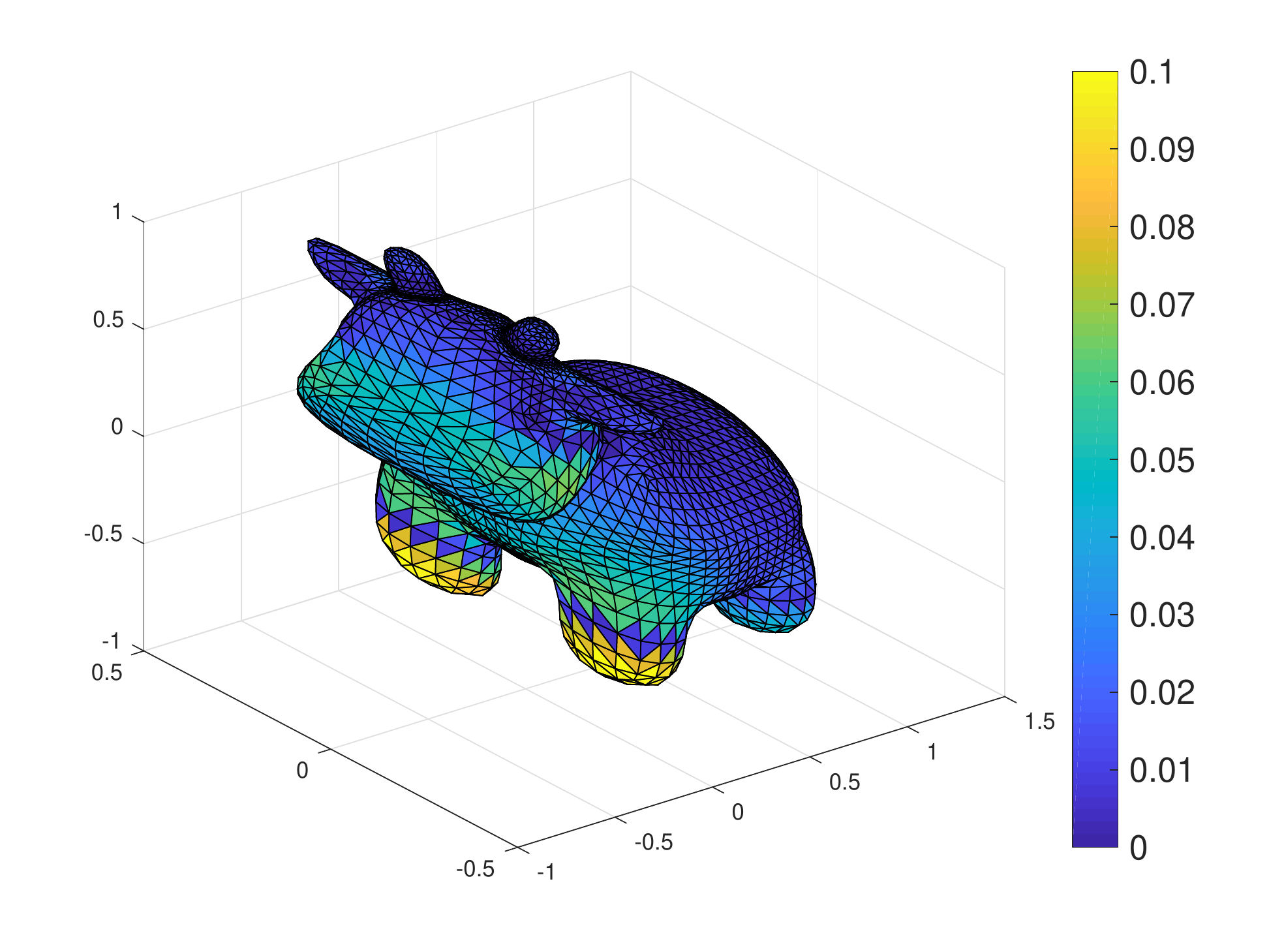}\\  %
{\normalsize (d) Difference b/w DM and FEM } & {\normalsize (e) Difference b/w DM and FEM }\\
{\normalsize for Adv.-Diff. Eq.($a\neq0$), at $t=0.01$} & {\normalsize for Adv.-Diff. Eq.($a\neq0$), at $t=0.1$%
} \\
\includegraphics[width=0.45\linewidth]{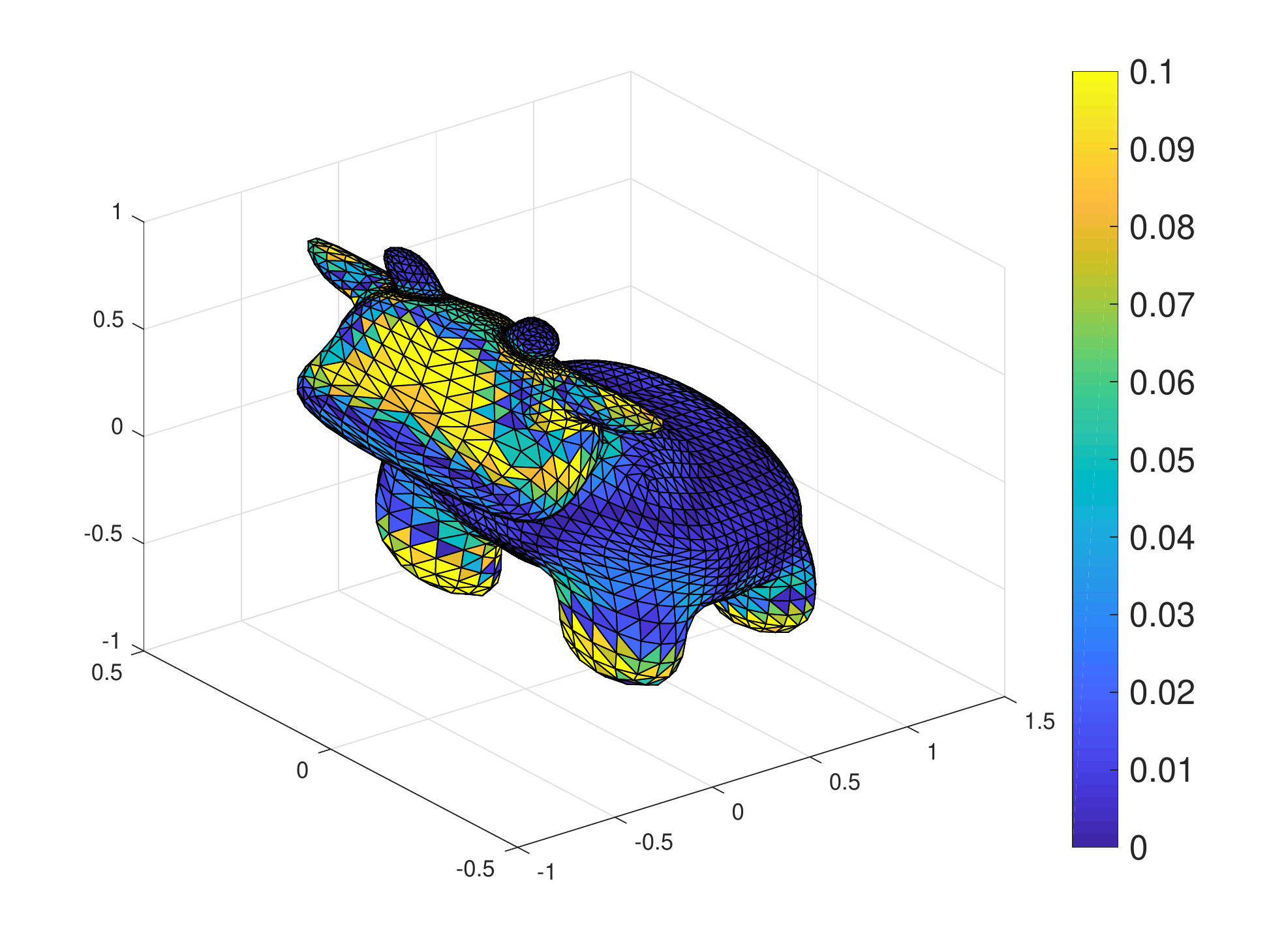} & %
\includegraphics[width=0.45\linewidth]{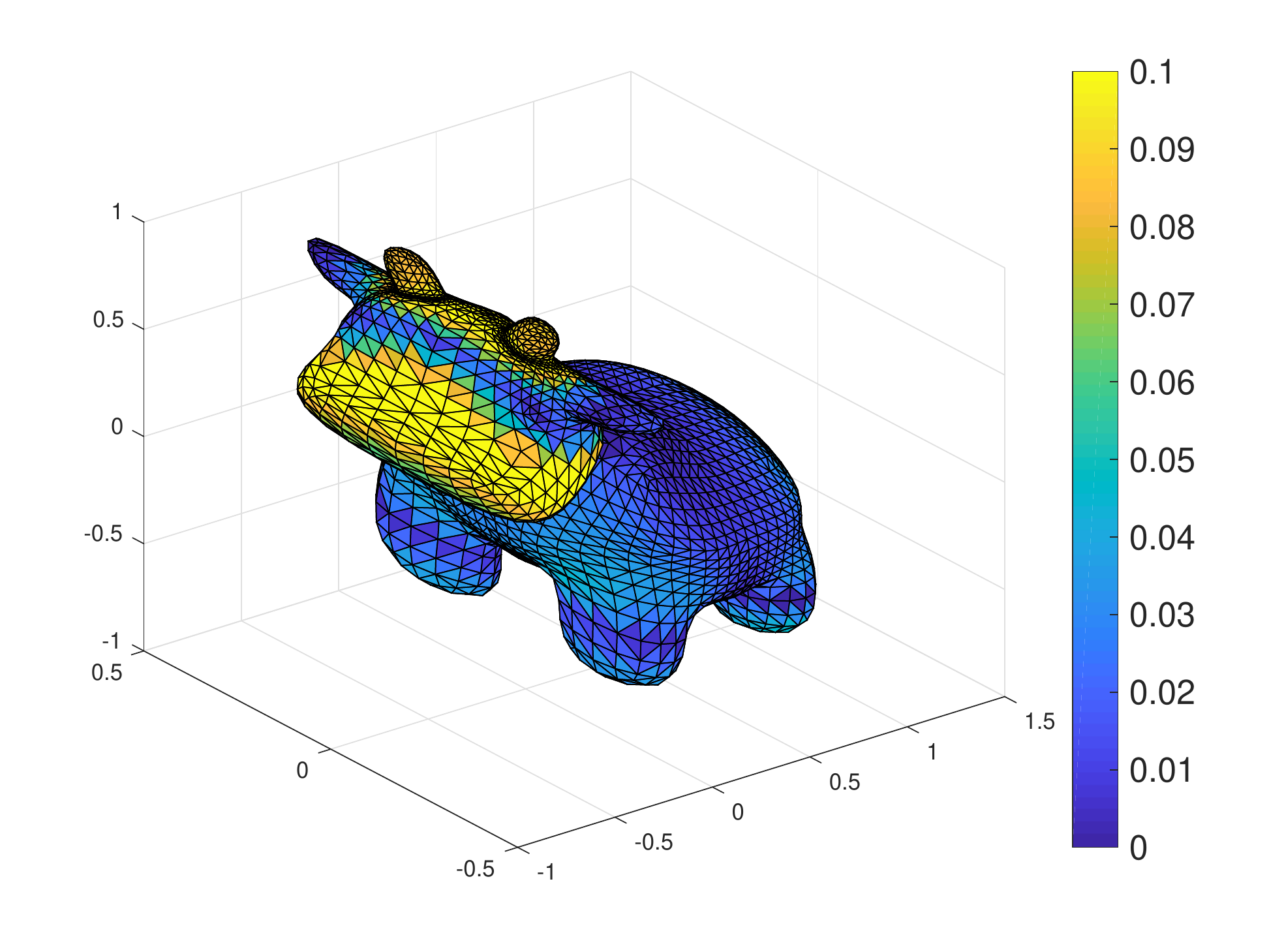}  %
\end{tabular}
}
\caption{Relative difference, defined as $\Delta U^{\mathrm{rel}}(x_j,t) =
|U_{\mathrm{DM}}(x_j,t)-U_{\mathrm{FEM}}(x_j,t)|/|U_{\mathrm{FEM}}(x_j,t)|$,
of DM solution to FEM solution on the "cow" manifold. First row: heat
equation (a) $t=0.01$. (b) $t=0.1$. Second row:
advection-diffusion equation (a) $t=0.01$. (b) $t=0.1$.
}
\label{Fig5_cow}
\end{figure*}

Next, we consider solving the advection diffusion equation $u_{t}=a\cdot
\nabla _{g}u+\Delta _{g}u$ with the same initial condition $%
u(x_{1},x_{2},x_{3})|_{t=0}=x_{1}+x_{2}+x_{3}$. In this numerical example, we define vector $a$ to be the projection of the vector $(1,1,1)$ onto the tangent space at each point on the manifold.  Here, we should point out that numerically $a(x)$ are specified differently between FEM and DM solvers. For the FEM, we provide the vector $a$
as the projection of $(1,1,1)$ onto the surface of each triangle element.
For the DM method, since this is a mesh-free solver, we estimate $a(x)$ at each grid point on the "cow" as follows. First, we use the SVD method introduced in \cite{berry2018iterated} (see also a short review in  Appendix~\ref{App:A}) to estimate the normal directions at each grid point. Then we can align the corresponding unit normal vectors in a consistent orientation using the method introduced in \cite{singer2011orientability}. Finally, we estimate vector $a$ by projecting the vector $(1,1,1)$ onto the tangent space spanned by the aligned estimated normal vectors.

The second
row of Fig. \ref{Fig5_cow} shows the the relative difference at $t=0.01$ and $%
0.1$. Fig. \ref{Fig4_err_cow}(b) shows the maximum relative
difference between two solutions. One can observe from Fig. \ref%
{Fig4_err_cow}(b) that the maximum relative difference is small about $0.12$
even if the vector fields $a(x)$ are estimated differently in DM and FEM, and it decays as time increases to about $0.02$. Thus,
even when the embedding function of a manifold is unknown, the DM method can
be used to accurately approximate solutions of (\ref{Eqn:utbf}) with only
vertices provided on the manifold.

\begin{figure*}[tbp]
{\scriptsize \centering
\begin{tabular}{cc}
{\normalsize (a) Heat Equation} & {\normalsize (b) Advection-Diffusion
Equation} \\
\includegraphics[width=.45\linewidth]{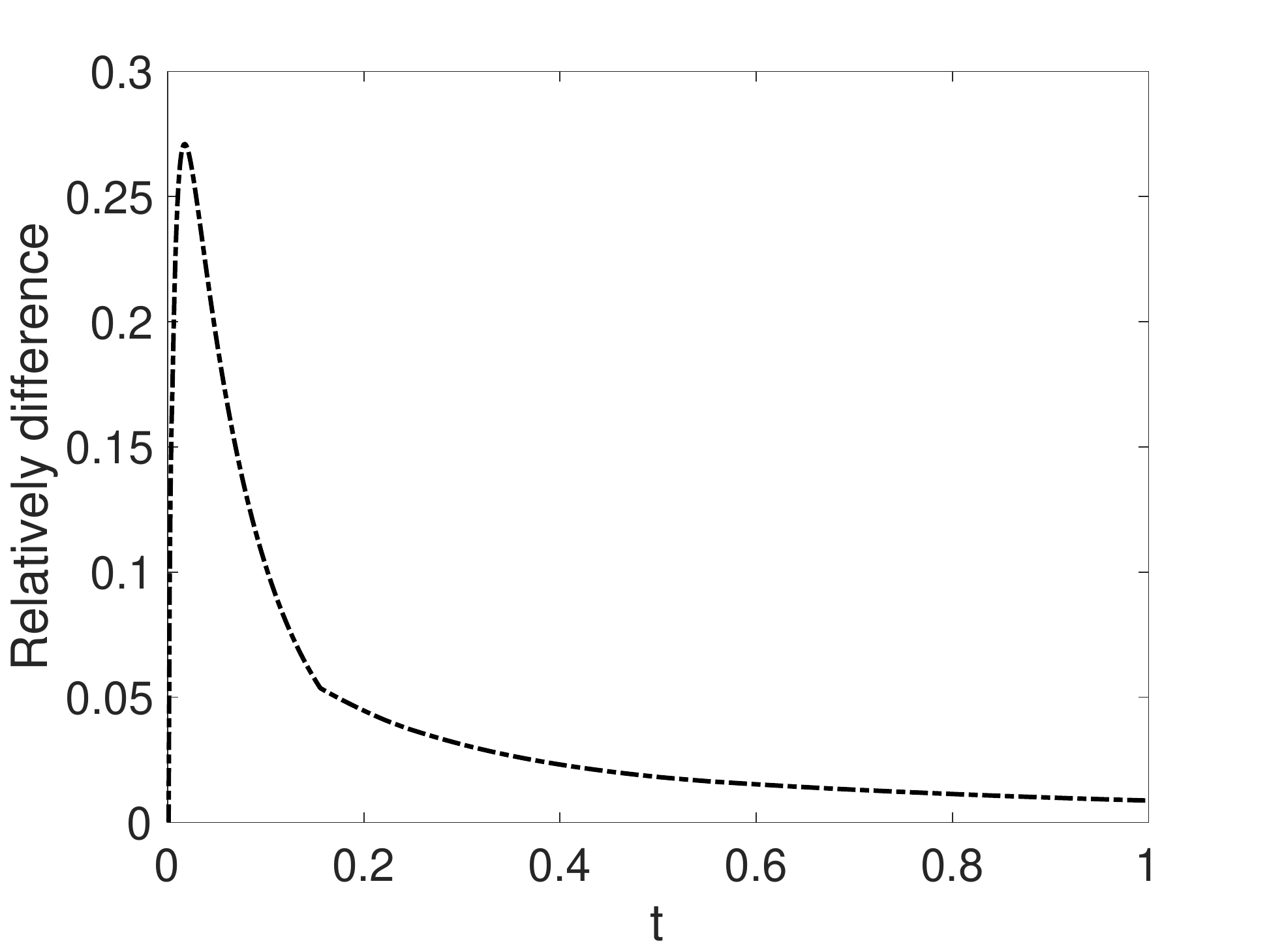} & %
\includegraphics[width=.45%
\linewidth]{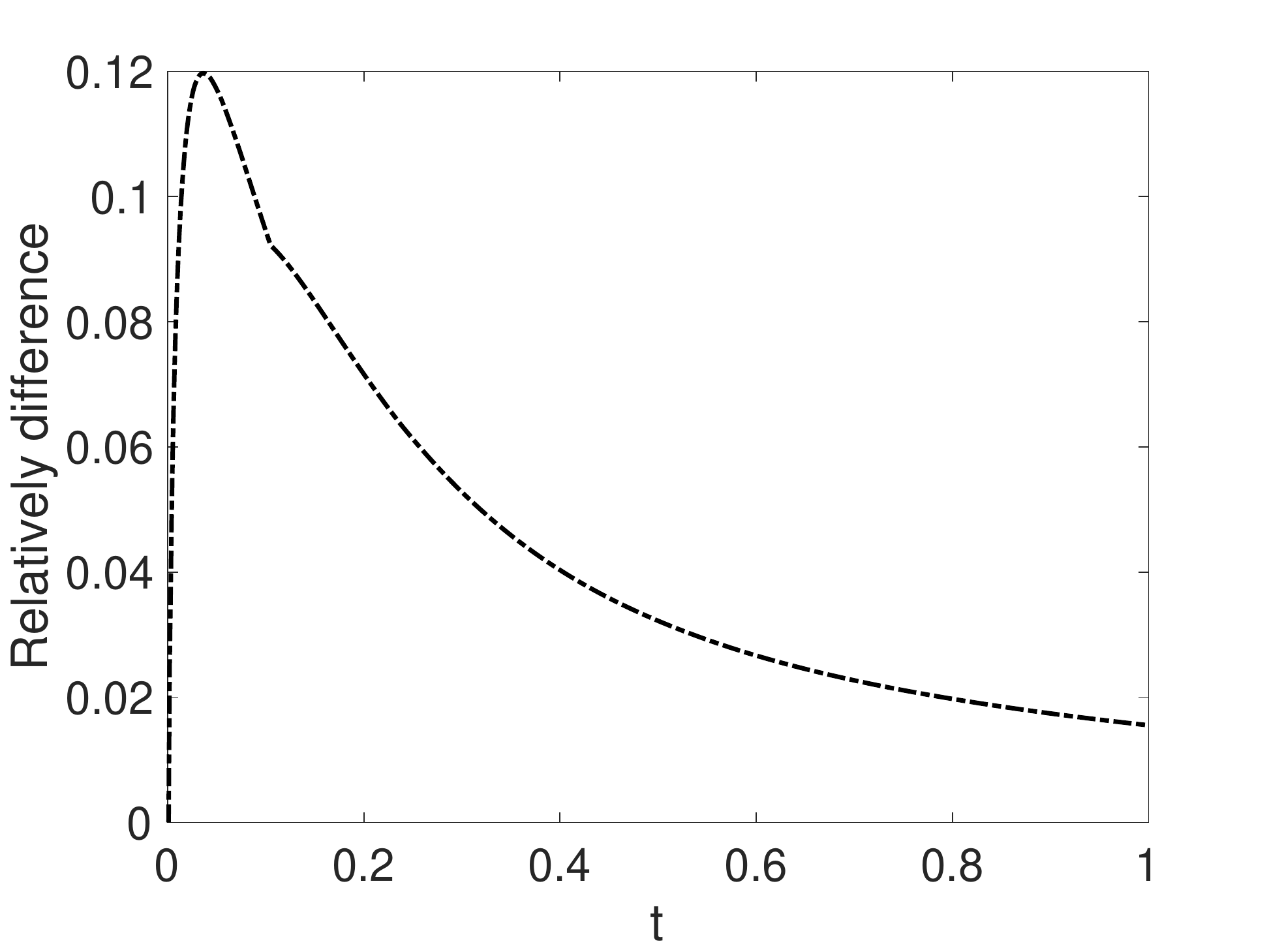}%
\end{tabular}
}
\caption{Maximum of relative difference, defined as $\max_{j=1,\ldots,N}%
\Delta U^{\mathrm{rel}}(x_j,t)$, as a function of time with $\Delta t=0.001 $%
: (a) Heat equation. (b) Advection-diffusion equation.}
\label{Fig4_err_cow}
\end{figure*}

\subsection{Parabolic equation on two simple manifolds with random data}\label{numericrandom}
In this section, we consider solving the heat equation on a full ellipse and a semi-torus with Dirichlet boundary conditions. The focus of this section is to demonstrate the robustness of the proposed method on randomly distributed grid points, sampled from a uniform distribution. 

\textbf{Full ellipse example}: We first consider solving the PDE on an ellipse which is defined with the usual embedding function,
$$
x:(x_1,x_2)=(\cos \theta, 2 \sin \theta)^{\top}, \qquad \theta\in[0,2\pi].
$$
\par The true solution is set to be $u(\theta,t)=e^{-t}\sin( \theta).$
Numerically, the grid points $\{\theta_i\}$ are randomly sampled from the uniform distribution on $[0,2\pi]$. For $N=[100,200,400,800,1600]$ grid points, we fix $k=100$ for all $N$. To demonstrate the robustness of the results, for each $N$, we will show solutions of $10$ independent trails. In each trail, we specify $\epsilon$ to be on the order of $N^{-2/7}$, which is the pointwise error estimate obtained by balancing the two terms in \eqref{Eqn:cbc}. For the time step, we set $\Delta t=10^{-4}$.
Fig. \ref{fig_semi_torus}(a) shows the results of 10 independent trials at $t=0.005$. Each red cross shows the uniform error for one trial, i.e., $\|\mathbf{U}^{50}_M -\mathbf{u}^{50}_M\|_{\infty}$, and the red dashes line shows the corresponding average over 10 trials.
Each magenta cross shows the $\ell^2$-error for the same trial, $\|\mathbf{U}^{50}_M -\mathbf{u}^{50}_M\|_{\ell^2}$, and the magenta dashes line shows the corressponding average over 10 trials. One can see that the mean of errors decrease roughly on order of bandwidth parameter  $\epsilon\sim N^{-2/7}$.

\textbf{Semi-torus example}: Here, we consider an example on a semi-torus defined with the following embedding function,
$$
x:=(x_1,x_2,x_3)=(2+\cos \theta) \cos \phi,(2+\cos \theta) \sin \phi,\sin \theta)
$$
for $0\leq\theta\leq 2\pi,0\leq\phi\leq\pi$ where $\phi=0$ and $\phi=\pi$ are the two boundaries of the semi-torus. The Riemannian metric is given by
$$
g_{x}(v, w)=v^{\top}\left(\begin{array}{cc}1 & 0 \\ 0 & (2+\cos \theta)^{2}\end{array}\right) w,\quad v,w\in T_xM.
$$
The boundary conditions are $u=0$ at both boundaries $\phi=0$ and $\phi=\pi$. The true solution is set to be $u=e^{-t}\sin( \theta)\sin(\phi).$ Numerically, the grid points $\{\theta_i,\phi_j\}$ are randomly sampled from the uniform distribution on $[0,2\pi]\times[0,\pi]$. For $N=[16^2, 23^2, 32^2, 45^2, 64^2]$ grid points, we fix $k$ such that  $k=200$ for all $N$. As in the experiment above, we also take $10$ independent trails. In each trail, we fix $\epsilon\sim N^{-1/4}$, which is the pointwise error estimate obtained by balancing the two terms in \eqref{Eqn:cbc}. For the time step, we set $\Delta t=10^{-4}$ as before.
Fig. \ref{fig_semi_torus}(b) shows results for 10 independent trials at $t=0.005$. One can see that both, the mean of the uniform and $\ell^{2}$, errors (averaged over 10 trials) decrease on order of $N^{-1/5}$, slightly slower than the rate of the bandwidth parameter, $\epsilon\sim N^{-1/4}$.

  \begin{figure}[tbp]%
  {\scriptsize \centering
\begin{tabular}{cc}
{\normalsize (a) On a full ellipse} & {\normalsize (b) On a semi-torus} \\
\includegraphics[width=.45\linewidth]{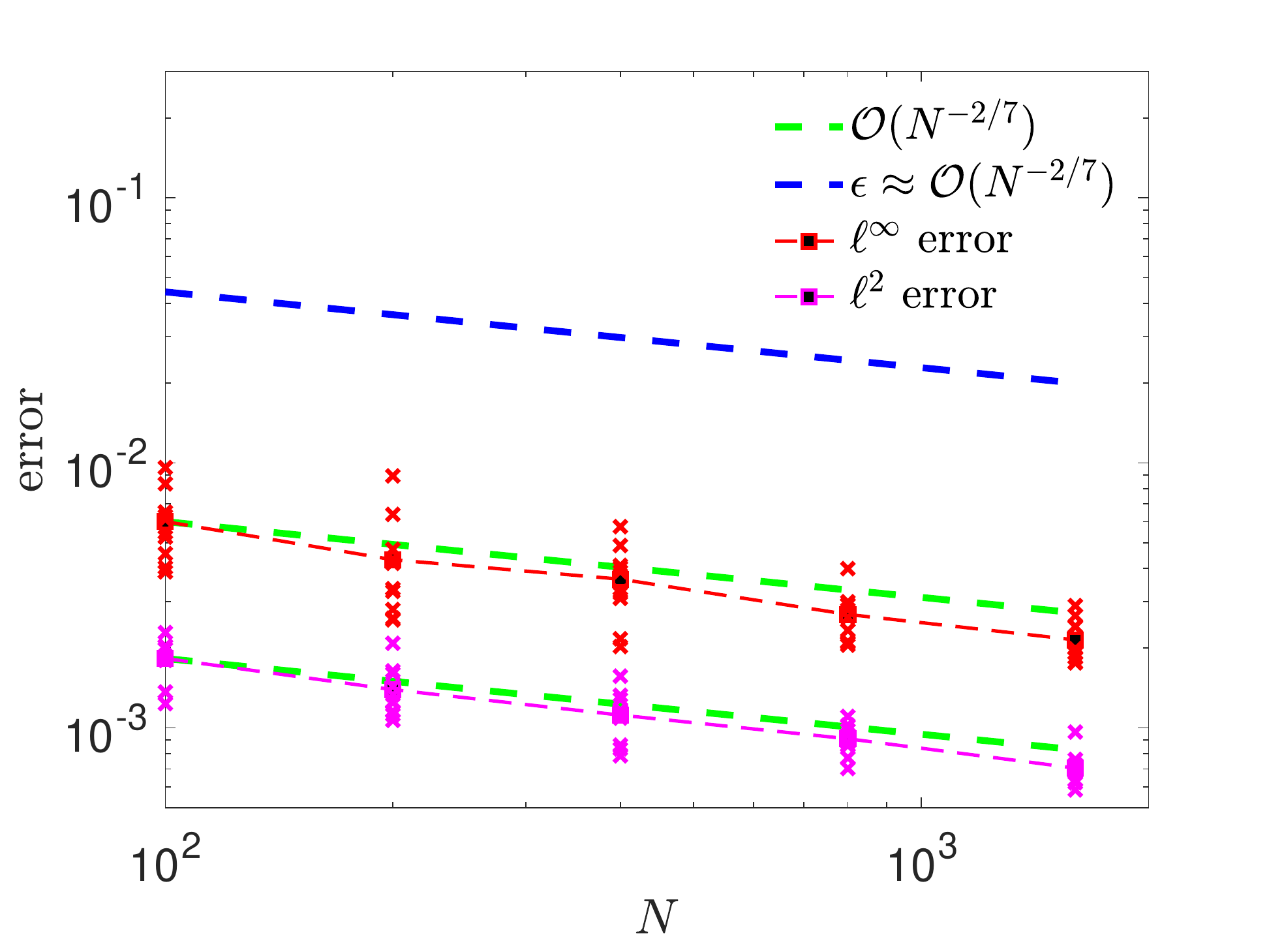} & %
\includegraphics[width=.45%
\linewidth]{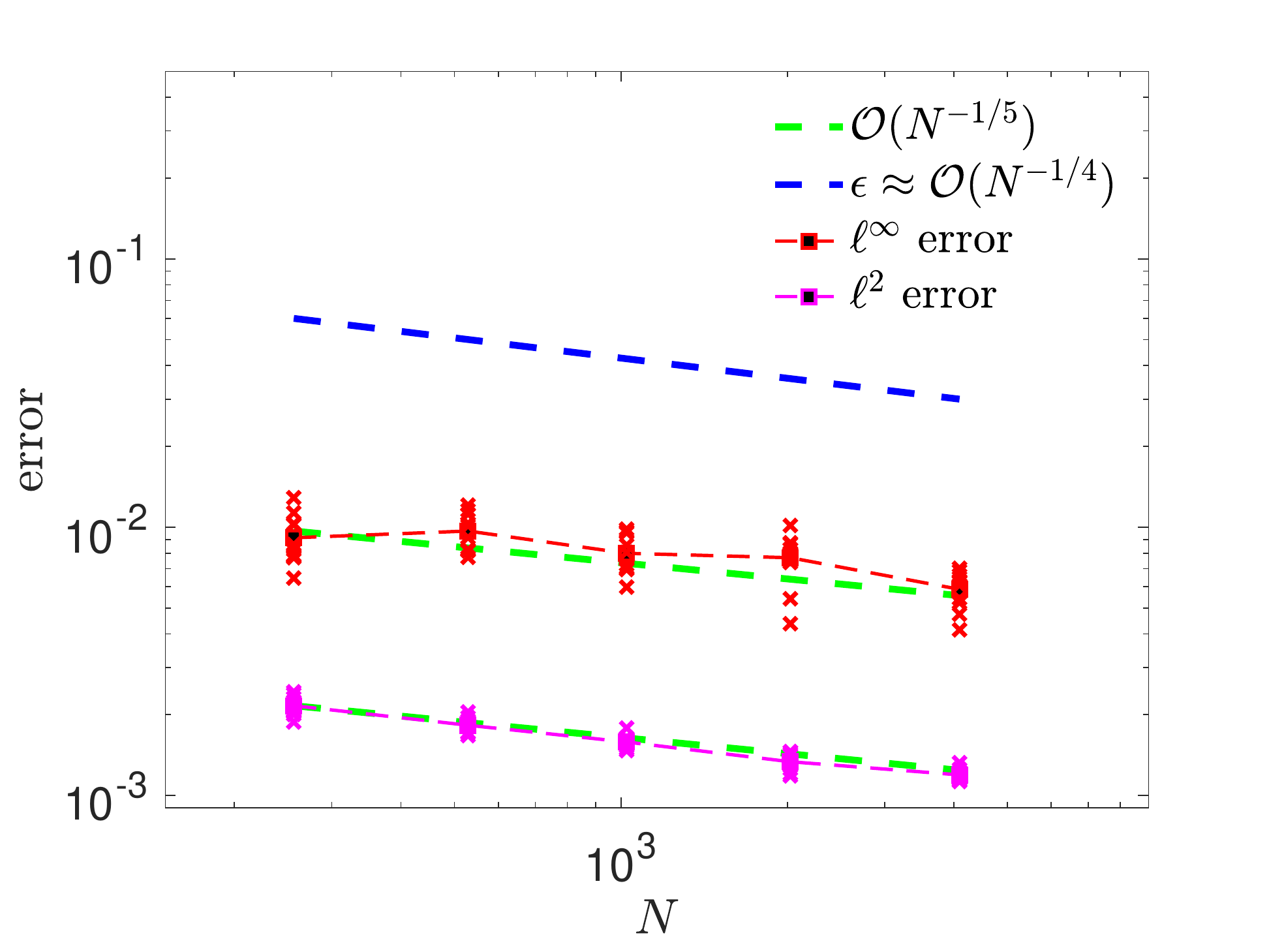}%
\end{tabular}
}
 \caption{Heat equation on simple manifolds with random data. We show $10$ independent trials for each $N$. Each red cross is the uniform error for one trial. Each magenta cross is the $\ell^2$-error for one trial. (a). Full ellipse. (b). Semi-torus with Dirichlet boundary.
 }
 \label{fig_semi_torus}
 \end{figure}

\section{Numerical solution of viscous Burger's equation}\label{section5}
In this section, we consider solving the nonlinear Burger's equation,
\begin{equation}
u_{t}=u \nabla _{g}u+\Delta _{g}u+f,
\label{burger}
\end{equation}%
for $u:M\to\mathbb{R}$ on a one-dimensional curve $M$ with homogeneous Dirichlet boundary condition. To solve the Burger's equation, we apply the pseudo-spectral method where we will use the approximated eigenfunctions and eigenvalues of the Laplace-Beltrami operator $\Delta_g$.

More precisely, we look for a solution of the form
$$
u(x,t_n)=\sum_{k=0}^K\hat{u}_k(t_n)\varphi_k(x),\quad (x,t)\in M\times [0,T],
$$
where $\{\varphi_k\}_{k=0}^\infty$ are eigenfunctions of Laplace-Beltrami operator. As before, we take backward difference for temporal discretization. Then we have the following scheme to approximate (\ref{burger})
\begin{equation}
\left\{\begin{aligned}
\sum_{k=0}^K\frac{\hat{u}_k(t_{n+1})-\hat{u}_k(t_n)}{\Delta t}\varphi_k(x)&=\sum_{k=0}^K\hat{u}_k(t_{n+1}) (u(x,t_n)\nabla_g\varphi_k(x)+\Delta_g\varphi_k(x)) \\
\sum_{k=0}^n\hat{u}_k(0)\varphi_k(x)&=u_{0}(x),\quad x\in M
\end{aligned}\right..
\label{2-2}
\end{equation}

\par For $1\leq k\leq n$, multiply the first equation in Eq.(\ref{2-2}) by $\varphi_k$ and integrate both sides over $M$,
$$
\frac{\hat{u}_k(t_{n+1})-\hat{u}_k(t_n)}{\Delta t}=\sum_{l=0}^K \langle u(x,t_n) \nabla_g\varphi_l,\varphi_k\rangle\hat{u}_l(t_{n+1})+c\lambda_k\hat{u}_k(t_{n+1})
$$
where we have used the fact that the eigenfunctions $\{\varphi_k\}_{k=0}^\infty$ are orthonormal under $L^2(M)$.
\par Next, we need to replace $\varphi_k$ and $\lambda_k$ by the approximate eigen-pairs $(\hat{\varphi}_k,\hat{\lambda}_k)$ of the following eigen problem
$$
\mathbf{L}_0\hat{\varphi}_k=\hat{\lambda}_k\hat{\varphi}_k
$$
where $\mathbf{L}_0$ is the GPDM matrix constructed as in Section 2 to approximate $\Delta_g$, accounting for the Dirichlet boundary condition. Here, the GPDM matrix is constructed with the local kernel in \eqref{Eqn:Kexy} with $A(x)=0$.
We denote $\mathbf{L}_1$ as the GPDM matrix that approximates the operator $\nabla_g+\Delta_g$ in 1D case.
Here, the GPDM matrix is constructed using the local kernel in \eqref{Eqn:Kexy} with $A(x)=D\iota(x) \in \mathbb{R}^2$, where $D\iota:M\to \mathbb{R}^2$ denotes the Jacobian of the embedding function $\iota$. In the numerical example below, we assume that the embedding is given. In case $D\iota$ is not available, one can estimate it with the SVD local regression method (see Appendix~\ref{App:A}) up to a sign difference and then align these vectors in a consistent orientation. Defining $\mathbf{L}_2:=\mathbf{L}_\mathbf{1}-\mathbf{L}_0$, one has
$
\mathbf{L}_2\varphi_k\approx \nabla_g\varphi_k
$ for 1D case.
To conclude, we have the following scheme to approximate the solution of (\ref{burger}),
$$
\left\{\begin{aligned}
 \frac{\hat{u}_k(t_{n+1})-\hat{u}_k(t_n)}{\Delta t}&=\sum_{l=0}^K \langle u(x,t_n)\mathbf{L}_2\hat{\varphi}_l,\hat{\varphi}_k\rangle\hat{u}_l(t_{n+1})+c\hat{\lambda_k}\hat{u}_k(t_{n+1}),\quad 1\leq k\leq K\\
\sum_{k=0}^n\hat{u}_k(0)\hat{\varphi}_k(x_i)&=u_{0}(x_i),\quad \{x_i\}_{i=1}^N\in M
\end{aligned}\right..
$$

Numerically, we consider solving the nonlinear Burger's equation in \eqref{burger} on an one-dimensional curve $(\theta ,\sin \theta )$ which is embedded in $\mathbb{R}%
^{2}$\ with $0\leq \theta \leq 4\pi $. The Riemannian metric is given by $%
g=1+\cos ^{2}\theta $. The boundary conditions are $u=0$ at both boundaries $%
\theta =0$ and $\theta =4\pi $. We set the true solution to be $u(\theta
,t)=e^{-t}\sin (\theta )$, so that the manufactured forcing term can be
calculated as
\begin{equation*}
f:=u_{t}-\left( u\cdot \nabla _{g}u+\Delta _{g}u\right) =u_{t}-uu_{\theta }-%
\frac{\cos \theta \sin \theta }{(1+\cos ^{2}\theta )^{2}}u_{\theta }-\frac{1%
}{1+\cos ^{2}\theta }u_{\theta \theta }.
\end{equation*}%
We numerically check the accuracy of the GPDM and compare it with the error of the standard DM. In particular, the DM solution is obtained by the same pseudo-spectral method described above, except that the matrices $\mathbf{L}_0$ and $\mathbf{L}_2$ are constructed using the standard diffusion maps asymptotic (without ghost points correction). Effectively, the DM solution does not account for the Dirichlet boundary conditions.

In this numerical test, the grid points $\left\{ \theta
_{i}\right\} _{i=1,\ldots ,N}$ are uniformly distributed on $[0,4\pi ]$ using various $N$ for convergence examination. Here, we apply the spectral method using $K=160$ eigenvectors. In Fig. \ref{Fig6_burger1d}(a) and (b), we compare the absolute error of DM
solution with that of GPDM solution for $N=800$, integrated with $\Delta t = 0.001$. To apply DM and GPDM, we set $k\geq 100$ for each $N$ and employ
 the auto-tuned kernel bandwidth to specify the bandwidth parameter, $\epsilon$. Figure \ref%
{Fig6_burger1d}(c) shows the convergence of both solutions at time $t=0.005$ with time step $\Delta t=10^{-4}$. From these numerical results, one can clearly observe that the error of GPDM is smaller compared to the error of DM. This is not surprising as DM ignores the effect of the Dirichlet boundary conditions.

\begin{figure*}[tbp]
{\scriptsize \centering
\begin{tabular}{ccc}
{\normalsize (a) Error of DM} & {\normalsize (b) Error of GPDM} &
{\normalsize (c) $\ell^2$ norm error} \\
\includegraphics[width=0.3\linewidth]{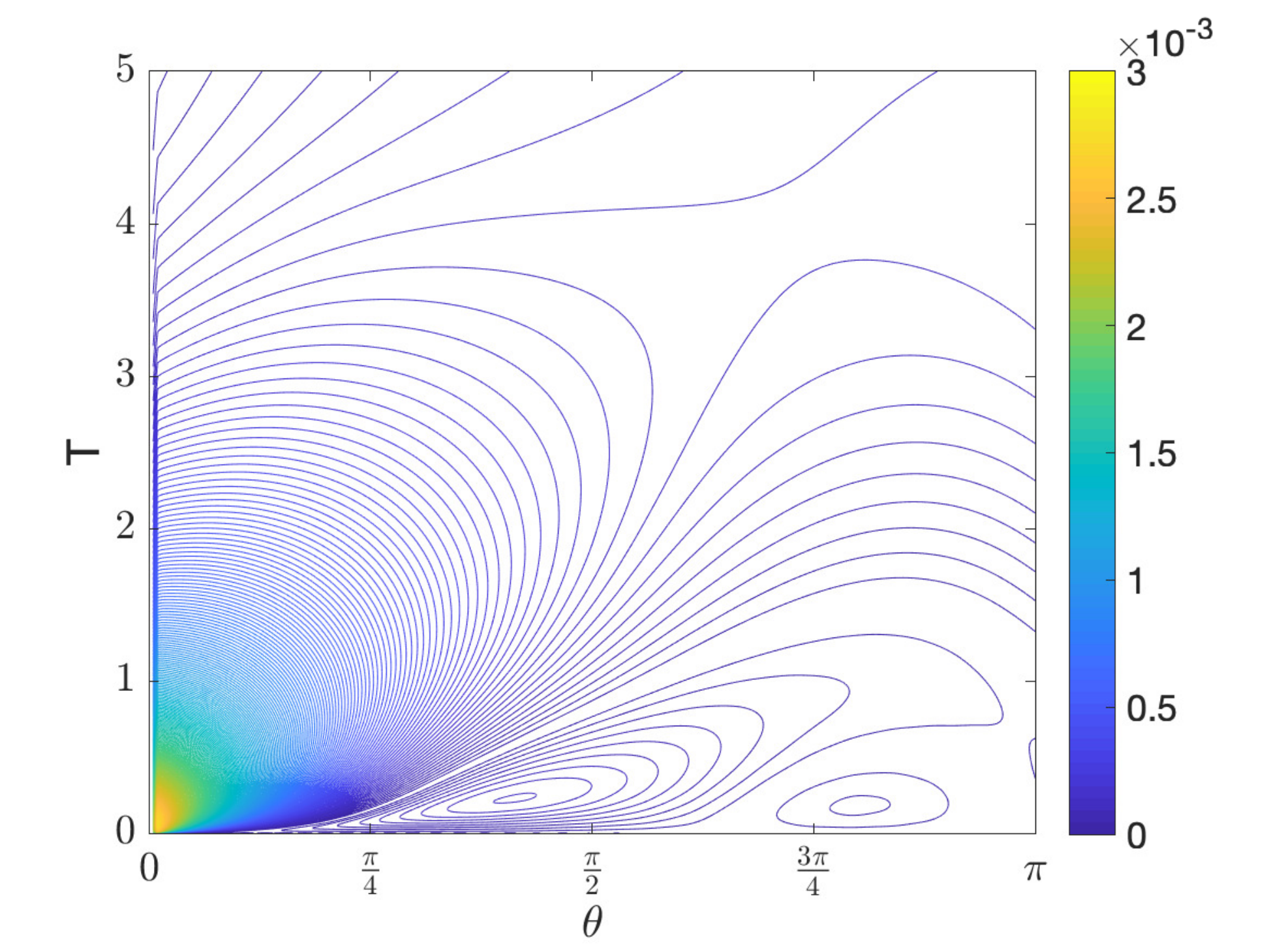} & %
\includegraphics[width=0.3\linewidth]{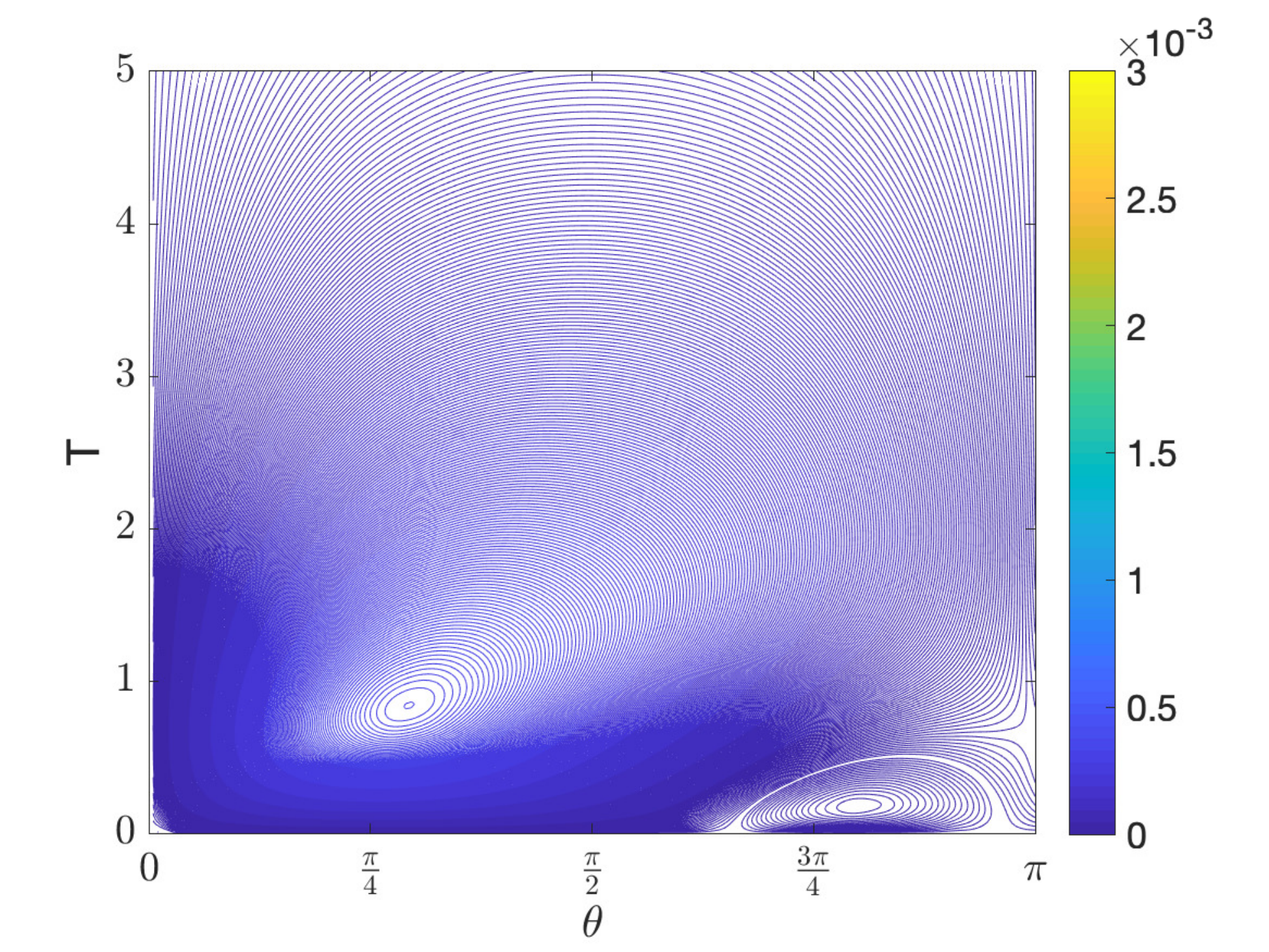} & %
\includegraphics[width=0.3\linewidth]{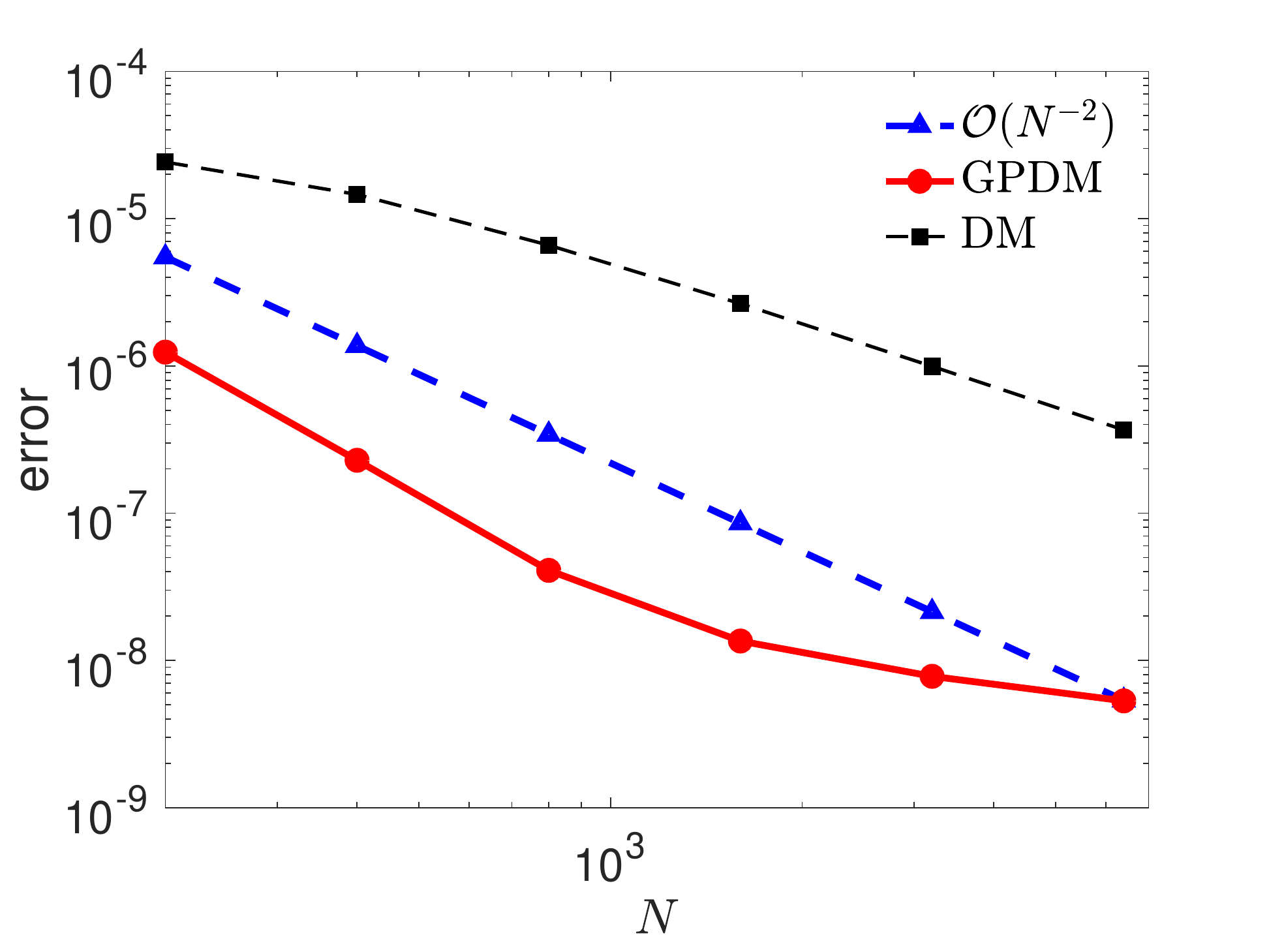}%
\end{tabular}
}
\caption{ Burger's equation: Absolute errors in space and time ($N=800,\Delta t=0.001,K=160$):
(a) DM for gradient operator and eigenvectors. (b) GPDM for gradient
operator and eigenvectors. (c) Comparison of the $\ell^2$-norm
errors, $\|\mathbf{U}^{50}_M -\mathbf{u}^{50}_M\|_{\ell^2}$, for $t=0.005,\Delta t=0.0001$ using $K=160$ eigenvectors. }
\label{Fig6_burger1d}
\end{figure*}

\section{Summary}\label{section6}
\par In this paper, we developed a mesh-free solver to solve the advection-diffusion parabolic-type PDE \eqref{Eqn:utbf} on smooth manifolds with Neumann, Dirichlet, and no boundary conditions. We analyzed the proposed method and showed that it is a convergent method under appropriate assumptions of the PDE problems. When the PDE is defined on a manifold without boundary, the uniform convergence rate of the proposed method is given in terms of the number of grid points and the kernel bandwidth parameter. When the manifold has boundaries, we proved the convergence in $\Vert\cdot\Vert_{\ell^2}$ for well-sampled data and in the uniform sense for randomly sampled data. While the convergence topology seems to be stronger for the randomly sampled data, this convergence is achieved by imposing a bandwidth parameter $\epsilon$ that decays at a much slower error rate as a function of $N$, compared to what we empirically found for well-sampled data. In particular, for 2-dimensional manifold problems, we empirically found that $\epsilon\sim N^{-1}$ for well-sampled data in $\ell^2$-sense and $\epsilon\sim N^{-1/5}$ in the uniform sense, which is slightly slower than the theory $N^{-1/4}$. We should also point out that the rate $N^{-1/4}$, deduced by balancing the two error terms in \eqref{Eqn:cbc}, is obtained for the Monte-Carlo error estimate. For well-sampled data, we suspect that we can also achieve uniform error with a slower convergence rate with a carefully chosen, $\epsilon$. Theoretically, however, since the secant line approximation is used to estimate the normal vectors along the boundary, it remains difficult to justify choosing $h$ to be other than $\epsilon^{1/2}$ when the interior points are well aligned in such a very special arrangement.

Numerically, we verified the effectiveness of the proposed method on some simple manifolds with well-sampled and randomly sampled grid points, and also on an unknown ``cow'' manifold. In these, examples, we showed the accuracy of the solvers and validate the convergence rates deduced in the theoretical study. We also demonstrated a numerical experiment of solving nonlinear viscous Burger's equation. While the result is promising, more works need to be done to extend this approach to solve vector-valued PDEs on manifolds. In particular, one possibly needs to extend diffusion maps to approximate Laplacian on 1-form, using various tools such as the Spectral Exterior Calculus  \cite{berry2020spectral}. More importantly, one needs to estimate the covariant derivative as a generalization of the nonlinear advection term on manifolds.

\section*{Acknowledgment}

The research of JH was partially supported under the NSF grant DMS-1854299. This research was supported in part by a Seed Grant award from the Institute for Computational and Data Sciences at the Pennsylvania State University. The authors also thank Faheem Gilani for providing an initial sample code of FELICITY FEM.

%
%

\appendix

\section{Kernel method for approximating normal vectors at the boundary}\label{App:A}

Assume that randomly sampled point clouds, $\{x_{i}\}_{i=1}^{N}$, lie on a $%
d$-dimensional manifold ${M\subseteq }\mathbb{R}^{m}${. Among these }${N}${%
\ data points, }${B}${\ boundary points, $\{x_{b}\}_{b=1}^{B}$, lie on the }$%
{(d-1)}${-dimensional boundary }$\partial {M}${. Our goal here is to
approximate normal vectors at these boundary points using} a kernel-based
weighted linear regression method as introduced in Corollary 3.2. of \cite%
{Berry2016IDM}.

\begin{algm}
\label{algm5_1} Kernel method for approximating normal vectors:

\begin{enumerate}
\item For a boundary point $x\in ${$\{x_{b}\}_{b=1}^{B}\subseteq \partial M$}%
, define $\mathbf{X}$ to be the $m\times k$ matrix with each column $\mathbf{%
X}_{j}=D(x)^{-1/2}\exp (-\Vert x-x_{j}\Vert ^{2}/4\epsilon )(x_{j}-x)$ where
$D(x)=\sum_{j=1}^{k}\exp \left( -\Vert x-x_{j}\Vert ^{2}/2\epsilon \right) $%
. Here, $x_{j}$ ($j=1,\ldots ,k$) are the $k>d$ nearest neighbors of the
boundary point $x$ chosen from the {$\{x_{i}\}_{i=1}^{N}\subseteq M$.}\ {The
bandwidth $\epsilon $ is specified using these }${k}${\ neighbors based on
the automated tuned method as in (\ref{scalingS}).}

\item Compute the left-singular vectors of $\mathbf{X}$\ using the singular
value decomposition (SVD) to obtain {$\boldsymbol{\tilde{t}}_{1},\boldsymbol{%
\tilde{t}}_{2},\ldots ,\boldsymbol{\tilde{t}}_{d}\in $}$\mathbb{R}^{m}${\
that span the tangent space of }${M}${\ at each boundary point.} Here, the
leading largest $d$ singular values of matrix $\mathbf{X}$ will be of order-$%
\sqrt{\epsilon }$ with the associated left-singular vectors {$\boldsymbol{%
\tilde{t}}_{1},\boldsymbol{\tilde{t}}_{2},\ldots ,\boldsymbol{\tilde{t}}_{d}$%
}\ parallel to the tangent space of $M$. Thus, {the error estimates of the
leading singular vectors $\boldsymbol{\tilde{t}}_{1},\boldsymbol{\tilde{t}}%
_{2},\ldots ,\boldsymbol{\tilde{t}}_{d}$ for approximating the tangent
vectors of }${M}${\ at boundary point }${x}${\ are also of order-$\sqrt{%
\epsilon }$ (see Appendix A of \cite{Berry2016IDM} for detailed discussion).
}The remaining $\min \{m,k\}-d$ smaller singular values will be of order-$%
\epsilon $ with the left-singular vectors orthogonal to the tangent space of
$M$.

\item Repeat Steps 1 and 2 to obtain{\ the }${(d-1)}${\ tangent vectors, $%
\boldsymbol{\tilde{s}}_{1},\ldots ,\boldsymbol{\tilde{s}}_{d-1}$, }%
associated with the leading largest $(d-1)$ singular values\ but using $k$
nearest neighbors of $x$ chosen from the {$\{x_{b}\}_{b=1}^{B}\subseteq
\partial M$ for each }$x\in ${$\partial M$}. These ${(d-1)}${\ tangent
vectors are approximated to span the }${(d-1)}${\ dimensional boundary $%
\partial M$. The bandwidth $\epsilon _{0}$ is also specified using these }${k%
}${\ nearest neighbors from $\partial M$.}

\item Calculate the normal direction $\boldsymbol{\tilde{\nu}}$ by
subtracting the orthonormal projection of $\boldsymbol{\tilde{t}}_{p}$\ onto
$Span\{\boldsymbol{{\boldsymbol{\tilde{s}}}}_{1}\boldsymbol{{,\ldots ,%
\boldsymbol{\tilde{s}}}}_{d-1}\}$\ from the tangent vector $\boldsymbol{%
\tilde{t}}_{p}$\ for some $p\in \{1,\ldots ,d\}$\ using the Gram--Schmidt
process or QR decomposition,
\begin{equation}
\boldsymbol{\tilde{\nu}}=\frac{\boldsymbol{\tilde{t}}_{p}-\sum_{i=1}^{d-1}%
\left\langle \boldsymbol{\tilde{t}}_{p},\boldsymbol{\tilde{s}}%
_{i}\right\rangle \boldsymbol{\tilde{s}}_{i}}{\Vert \boldsymbol{\tilde{t}}%
_{p}-\sum_{i=1}^{d-1}\left\langle \boldsymbol{\tilde{t}}_{p},\boldsymbol{%
\tilde{s}}_{i}\right\rangle \boldsymbol{\tilde{s}}_{i}\Vert },  \notag
\end{equation}%
where $\left\langle \boldsymbol{a},\boldsymbol{b}\right\rangle $ denotes the
inner product between vectors $\boldsymbol{a},\boldsymbol{b}\in \mathbb{R}%
^{m}$ and all tangent vectors $\{{\boldsymbol{\tilde{t}}_{1},\boldsymbol{%
\tilde{t}}_{2},\ldots ,\boldsymbol{\tilde{t}}_{d}}\}$ and $\{\boldsymbol{{%
\boldsymbol{\tilde{s}}}}_{1}\boldsymbol{{,\ldots ,\boldsymbol{\tilde{s}}}}%
_{d-1}\}$ are unit vectors generated from SVD algorithm. The error estimate
for the normal direction $\boldsymbol{\tilde{\nu}}$ is thereafter $\mathcal{O%
}(\sqrt{\epsilon },\sqrt{\epsilon _{0}})$, that is, $\Vert \boldsymbol{\nu }-%
\boldsymbol{\tilde{\nu}}\Vert =O(\sqrt{\epsilon },\sqrt{\epsilon
_{0}})$. {Here, basically }$\boldsymbol{\tilde{\nu}}${\ is a vector in the
tangent space of }${M}${, }${Span}\{{\boldsymbol{\tilde{t}}_{1},\boldsymbol{%
\tilde{t}}_{2},\ldots ,\boldsymbol{\tilde{t}}_{d}}\}$, {but also
perpendicular to the boundary }$Span\{\boldsymbol{{\boldsymbol{\tilde{s}}}}%
_{1}\boldsymbol{{,\ldots ,\boldsymbol{\tilde{s}}}}_{d-1}\}${\ as well. }

\item {Determine the sign of $\boldsymbol{\tilde{\nu}}$ from the orientation
of the manifold $M$ }by comparing {$\boldsymbol{\tilde{\nu}}$}\ with the
mean of vectors connecting $x$ and its $k$-nearest neighbors.
\end{enumerate}
\end{algm}

\section{Proof of Lemma~\ref{lemstabNeubou}}
\label{App:B}
{\ First, we prove that }$\Vert (\mathbf{I}-\Delta t\mathbf{N})^{-1}\Vert
_{\infty }\leq 1$. To obtain this result, we need the following two
properties of the matrix $\mathbf{N}$:
\begin{enumerate}
\item[1)] The diagonal entries are negative {$N_{ii}<0$ and non-diagonal entries are
non-negative $N_{ij}\geq 0$ for $j\neq i$; }
\item[2)] The row sum of $\mathbf{N}$ is zero , that is,
$\sum _{j=1}^{N-B}N_{ij}=0$ for $%
i=1,\ldots ,N-B$.
\end{enumerate}
{For the convenience of discussion, we define an }$(N-B)\times \bar{N}${\
sub-matrix of }$\mathbf{L}^{h}$ in {(\ref{eqnLtild}) for its }${N-B}${\ rows
corresponding to interior points,
\begin{equation}
(\mathbf{L}^{h})_{(N-B)\times \bar{N}}:=\left( \mathbf{L}^{I,I},\mathbf{L}%
^{I,B},\mathbf{L}^{I,G}\right) \in (\mathbb{R}^{(N-B)\times (N-B)},\mathbb{R}%
^{(N-B)\times B},\mathbb{R}^{(N-B)\times BK})=\mathbb{R}^{(N-B)\times \bar{N}%
},  \label{eqnLh2}
\end{equation}%
where each column of $\mathbf{L}^{I,I},\mathbf{L}^{I,B},$}$\mathbf{L}^{I,G}${%
\ corresponds to an interior point, a boundary point, and an exterior ghost
point, respectively. We also separate matrix }$\mathbf{G}\in \mathbb{R}^{{B}{%
K}\times N}$ in {(\ref{GPDMmatrix}) into two parts,}%
\begin{equation*}
\mathbf{G:=(G}^{G,I},\mathbf{G}^{G,B}\mathbf{)}\in (\mathbb{R}^{BK\times
(N-B)},\mathbb{R}^{BK\times B}),
\end{equation*}%
where {each column of }$\mathbf{G}^{G,I}$ and $\mathbf{G}^{G,B}${\
corresponds to an interior point and a boundary point, respectively. From the
extrapolation formula (\ref{Eqn:uvv_g2}), we have
\begin{equation*}
U_{b,k}=\left( k+1\right) u(x_{b})-ku(\tilde{x}_{b,0}).
\end{equation*}%
Then the submatrices }$\mathbf{G}^{G,I}$ and $\mathbf{G}^{G,B}$ can be
expressed as%
\begin{equation}
\mathbf{G}^{G,I}=\left(
\begin{array}{ccc}
& \vdots &  \\
\cdots & G_{\left\{ \left( b,k\right) ,(b,0)\right\} }=-k & \cdots \\
& \vdots &
\end{array}%
\right) _{BK\times (N-B)},\text{ \ \ }\mathbf{G}^{G,B}=\left(
\begin{array}{ccc}
& \vdots &  \\
\cdots & G_{\left\{ \left( b,k\right) ,b\right\} }=k+1 & \cdots \\
& \vdots &
\end{array}%
\right) _{BK\times B},  \label{EqnGGI}
\end{equation}%
where $G_{\left\{ \left( b,k\right) ,(b,0)\right\} }$ is the only nonzero
entry lies in the row of ghost $\tilde{x}_{b,k}$ and the column of interior $%
\tilde{x}_{b,0}$, $G_{\left\{ \left( b,k\right) ,b\right\} }$ is the only
nonzero entry lies in the row of ghost $\tilde{x}_{b,k}$ and the column of
boundary $x_{b}$. With these definitions, we can use {(\ref{GPDMmatrix}) }to
rewrite matrix ${\mathbf{N}}$,%
\begin{eqnarray}
{\mathbf{N}} &=&\mathbf{\tilde{L}}^{I,I}+\mathbf{\tilde{L}}^{I,B}\mathbf{E}%
^{B,I}=(\mathbf{L}^{I,I}+\mathbf{L}{^{I,G}\mathbf{G}^{G,I})}+(\mathbf{L}%
^{I,B}+\mathbf{L}^{I,G}\mathbf{G}^{G,B})\mathbf{E}^{B,I}  \notag \\
&=&\mathbf{L}^{I,I}+\mathbf{L}^{I,B}\mathbf{E}^{B,I}+\mathbf{L}{^{I,G}(%
\mathbf{G}^{G,I}+\mathbf{G}^{G,B}\mathbf{E}^{B,I}):=}\mathbf{L}^{I,I}+%
\mathbf{L}^{I,B}\mathbf{E}^{B,I}+\mathbf{L}{^{I,G}\mathbf{\tilde{G}}^{G,I},}
\label{defNN}
\end{eqnarray}%
where we have defined $\mathbf{\tilde{G}}^{G,I}:=
\mathbf{G}^{G,I}+\mathbf{G}^{G,B}\mathbf{E}^{B,I}$. Using the definitions of ${\mathbf{G}^{G,I}}$\ and ${\mathbf{G}^{G,B}}$\ in {%
(\ref{EqnGGI}) and }${\mathbf{E}^{B,I}}${\ in (\ref{NeumannD}), one can
calculate }${\mathbf{\tilde{G}}^{G,I}}${\ to obtain }%
\begin{equation*}
{\mathbf{\tilde{G}}^{G,I}=}\left(
\begin{array}{ccc}
& \vdots &  \\
\cdots & G_{\left\{ \left( b,k\right) ,(b,0)\right\} }=1 & \cdots \\
& \vdots &
\end{array}%
\right) _{BK\times (N-B)}.
\end{equation*}%
For ${\mathbf{\tilde{G}}^{G,I}}${, the only nonzero entry is }$G_{\left\{
\left( b,k\right) ,(b,0)\right\} }=1$ for $b=1,\ldots ,B$ and $k=1,\ldots ,K$%
. Thus, for $\mathbf{L}^{I,B}\mathbf{E}^{B,I}+\mathbf{L}{^{I,G}\mathbf{%
\tilde{G}}^{G,I}}$ in {(\ref{defNN}), all of its entris are non-negative, in
particular, all its diagonal entries are zero. Also notice that the diagonal
entries of }$\mathbf{L}^{I,I}$ are negative and all its non-diagonal entries
are non-negative. So far, {we have proved the first property, that is, for
all }${i=1,\ldots ,N-B}${, }$\mathbf{N}${\ has negative }diagonal {$N_{ii}<0$
and non-negative non-diagonal $N_{ij}\geq 0$ for $j\neq i$.}

We now verify the second property of $\mathbf{N}$\textbf{,} its every {row
sum is zero. Using the definition in (\ref{defNN}) and define all one vector
}$\mathbf{1}_{k}${\ with length }${k}${, one can show that }%
\begin{eqnarray}
\mathbf{N1}_{N-B} &=&(\mathbf{L}^{I,I}+\mathbf{L}^{I,B}\mathbf{E}^{B,I}+%
\mathbf{L}{^{I,G}\mathbf{\tilde{G}}^{G,I})}\mathbf{1}_{N-B} \notag\\
&=&\mathbf{L}^{I,I}\mathbf{1}_{N-B}+\mathbf{L}^{I,B}\mathbf{1}_{B}+\mathbf{L}%
{^{I,G}}\mathbf{1}_{BK}\mathbf{.} \notag \\
&=&\mathbf{0}, \label{eqnLrowsum}
\end{eqnarray}%
where we have used the fact that the row sum of $\mathbf{L}^{h}$\ in {(\ref{eqnLh2}) is
zero. Based on these two properties of }$\mathbf{N}${, one can immediately see that  $\mathbf{I}-\Delta t\mathbf{N}$ is strictly diagonally dominant (SDD) for any $\Delta t$.
Using the Ahlberg-Nilson-Varah bound for
SDD matrices, one obtains%
\begin{equation*}
\Vert (\mathbf{I}-\Delta t\mathbf{N})^{-1}\Vert _{\infty }\leq \frac{1}{%
\min_{i}(|1-\Delta tN_{ii}|-\Delta t\sum _{j\neq i}|N_{ij}|)}=\frac{1}{%
\min_{i}(1-\Delta t\sum _{j}N_{ij})}=1.
\end{equation*}%
}
This completes the first part of Lemma~\ref{lemstabNeubou}.

Next, we prove that $\Vert (\mathbf{I}-\Delta t\mathbf{N})^{-1}\Vert
_{2}\leq 1+C\Delta t$. To obtain this result, we need to show that $%
\mathbf{I}-\Delta t\mathbf{N}^{\top }$ is SDD for sufficiently large $N$ and
sufficiently small $\Delta t$. We can compute
\begin{eqnarray}
&&|1-\Delta tN_{ii}|-\Delta t\sum _{j\neq i}|N_{ji}|  \notag \\
&=&1-\Delta tN_{ii}-\Delta t\sum _{j\neq i}N_{ji}=1-\Delta t\sum
_{j}N_{ji}=1-\Delta t\sum _{j}N_{ij}+\Delta t(\sum _{j}N_{ij}-\sum
_{j}N_{ji})  \notag \\
&=&1+\Delta t[(\mathbf{L}^{I,I}+\mathbf{L}^{I,B}\mathbf{E}^{B,I}+\mathbf{L}{%
^{I,G}\mathbf{\tilde{G}}^{G,I})\mathbf{1}}_{N-B}-(\mathbf{L}^{I,I}+\mathbf{L}%
^{I,B}\mathbf{E}^{B,I}+\mathbf{L}{^{I,G}\mathbf{\tilde{G}}^{G,I})^{\top }%
\mathbf{1}}_{N-B}]_{i}  \notag \\
&=&1+\Delta t\frac{\epsilon ^{-d/2-1}}{m_{0}(N+BK)}[(\mathbf{K}^{I,I}+%
\mathbf{K}^{I,B}\mathbf{E}^{B,I}+\mathbf{K}^{I,G}{\mathbf{\tilde{G}}^{G,I}})%
\mathbf{1}-\left( \mathbf{K}^{I,I}+\mathbf{K}^{I,B}\mathbf{E}^{B,I}+\mathbf{K%
}^{I,G}{\mathbf{\tilde{G}}^{G,I}}\right) ^{\top }\mathbf{1]}_{i}  \notag \\
&=&1+\Delta t\frac{\epsilon ^{-d/2-1}}{m_{0}(N+BK)}\Big[\underbrace{\big(%
\mathbf{K}^{I,I}\mathbf{1-(K}^{I,I})^{\top }\mathbf{1}\big)}_{(I)}+%
\underbrace{\big(\mathbf{K}^{I,B}\mathbf{E}^{B,I}\mathbf{1-(K}^{I,B}\mathbf{E%
}^{B,I}\mathbf{)}^{\top }\mathbf{1}\big)}_{(II)}+\underbrace{\big(\mathbf{K}%
^{I,G}{\mathbf{\tilde{G}}^{G,I}}\mathbf{1}-(\mathbf{K}^{I,G}{\mathbf{\tilde{G%
}}^{G,I}})^{\top }\mathbf{1}\big)}_{(III)}\Big]_{i},  \label{transK}
\end{eqnarray}%
where $\mathbf{K}^{I,I},\mathbf{K}^{I,B},\mathbf{K}^{I,G}$ are the kernels
of $\mathbf{L}^{I,I},\mathbf{L}^{I,B},\mathbf{L}^{I,G}$ in {(\ref{eqnLh2})},
respectively, related to each other with respect to {(\ref{classicDMmatrix}%
). The second line follows from the two properties} 1) and 2) of $\mathbf{N%
}$, the third line follows from the calculation in {(\ref{defNN}), and the
fourth line follows from the construction of DM matrix in (\ref%
{classicDMmatrix}).}

\textbf{Bounding the term (I) in \eqref{transK}.} Let $H_{j}(x_{i}):=%
\epsilon ^{-d/2}K_{\epsilon }(x_{j},x_{i})$ such that,
\begin{equation*}
\mathbb{E}[H_{j}]=\epsilon ^{-d/2}\int_{M}K_{\epsilon
}(x_{j},y)dV_{y}=G_{\epsilon }1(x_{j})=m_0+\epsilon \omega (x_{j})+%
O(\epsilon ^{2}).
\end{equation*}%
It is easy to show that,
\begin{equation*}
\frac{1}{N}\sum_{i=1}^{N-B}H_{j}(x_{i})-\frac{1}{N-1}\sum_{i\neq
j}H_{j}(x_{i})=O((N-B)^{-1}\epsilon ^{-d/2},(N-B)^{-2}).
\end{equation*}%
Letting $\Big|\frac{1}{N-B-1}\sum_{i\neq j}H_{j}(x_{i})-\mathbb{E}[H_{j}]%
\Big|=O(\epsilon ^{2})$, we obtain,
\begin{equation*}
\Big|\frac{1}{N-B}\sum_{i=1}^{N-B}H_{j}(x_{i})-\mathbb{E}[H_{j}]\Big|=%
O(\epsilon ^{2},(N-B)^{-1}\epsilon ^{-d/2},(N-B)^{-2}).
\end{equation*}%
Balancing the first and second error bounds, the bias is given as $\epsilon =%
O((N-B)^{-\frac{2}{4+d}})$.

Let $Y_j(x_i) := H_j(x_i) - \mathbb{E}[H_j]$ and compute the variance,
\begin{eqnarray*}
\mbox{Var}\lbrack Y_j] = \mathbb{E}[H_j^2] - \mathbb{E}[H_j]^2 = \hat{m}_0
 \epsilon^{-d/2} - m_0^2 + O(\epsilon^{1-d/2}) = \hat{m}_0 \epsilon^{-d/2} + O(1),
\end{eqnarray*}
where $\hat{m}_0 = \int_{\mathbb{R}^d} \exp(-\frac{ |z-\sqrt{\epsilon} B(x)
|^2}{2})^2 dz$.

Using the Chernoff bound, we obtain
\begin{equation*}
P\Big(\Big|\frac{1}{N-B-1}\sum_{i\neq j}H_{j}(x_{i})-\mathbb{E}[H_{j}]\Big|%
>c\epsilon ^{2}\Big)=P\Big(\Big|\sum_{i\neq j}Y_{j}(x_{i})\Big|>c\epsilon
^{2}(N-B-1)\Big)\leq \exp \Big(-\frac{c^{2}\epsilon ^{4}(N-B-1)}{4\hat{m}_0%
\epsilon ^{-d/2}}\Big),
\end{equation*}%
where $c,\hat{m}_0=O(1)$. To have an order one exponent (this is
equivalent to balancing square of the bias and variance), we obtain $%
\epsilon =O(N^{-\frac{2}{8+d}})$ which is slower than the bias.

So far, we have shown that
\begin{equation*}
\frac{1}{N-B}\sum_{i=1}^{N-B}H_{j}(x_{i})=\mathbb{E}[H_{j}]+O%
(\epsilon ^{2})=m_0+\epsilon \omega (x_{j})+O(\epsilon
^{2})=m_0+\epsilon \omega (x_{j})+O((N-B)^{-\frac{4}{8+d}})
\end{equation*}%
as $N-B\rightarrow \infty $ in high probability. Repeating the same argument
with $H_{j}^{\ast }(x_{i}):=\epsilon ^{-d/2}K_{\epsilon }(x_{j},x_{i})^{\top
}$, one can conclude also that,
\begin{equation*}
\frac{1}{N-B}\sum_{i=1}^{N-B}H_{j}^{\ast }(x_{i})=m_0+\epsilon \omega
(x_{j})-\epsilon m_0 \mbox{div}b(x_{j})+O((N-B)^{-\frac{4}{8+d}})
\end{equation*}%
Based on these two expansions, we have,
\begin{equation*}
\frac{\epsilon ^{-d/2-1}}{2m_{0}(N-B)}\big((\mathbf{K}^{I,I}\mathbf{)}^{\top
}\mathbf{1}-\mathbf{K}^{I,I}\mathbf{1}\big)_{j}=\frac{\epsilon ^{-1}}{2m_0(N-B)}%
\sum_{i=1}^{N-B}\Big(H_{j}^{\ast }(x_{i})-H_{j}(x_{i})\Big)=-\frac{1}{2}%
\mbox{div}b(x_{j})+O((N-B)^{-\frac{2}{8+d}}).
\end{equation*}%
Thus, the first term can be bounded
by
\begin{equation*}
\frac{\epsilon ^{-d/2-1}}{m_{0}(N+BK)}\left\vert (\mathbf{K}^{I,I}\mathbf{%
1-(K}^{I,I})^{\top }\mathbf{1)}_{i}\right\vert =\frac{N-B}{N+BK}\left\vert %
\mbox{div}b(x_{i})\right\vert +O(N^{-\frac{2}{8+d}})=O(1),
\end{equation*}%
for sufficiently large ${N}$.

\textbf{Bounding the term (II) in \eqref{transK}.} For the second term, $%
\mathbf{K}^{I,B}\in \mathbb{R}^{(N-B)\times B}$ and $\mathbf{E}^{B,I}\in
\mathbb{R}^{B\times (N-B)}$ is a matrix whose $b$th row equals to one on the
column corresponding to the $\tilde{x}_{b,0}$ and zero everywhere else.
Then, we have
\begin{equation*}
(\mathbf{K}^{I,B}\mathbf{E1)}_{i}\mathbf{=(K}^{I,B}\mathbf{1)}_{i}\leq
B=\left\{
\begin{array}{cc}
O(1), & d=1 \\
O(\sqrt{N}), & d=2%
\end{array}%
\right. .
\end{equation*}%
and%
\begin{equation*}
(\mathbf{(K}^{I,B}\mathbf{E)}^{\top }\mathbf{1)}_{i}\mathbf{=((\mathbf{E}}%
^{B,I}\mathbf{\mathbf{)}^{\top }(K}^{I,B})^{\top }\mathbf{1)}_{i}\leq R((%
\mathbf{E}^{B,I}\mathbf{)^{\top }1)}_{i}\leq R=\left\{
\begin{array}{cc}
O(\sqrt{N}), & d=1 \\
O(\sqrt{N}), & d=2%
\end{array}%
\right. ,
\end{equation*}%
where we have used the assumption that each boundary point has at most $R=O(\sqrt{N})$
neighboring interior points for both $d=1$ and $d=2$. Thus, for second term,
\begin{equation*}
\frac{\epsilon ^{-d/2-1}}{m_{0}(N+BK)}\left\vert (\mathbf{K}^{I,B}\mathbf{E}%
^{B,I}\mathbf{1-(K}^{I,B}\mathbf{E}^{B,I}\mathbf{)}^{\top }\mathbf{1)}%
_{i}\right\vert \leq \frac{\epsilon ^{-d/2-1}\left( B+R\right) }{m_{0}(N+BK)}%
=\left\{
\begin{array}{cc}
O(N^{-1/14}), & d=1 \\
O(1), & d=2%
\end{array}%
\right. ,
\end{equation*}%
where we have used,
\begin{equation*}
\epsilon \sim N^{-\frac{2}{d+6}}=
\begin{cases}
N^{-2/7}, & d=1 \\
N^{-1/4}, & d=2
\end{cases},
\end{equation*}
obtained by balancing the two error terms in \eqref{Eqn:cbc}.

\textbf{Bounding the term (III) in \eqref{transK}.} For the third term, $%
\mathbf{K}^{I,G}\in \mathbb{R}^{(N-B)\times BK}$ and ${\mathbf{\tilde{G}}%
^{G,I}}\in \mathbb{R}^{BK\times (N-B)}$ is a matrix whose row corresponding
to $\tilde{x}_{b,k}$\ equals to one on the column corresponding to the $%
\tilde{x}_{b,0}$ and zero everywhere else. Following the same argument
above, we can show that,
\begin{eqnarray*}
(\mathbf{K}^{I,G}{\mathbf{\tilde{G}}^{G,I}}\mathbf{1)}_{i} &\mathbf{=}&%
\mathbf{(K}^{I,G}\mathbf{1)}_{i}\leq BK, \\
((\mathbf{K}^{I,G}{\mathbf{\tilde{G}}^{G,I}})^{\top }\mathbf{1)}_{i} &%
\mathbf{=}&\mathbf{(({\mathbf{\tilde{G}}}}^{G,I}\mathbf{{)}^{\top }(K}%
^{I,G})^{\top }\mathbf{1)}_{i}\leq R(\mathbf{({\mathbf{\tilde{G}}}}^{G,I}%
\mathbf{{)}^{\top }1)}_{i}\leq RK.
\end{eqnarray*}%
The third term can be bounded by,%
\begin{equation*}
\frac{\epsilon ^{-d/2-1}}{m_{0}(N+BK)}\left\vert (\mathbf{K}^{I,G}{\mathbf{%
\tilde{G}}^{G,I}}\mathbf{1}-(\mathbf{K}^{I,G}{\mathbf{\tilde{G}}^{G,I}}%
)^{\top }\mathbf{1)}_{i}\right\vert \leq \frac{\epsilon ^{-d/2-1}\left(
B+R\right) K}{m_{0}(N+BK)}=\left\{
\begin{array}{cc}
O(N^{-1/14}), & d=1 \\
O(1), & d=2%
\end{array}%
\right. .
\end{equation*}

To summarize, there exists a constant $C_{0}$ such that
\begin{equation}
\left\vert \frac{\epsilon ^{-d/2-1}}{m_{0}(N+BK)}[(\mathbf{K}^{I,I}+\mathbf{K%
}^{I,B}\mathbf{E}^{B,I}+\mathbf{K}^{I,G}{\mathbf{\tilde{G}}^{G,I}})\mathbf{1}%
-\left( \mathbf{K}^{I,I}+\mathbf{K}^{I,B}\mathbf{E}^{B,I}+\mathbf{K}^{I,G}{%
\mathbf{\tilde{G}}^{G,I}}\right) ^{\top }\mathbf{1]}_{i}\right\vert \leq
C_{0}, \label{eqnC0e}
\end{equation}%
for sufficiently large $N$. From {(\ref{transK}), we have }%
\begin{equation*}
|1-\Delta tN_{ii}|-\Delta t\sum _{j\neq i}|N_{ji}|\geq 1-C_{0}\Delta t>0,
\end{equation*}%
for sufficiently small $\Delta t$, which means that $\mathbf{I}-\Delta t%
\mathbf{N}^{\top }$ is a SDD matrix. {Using the Ahlberg-Nilson-Varah bound,
we have}
\begin{equation*}
\Vert (\mathbf{I}-\Delta t\mathbf{N})^{-1}\Vert _{1}=\Vert (\mathbf{I}%
-\Delta t\mathbf{N}^{\top })^{-1}\Vert _{\infty }\leq \frac{1}{1-C_{0}\Delta
t}.
\end{equation*}%
Therefore, there exists a constant $C$ such that the spectral norm can be
bounded by
\begin{equation}
\Vert (\mathbf{I}-\Delta t\mathbf{N})^{-1}\Vert _{2}\leq \left( \Vert (%
\mathbf{I}-\Delta t\mathbf{N})^{-1}\Vert _{\infty }\Vert (\mathbf{I}-\Delta t%
\mathbf{N})^{-1}\Vert _{1}\right) ^{1/2}\leq (\frac{1}{1-C_{0}\Delta t}%
)^{1/2}\leq 1+C\Delta t.  \label{eqn:stbN}
\end{equation}

\section{Proof of Lemma~\protect\ref{lemstabDir}}

\label{App:C}

For the stability of the Dirichlet problem \eqref{schemedirichlet},
basically one can follow similar steps as in the above Neumann case.
Following the calculation in \eqref{schemedirichlet}, we have
\begin{equation}
\mathbf{H=\tilde{L}}^{I,I}=\mathbf{L}^{I,I}+\mathbf{L}{^{I,G}\mathbf{G}%
^{G,I}.}  \label{eqnHLii}
\end{equation}%
Now, we show that $\mathbf{I}-\Delta t\mathbf{H}$ is SDD for sufficiently
large $N$ and sufficiently small $\Delta t$. Denote the entries of $\mathbf{H%
}\ $and $\mathbf{L}^{I,I}$\ with $\mathbf{H}=(H_{ij})_{i,j=1}^{N-B}$ and $%
\mathbf{L}^{I,I}=(L_{ij}^{I,I})_{i,j=1}^{N-B}$, where $H_{ii}=L_{ii}^{I,I}<0$
and $L_{ij}^{I,I}>0$ for $j\neq i$\ by noticing the structure of matrices in %
\eqref{eqnHLii}. We can compute
\begin{eqnarray}
&&|1-\Delta tH_{ii}|-\Delta t\sum_{j\neq i}|H_{ij}|=|1-\Delta
tL_{ii}^{I,I}|-\Delta t\sum_{j\neq i}|L_{ij}^{I,I}+(\mathbf{L}{^{I,G}\mathbf{%
G}^{G,I})}_{ij}|\geq |1-\Delta tL_{ii}^{I,I}|-\Delta t\sum_{j\neq
i}(|L_{ij}^{I,I}|+|(\mathbf{L}{^{I,G}\mathbf{G}^{G,I})}_{ij}|)  \notag \\
&=&1-\Delta tL_{ii}^{I,I}-\Delta t\sum_{j\neq i}L_{ij}^{I,I}-\Delta
t\sum_{j\neq i}|(\mathbf{L}{^{I,G}\mathbf{G}^{G,I})}_{ij}|=1-\Delta
t\sum_{j=1}^{N-B}L_{ij}^{I,I}-\Delta t\sum_{j\neq i}|(\mathbf{L}{^{I,G}%
\mathbf{G}^{G,I})}_{ij}|  \notag \\
&\geq &1-\Delta t\sum_{j\neq i}|(\mathbf{L}{^{I,G}\mathbf{G}^{G,I})}%
_{ij}|=1-\Delta t\frac{\epsilon ^{-d/2-1}}{m_{0}(N+BK)}\sum_{j\neq i}|(%
\mathbf{K}{^{I,G}\mathbf{G}^{G,I})}_{ij}|,  \label{eqnsddH}
\end{eqnarray}%
where in the last line, the equality follows from the definition of $\mathbf{%
L}$ in \eqref{classicDMmatrix} and the last inequality follows from the zero
row sum property of $\mathbf{L}$\ in \eqref{eqnLrowsum}, that is, $%
\sum_{j=1}^{N-B}L_{ij}^{I,I}=-\sum_{j=N-B+1}^{N+BK}L_{ij}^{I,I}<0$. By
noticing all entries of $\mathbf{K}{^{I,G}}$\ are nonnegative and all
entries of ${\mathbf{G}^{G,I}}$\ are nonpositive as in \eqref{EqnGGI}, we
can show that
\begin{equation*}
\sum_{j\neq i}|(\mathbf{K}{^{I,G}\mathbf{G}^{G,I})}_{ij}|=-(\mathbf{K}^{I,G}{%
\mathbf{G}^{G,I}}\mathbf{1)}_{i}\mathbf{\leq }K\mathbf{(K}^{I,G}\mathbf{1)}%
_{i}\leq BK^{2}.
\end{equation*}%
Following the same argument in \eqref{eqnC0e}, there exists a constant $C_{0}
$ such that
\begin{equation*}
\frac{\epsilon ^{-d/2-1}}{m_{0}(N+BK)}\sum_{j\neq i}|(\mathbf{K}{^{I,G}%
\mathbf{G}^{G,I})}_{ij}|\leq C_{0}.
\end{equation*}%
Thus, from \eqref{eqnsddH}, {we obtain that }$\mathbf{I}-\Delta t\mathbf{H}$
is SDD for sufficiently large $N$ and sufficiently small $\Delta t$,{\ }%
\begin{equation*}
|1-\Delta tH_{ii}|-\Delta t\sum_{j\neq i}|H_{ij}|\geq 1-C_{0}\Delta t>0.
\end{equation*}%
{Using the Ahlberg-Nilson-Varah bound, we have}
\begin{equation*}
\Vert (\mathbf{I}-\Delta t\mathbf{H})^{-1}\Vert _{\infty }\leq \frac{1}{%
\min_{i}(|1-\Delta tH_{ii}|-\Delta t\sum_{j\neq i}|H_{ij}|)}\leq \frac{1}{%
1-C_{0}\Delta t}\leq 1+C_{1}\Delta t.
\end{equation*}%
Following almost the same steps as in the above Neumann case, we can show
that \ $\Vert (\mathbf{I}-\Delta t\mathbf{H})^{-1}\Vert _{2}\leq
1+C_{2}\Delta t$.


\end{document}